\declaretheorem{theorem}
\declaretheorem{corollary}
\declaretheorem{lemma}
\declaretheorem{proposition}
\declaretheorem{observation}
\declaretheorem{fact}
\declaretheoremstyle[qed=$\square$]{definitionwithend}
\declaretheorem[style=definitionwithend]{definition}
\declaretheorem[style=definitionwithend]{assumption}
\declaretheorem[style=definitionwithend]{example}
\declaretheorem[style=definitionwithend]{remark}
\crefname{fact}{Fact}{Facts}
\crefname{algorithm}{Algorithm}{Algorithms}
\crefname{assumption}{Assumption}{Assumptions}
\definecolor{gold}{rgb}{0.85,0.65,0}
\newcommand{\abs}[1]{\ensuremath{\left\lvert #1 \right\rvert}}
\newcommand{\by}{\times}
\newcommand{\norm}[1]{\ensuremath{\left\lVert #1 \right\rVert}}
\newcommand{\ip}[1]{\ensuremath{\left\langle #1 \right\rangle}}
\newcommand{\grad}{\ensuremath{\nabla}}
\let\emptyset\varnothing
\newcommand{\set}[1]{\left\{#1\right\}}
\newcommand{\bb}{\mathbb}
\def\N{{\mathbb{N}}}
\def\R{{\mathbb{R}}}
\def\S{{\mathbb{S}}}
\def\bS{{\mathbf{S}}}
\def\cB{{\cal B}}
\def\cF{{\cal F}}
\def\cG{{\cal G}}
\def\cM{{\cal M}}
\def\cN{{\cal N}}
\def\cR{{\cal R}}
\def\cS{{\cal S}}
\DeclareMathOperator{\Opt}{Opt}
\DeclareMathOperator*{\argmin}{arg\,min}
\DeclareMathOperator*{\argmax}{arg\,max}
\DeclareMathOperator{\Diag}{Diag}
\DeclareMathOperator{\tr}{tr}
\DeclareMathOperator*{\E}{\mathbb{E}}
\DeclareMathOperator{\spann}{span}
\DeclareMathOperator{\inter}{int}
\DeclareMathOperator{\rint}{rint}
\DeclareMathOperator{\bd}{bd}
\DeclareMathOperator{\conv}{conv}
\DeclareMathOperator{\cone}{cone}
\DeclareMathOperator{\clconv}{clconv}
\DeclareMathOperator{\clcone}{clcone}
\newcommand{\faceeq}{\trianglelefteq}
\newcommand{\SDP}{\textup{SDP}}
\newcommand{\obj}{{\textup{obj}}}
\newcommand{\aas}{\textit{a.a.s.}}
\newcommand{\ngoe}{\textup{NGOE}}
\newcommand{\nsc}{\textup{nsc}}
\newcommand{\mathprog}[1]{}
\begin{document}

\title{A geometric view of SDP exactness in QCQPs and its applications}
\author[1]{Alex L.\ Wang}
\author[1]{Fatma K\i l\i n\c{c}-Karzan}
\affil[1]{Carnegie Mellon University, Pittsburgh, PA, 15213, USA.}
\date{\today}

\maketitle

\begin{abstract}
Quadratically constrained quadratic programs (QCQPs) are a highly expressive class of nonconvex optimization problems. While QCQPs are NP-hard in general, they admit a natural convex relaxation via the standard (Shor) semidefinite program (SDP) relaxation. Towards understanding when this relaxation is exact, we study general QCQPs and their (projected) SDP relaxations. We present sufficient (and in some cases, also necessary) conditions for objective value exactness (the condition that the objective values of the QCQP and its SDP relaxation coincide) and convex hull exactness (the condition that the convex hull of the QCQP epigraph coincides with the epigraph of its SDP relaxation). Our conditions for exactness are based on geometric properties of $\Gamma$, the cone of convex Lagrange multipliers, and its relatives $\Gamma_P$ and $\Gamma^\circ$. These tools form the basis of our main message: questions of exactness can be treated systematically whenever $\Gamma$, $\Gamma_P$, or $\Gamma^\circ$ is well-understood. As further evidence of this message, we apply our tools to address questions of exactness for a prototypical QCQP involving a binary on-off constraint, quadratic matrix programs, the QCQP formulation of the partition problem, and random and semi-random QCQPs. \end{abstract}

\section{Introduction}
\label{sec:intro}

Quadratically constrained quadratic programs (QCQPs) are a fundamental class of \emph{nonconvex} optimization problems of the form
\begin{align*}
\Opt\coloneqq \inf_{x\in\R^n}\set{q_\obj(x):\, \begin{array}
	{l}
	q_i(x) \leq 0,\,\forall i\in[m_I]\\
	q_i(x) = 0,\,\forall i\in[m_I+1,m]
\end{array}},
\end{align*}
where $q_\obj, q_1,\dots,q_m:\R^n\to\R$ are each (possibly nonconvex) quadratic functions.
For each $i\in[m]$, we will write $q_i(x) = x^\top A_i x + 2b_i^\top x + c_i$ for $A_i\in\S^n$, $b_i\in\R^n$, and $c_i\in\R$. Similarly, write $q_\obj(x) = x^\top A_\obj x + 2b_\obj^\top x + c_\obj$.

These optimization problems arise naturally in a variety of application areas (see \cite{wang2020tightness,bao2011semidefinite,benTal2001lectures}).
Indeed, one fundamental reason for the ubiquity of QCQPs is their expressiveness---any polynomial optimization problem or $\set{0,1}$-integer program may be reformulated as a QCQP.

Although QCQPs are NP-hard in general, they admit a natural tractable convex relaxation known as the standard semidefinite program (SDP) relaxation \cite{shor1990dual},
\begin{align*}
\Opt_\SDP\coloneqq \inf_{x\in\R^n}\set{\ip{A_\obj, X} + 2b_\obj^\top x + c_\obj:\, \begin{array}
	{l}
	\exists X\succeq xx^\top\,:\\
	\ip{A_i,X} + 2b_i^\top x + c_i \leq0,\,\forall i\in[m_I]\\
	\ip{A_i, X} + 2b_i^\top x + c_i = 0 ,\,\forall i\in[m_I + 1,m]
\end{array}}.
\end{align*}
This relaxation is also referred to as the Shor SDP relaxation.
In contrast to the vast literature on the \emph{approximation} quality of this relaxation~\cite{benTal2001lectures,megretski2001relaxations,nesterov1997quality,ye1999approximating}, the question of when \emph{exactness} occurs in this relaxation is much more limited and recent.

One interesting line of work has offered deterministic conditions under which the SDP relaxation of a general QCQP is exact for various definitions of exactness.
In their celebrated paper, \citet{fradkov1979s-procedure} prove the S-lemma, which implies that the problem of minimizing an arbitrary quadratic objective function over the unit ball (or any single quadratic constraint)
can be solved via SDP techniques. Specifically, the S-lemma implies that \textit{objective value exactness}---the condition that the optimal value of the QCQP and the optimal value of its SDP relaxation coincide---holds for QCQPs with a single constraint; see also \cite{wang2020generalized}.
In contrast, \citet{burer2019exact} study diagonal QCQPs---those QCQPs for which $A_\obj,A_1,\dots,A_m$ are diagonal matrices---with a \emph{general} number of constraints and give sufficient conditions for objective value exactness.
\citet{wang2020tightness,wang2020convex} continue this line of work by developing a general framework for deriving sufficient conditions for both objective value exactness and \textit{convex hull exactness}---the condition that the convex hull of the QCQP epigraph coincides with the (projected) SDP epigraph---for QCQPs with a polyhedral set of projective convex Lagrange multipliers $\Gamma_P$ (see \cref{sec:prelim}).
Beyond being a natural sufficient condition for objective value exactness, convex hull exactness has its own far-reaching applications and motivation.
Such results find use for example in deriving strong relaxations of certain critical substructures in nonconvex problems. Specifically, the convexification of commonly occurring substructures in complex nonconvex problems has been critical in advancing the state-of-the-art computational approaches for general nonlinear nonconvex programs and mixed integer linear programs \cite{conforti2014integer,tawarmalani2002convexification}.
(See \cite{wang2020tightness,argue2020necessary} and references therein for additional work in this direction.)

While the framework presented by \citet{wang2020tightness} can at once cover and extend many existing results on objective value and convex hull exactness \cite{locatelli2015some,burer2019exact,fradkov1979s-procedure,hoNguyen2017second,wang2020generalized,modaresi2017convex,yildiran2009convex,burer2017how}, it is still quite limited. In particular, the assumption that the set of Lagrange multipliers $\Gamma_P$ is polyhedral is rarely satisfied outside of simultaneously diagonalizable QCQPs and precludes the results in \cite{wang2020tightness} from being applicable to a wider range of interesting QCQPs.

Additional work in this direction~\cite{cifuentes2020geometry} studies objective value exactness from an algebraic point of view. 
Specifically, \citet{cifuentes2020geometry} consider QCQPs with fixed equality constraints and study the semialgebraic region of objective functions for which objective value exactness holds. As an example of their results, they give a formula for the degree of the algebraic boundary of this region in the setting of Euclidean distance minimization problems.

A related line of work has explored sufficient conditions for the \emph{rank-one-generated} (ROG) property~\cite{argue2020necessary,burer2015gentle,burer2013second,blekherman2017sums,hildebrand2016spectrahedral}. Recall that a conic subset of the positive semidefinite cone is said to be ROG if it is the convex hull of its rank-one elements. 
This property can be thought of as the SDP--QCQP analogue to the integrality property in the context of linear program relaxations of integer linear programs~\cite{argue2020necessary} and can be shown to imply both convex hull exactness and objective value exactness.
Research in this direction has established explicit descriptions of the ROG cones related to quadratic programs over low-dimensional polytopes~\cite{burer2013second} and ellipsoids with missing caps~\cite{burer2014trust}.
Other work in this direction~\cite{hildebrand2016spectrahedral,blekherman2017sums} explores the ROG property from an algebro-geometric perspective and establishes results related to the degree and representation of such sets.
More recently, \citet{argue2020necessary} gave general sufficient conditions for this property and completely characterized the ROG cones defined by at most two linear matrix inequalities.

SDP exactness has been studied in the context of quadratic matrix programs (QMPs) as well. A QMP is an optimization problem over a matrix variable $X\in\R^{r\times k}$, where the objective function and constraints are each of the form
\begin{align*}
\tr(X^\top A X) + 2\tr(B^\top X) + c
\end{align*}
for $A\in\S^{r}$, $B\in\R^{r\by k}$ and $c\in\R$, and can be thought of as a natural generalization to QCQPs.
This class of problems has been used to model robust least squares problems, the orthogonal Procrustes problem \cite{beck2007quadratic}, and sphere packing \cite{beck2012new}. 
QMPs and their SDP relaxations were first studied by \citet{beck2007quadratic,beck2012new} who showed that objective value exactness holds as long as the number of constraints is small compared to $k$.
Similarly, \citet{wang2020tightness} show that both objective value exactness and convex hull exactness hold
for (vectorized reformulations of) QMPs whenever the number of constraints is small enough and $\Gamma_P$ is polyhedral.

Finally, a number of exciting results have shown that various \emph{random} QCQPs have exact SDP relaxations with high probability.
For example, such results have been proved for signal-recovery tasks such as phase retrieval~\cite{candes2015phase},
sensor-network localization~\cite{shamsi2013conditions},
max-likelihood angular synchronization~\cite{bandeira2017tightness},
and clustering~\cite{mixon2016clustering,abbe2015exact,rujeerapaiboon2019size}. In these settings, the goal is to recover some ground-truth solution (the solution to some QCQP) via observations (constraints in a QCQP). These results then show that once an application-specific signal-to-noise ratio is large enough (for example, given enough observations/constraints), that the SDP relaxation is exact.
In contrast, a second line of work~\cite{locatelli2020kkt,burer2019exact} addresses random QCQPs which do not assume the existence of a ground-truth solution. In this direction, it is shown that when the number of constraints is \emph{small enough} that the SDP relaxation has a rank-one optimal solution.

\subsection{Overview and outline of the paper}

In this paper, we vastly generalize the framework first introduced in \cite{wang2020tightness,wang2020convex} by eliminating its reliance on the polyhedrality assumption.
Specifically, we give a broad set of sufficient conditions for both convex hull exactness and objective value exactness that are phrased \emph{in terms of the set of projective convex Lagrange multipliers $\Gamma_P$} (or the closely related sets $\Gamma$ and $\Gamma^\circ$; see \cref{sec:prelim}). In particular, these sufficient conditions can be checked in a systematic manner whenever $\Gamma$, $\Gamma_P$, or $\Gamma^\circ$ is sufficiently simple.
Furthermore, we show that our sufficient conditions for convex hull exactness are additionally \emph{necessary} under a technical assumption (see \cref{as:facially_exposed}).
We complement our high-level theory with a number of explicit examples illustrating our tools on QCQPs from various settings, including a basic QCQP originating from modeling big-M constraints, quadratic matrix programs, the partition QCQP, and two random QCQP models. 

Collectively, these results and examples offer evidence for the main message of this paper that \emph{questions of exactness can be treated systematically whenever
the convex Lagrange multipliers are well-understood.}

A summary of our contributions, along with an outline of the remainder of the paper, is as follows:

\begin{enumerate}
	\item In \cref{sec:prelim}, we formally define our setup and assumptions and recall basics regarding Lagrangian aggregation and the SDP relaxation of a QCQP. We then define and examine a number of faces of the cone of convex Lagrange multipliers $\Gamma$ and its polar cone $\Gamma^\circ$ that play key roles in our analysis.
\item In \cref{sec:conv_hull_exactness}, we present a sufficient condition for convex hull exactness that generalizes \cite[Theorem 1]{wang2020tightness}.
	This sufficient condition (\cref{thm:conv_hull_sufficient}) is based on an analysis of the ``rounding directions'' inside $\cS_\SDP$ and is performed in the original space. 
	Specifically, we show that convex hull exactness holds as long as certain systems of equations (that depend on $\Gamma$, $\Gamma_P$, or $\Gamma^\circ$) contain nontrivial solutions.
	In contrast to \cite[Theorem 1]{wang2020tightness}, our sufficient condition does not make any assumptions on the geometry of $\Gamma$ and can be used to cover additional interesting QCQPs (see \cref{sec:conv_applications}).
	One of our main technical contributions (\cref{thm:conv_nec}) shows that our sufficient condition for convex hull exactness is in fact also \emph{necessary} under the assumption that $\Gamma^\circ$ is facially exposed (see \cref{as:facially_exposed} and its surrounding discussion). 
	We end \cref{sec:conv_hull_exactness} by revisiting the polyhedral setting. We derive necessary and sufficient conditions for convex hull exactness (\cref{thm:polyhedral}) and compare it to the sufficient condition presented in \cite[Theorem 1]{wang2020tightness}. To the best of our knowledge, this is the first necessary and sufficient condition for convex hull exactness even in the context of diagonal QCQPs (where $\Gamma,\Gamma_P$ and $\Gamma^\circ$ are automatically polyhedral).

\item In \cref{sec:conv_applications}, we present example applications of our general results from \cref{sec:conv_hull_exactness} to
a prototypical set containing big-M constraints, quadratic matrix programs, and the partition problem. In all of these applications, the resulting $\Gamma$ sets are non-polyhedral, and thus the sufficient conditions from \cite{wang2020tightness} that work under the polyhedrality assumption of  $\Gamma$ fail to cover these applications. 

In \cref{sec:mixed_binary_programming}, we apply our framework to show that convex hull exactness holds for a well-studied set involving convex quadratics, binary variables and big-M relations. This set occurs as a substructure commonly studied in sparse regression applications. The convex hull characterization of this set is well-known in the literature and is often shown as a consequence of the perspective formulation trick due to \citet{ceria1999convex} (see also \cite{frangioni2006perspective,gunluk2010perspective,dong2013valid}).  

In \cref{subsec:quadratic_matrix_programming}, we show that the SDP relaxation of a quadratic matrix program satisfies convex hull exactness whenever the number of constraints is small (when compared to the rank of the matrix variable). This strengthens separate results first presented in \cite{wang2020tightness} and \cite{beck2007quadratic}; see \cref{rem:qmp}. In contrast to the \textit{ad hoc} proof given in \cite{wang2020tightness}, the proof we present in \cref{subsec:quadratic_matrix_programming} follows the outline of our general framework.

In \cref{sec:partition_problem}, we consider the QCQP formulation of the NP-hard partition problem and its SDP relaxation. Using our framework, we give an explicit description of the optimal value  and epigraph of the SDP relaxation. Consequently, we recover a result due to \citet{laurent1995positive} stating that \emph{deciding} whether objective value exactness holds for the partition QCQP is NP-hard. In contrast,
	we show that convex hull exactness never holds for the partition QCQP (as long as there are at least two nonzero weights). This then implies that deciding whether convex hull exactness holds for the partition QCQP is trivial.

\item In \cref{sec:obj_val_exactness}, we present a number of sufficient conditions for objective value exactness. In fact, our sufficient conditions further imply \emph{optimizer exactness}, i.e., that the optimizers of the QCQP and its (projected) SDP relaxation coincide.
\cref{subsec:obj_primal} presents a general sufficient condition (\cref{thm:suff_obj_primal}) for objective value exactness based on a primal analysis.
Similarly, \cref{subsec:obj_dual} presents a general sufficient condition (\cref{lem:suff_obj_dual}) for objective value exactness based on a dual analysis.
These results recover known sufficient conditions~\cite{wang2020tightness,burer2019exact} for objective value exactness and explain the roles played by polyhedrality in prior settings.
We additionally specialize these abstract conditions to derive more concrete conditions (see \cref{cor:one_shot_obj,cor:polyhedral_suff_primal_obj,cor:suff_obj_dual,cor:sufficiently_steep}) for objective value exactness.

\item In \cref{sec:obj_appl}, we present example applications of our general results from \cref{sec:obj_val_exactness} to two models of random QCQPs.
The results in this section show that ideas from \cref{sec:obj_val_exactness} can be applied even when $\Gamma$, $\Gamma_P$, or $\Gamma^\circ$ is only known approximately.
The models in this section are inspired by recent work on objective value exactness~\cite{burer2019exact,locatelli2020kkt} where random QCQPs have been used as a testing ground for understanding the strength or explanatory power of various sufficient conditions.
In \cref{subsec:fully_gaussian}, we consider a fully random model of QCQPs and show that objective value exactness (in fact optimizer exactness) holds with probability $1-o(1)$ in the regime where $m$ (the number of constraints) is fixed and $n$ (the number of variables) diverges to $+\infty$.
In \cref{subsec:semi_random}, we consider a semi-random model of QCQPs where, for each quadratic function, the quadratic terms are randomly generated and the linear and constant terms can be chosen adversarially.
In this setting, we show that a perturbed notion of exactness holds again with probability $1-o(1)$ as $n\to+\infty$.

\end{enumerate}
 \subsection{Notation}
For $x,y\in\R$, let $[x\pm y]\coloneqq [x-y, x+y]$, $x_+\coloneqq \max(0,x)$ and $x_+^2 \coloneqq (x_+)^2$.
Let $n$ a positive integer.
Let $[n]\coloneqq\set{1,\dots,n}$ and for $i\in[n]$, let $e_i$ denote the $i$th standard basis vector in $\R^n$.
Let $0_n$ denote the zero vector in $\R^n$. For $\delta\geq 0$ and $x\in\R^n$, let $B_n(x,\delta) \coloneqq \set{y\in\R^n:\, \norm{x-y}\leq \delta}$. When $n$ is clear from context, we will simply write $0$ and $B(x,\delta)$.
Let $\R^n_+$ (resp.\ $\R^n_{++}$) denote the entrywise nonnegative (resp.\ positive) vectors in $\R^n$. Similarly define $\R^n_-$ and $\R^n_{--}$.
Let $\bS^{n-1}\subseteq\R^n$ denote the unit sphere.
Let $\S^n$ denote the vector space of $n\by n$ real symmetric matrices and $\S^n_+$ the cone of positive semidefinite matrices.
For $M\in\S^n$, we write $M\succeq 0$ (resp.\ $M\succ 0$) to denote that $M$ is positive semidefinite (resp.\ positive definite). Let $\lambda_{\min}(M)=\lambda_1(M)\leq \dots\leq\lambda_n(M) =\lambda_{\max}(M)$ denote the spectrum of $M$ and let $\ker(M)$ denote the kernel of $M$.
For $x\in\R^n$, let $\Diag(x)\in\S^n$ denote the diagonal matrix with $\Diag(x)_{i,i} = x_i$ for all $i\in[n]$.
Let $\mathbb{E}$ denote an arbitrary Euclidean space. Given $\cM\subseteq \mathbb{E}$, let $\inter(\cM)$, $\bd(\cM)$, $\conv(\cM)$, $\clconv(\cM)$, $\cone(\cM)$, $\clcone(\cM)$, $\spann(\cM)$, $\cM^\perp$, $\rint(\cM)$, and $\dim(\cM)$
denote the
interior, boundary, convex hull, closed convex hull, conic hull, closed conic hull, span (linear hull), orthogonal complement, relative interior, and dimension of $\cM$ respectively.
Let $K\subseteq\mathbb{E}$ be a cone.
Let $K^\circ$ denote the polar cone of $K$. The notation $F\faceeq K$ denotes that $F$ is a face of $K$. By convention, faces of cones are always nonempty.
$C_c^\infty(\R^n)$ denotes the smooth functions with compact support on $\R^n$.
Let $\grad$ denote the gradient operator.
Let $N(\mu,\Sigma)$ denote the multivariate normal distribution with mean $\mu$ and covariance $\Sigma$. 

\section{Preliminaries}
\label{sec:prelim}

\subsection{Setup}
\label{subsec:setup_notation}

We will consider quadratically constrained quadratic programs (QCQPs) in $\R^n$ defined by $m$-many quadratic constraints
\begin{align}
\label{eq:qcqp}
\Opt \coloneqq \inf_{x\in\R^n}\set{q_\obj(x):\, \begin{array}
	{l}
	q_i(x) \leq 0,\,\forall i\in[m_I]\\
	q_i(x) = 0 ,\,\forall i\in[m_I +1, m]
\end{array}}.
\end{align}
Here, $m_I$ is the number of inequality constraints and $m_E \coloneqq m - m_I$ is the number of equality constraints.
For each $i\in[m]$, we will write $q_i(x) = x^\top A_i x + 2b_i^\top x + c_i$ for some $A_i \in\S^n$, $b_i\in\R^n$, and $c_i\in\R$. Similarly, we will write $q_\obj(x) = x^\top A_\obj x + 2b_\obj^\top x  + c_\obj$.

We will also consider the epigraph, $\cS$, of this QCQP, i.e.,
\begin{align*}
\cS \coloneqq \set{(x,t)\in\R^n\times \R:\, \begin{array}
	{l}
	q_\obj(x) \leq 2t\\
	q_i(x) \leq 0,\,\forall i\in[m_I]\\
	q_i(x) = 0 ,\,\forall i\in[m_I + 1, m]
\end{array}}.
\end{align*}

\subsection{Aggregation and the (projected) SDP relaxation}
\label{subsec:convex_lagrange_multipliers_sdp}

It is well known in the QCQP literature~\cite{benTal2001lectures,wang2020tightness,fujie1997semidefinite} that the SDP relaxation of a QCQP is equivalent (under a minor assumption) to the double-Lagrangian-dual. We will state this formally in \cref{lem:S_SDP} but will first need to introduce notation related to Lagrangian aggregation. 

Let $q:\R^n\to\R^{1+m}$ be indexed by $\set{\obj}\cup[m]$ where $q(x)_\obj = q_\obj(x)$ and $q(x)_i = q_i(x)$ for $i\in[m]$.
Let $e_\obj, e_1,\dots,e_m$ denote the corresponding unit vectors in $\R^{1+m}$. 
We will work extensively with the aggregated quadratic functions $\ip{(\gamma_\obj,\gamma),q(x)}$ for $(\gamma_\obj,\gamma)\in\R^{1+m}$.
For notational convenience, define $A(\gamma_\obj,\gamma)\coloneqq \gamma_\obj A_\obj + \sum_{i\in[m]}\gamma_i A_i$. Similarly define $b(\gamma_\obj,\gamma)$, and $c(\gamma_\obj,\gamma)$.
We will at times work in the \emph{projective} version of the dual space where the distinguished variable $\gamma_\obj$ is taken to be one.
Let $A[\gamma]\coloneqq A(1,\gamma)$ and similarly define $b[\gamma]$ and $c[\gamma]$. Set $[\gamma,q(x)] \coloneqq \ip{(1,\gamma),q(x)}$.
Note that
\begin{align*}
	\ip{(\gamma_\obj,\gamma),q(x)} &=
\gamma_\obj q_\obj(x) + \sum_{i=1}^m \gamma_i q_i(x)\\
&=
x^\top A(\gamma_\obj,\gamma) x + 2b(\gamma_\obj,\gamma)^\top x + c(\gamma_\obj,\gamma),\quad\text{and}\\
[\gamma,q(x)] &=	q_\obj(x) + \sum_{i=1}^m \gamma_i q_i(x)\\
&=
x^\top A[\gamma] x + 2b[\gamma]^\top x + c[\gamma].
\end{align*}

We recall and extend the following definition from~\cite{wang2020tightness}.
\begin{definition}
\label{def:convex_lagrange_mulipliers}
The \textit{cone of convex Lagrange multipliers} for \eqref{eq:qcqp} is
\begin{align*}
\Gamma &\coloneqq \set{(\gamma_\obj,\gamma)\in\R\times\R^m:\, \begin{array}
	{l}
	A(\gamma_\obj,\gamma)\succeq 0\\
	\gamma_\obj \geq 0\\
	\gamma_i \geq 0 ,\,\forall i\in[m_I]
\end{array}}.
\end{align*}
The \textit{set of projective convex Lagrange multipliers} for \eqref{eq:qcqp} is
\begin{align*}
\Gamma_P &\coloneqq \set{\gamma\in\R^m:\, (1,\gamma)\in\Gamma} = \set{\gamma\in\R^m:\, \begin{array}
	{l}
	A[\gamma]\succeq 0\\
	\gamma_i \geq 0,\,\forall i\in[m_I]
\end{array}}.\qedhere
\end{align*}\mathprog{\qed}
\end{definition}
Note that given $(\gamma_\obj,\gamma)\in\Gamma$, the quadratic function $x\mapsto \ip{(\gamma_\obj,\gamma),q(x)}$ is convex. Similarly, given $\gamma\in\Gamma_P$, the quadratic function $x\mapsto [\gamma,q(x)]$ is convex.

We will make the following \emph{blanket assumption} for the remainder of the paper. This assumption can be interpreted as a dual strict feasibility condition and is  standard in the literature \cite{beck2007quadratic,benTal1996hidden,ye2003new,burer2019exact,wang2020tightness}.

\begin{assumption}
\label{as:definite}
There exists $(\gamma_\obj^*,\gamma^*)\in\Gamma$ such that $A(\gamma_\obj^*,\gamma^*)\succ 0$. Equivalently, there exists $\gamma^*\in\Gamma_P$ such that $A[\gamma^*]\succ 0$.
\end{assumption}

\begin{remark}
\label{rem:Gamma_generated_by_Gamma_P}
Note that under \cref{as:definite}, we have that $\Gamma$ is the closed cone generated by its slice at $\gamma_\obj=1$, i.e., $\Gamma = \clcone\left(\set{(1,\gamma):\, \gamma\in\Gamma_P}\right)$. (See discussion following \cite[Assumption 2]{wang2020tightness})\mathprog{\qed}
\end{remark}

Recall that the (projected) \emph{SDP relaxation} of $\cS$ is given by
\begin{align}
\label{eq:sdp_relaxation_primal}
\cS_\SDP \coloneqq \set{(x,t)\in\R^n\times\R:\, \begin{array}
	{l}
	\exists X\succeq xx^\top:\\
	\ip{A_\obj, X} + 2b_\obj^\top x + c_\obj \leq 2t\\
	\ip{A_i, X} + 2b_i^\top x + c_i \leq 0 ,\,\forall i\in[m_I]\\
	\ip{A_i, X} + 2b_i^\top x + c_i = 0 ,\,\forall i\in[m_I+1,m]\\\end{array}} ,
\end{align}
and $\Opt_\textup{SDP}\coloneqq \inf_{(x,t)\in\cS_\SDP}2t$.
By taking $X = xx^\top$ in \eqref{eq:sdp_relaxation_primal}, we see that
$\Opt\geq \Opt_\SDP$ and
$\conv(\cS) \subseteq \cS_\SDP$.

The following lemma states that under \cref{as:definite}, we can rewrite $\cS_\SDP$ in terms of $\Gamma$.
This lemma follows from a straightforward duality argument.
\begin{restatable}{lemma}{lemSSDP}
\label{lem:S_SDP}
Suppose \cref{as:definite} holds. Then
\begin{align*}
\cS_\SDP &= \set{(x,t)\in\R^{n+1}:\, [\gamma,q(x)] \leq 2t ,\,\forall \gamma\in\Gamma_P}\\
&= \set{(x,t)\in\R^{n+1}:\, \ip{(\gamma_\obj,\gamma), q(x)}\leq 2\gamma_\obj t,\,\forall (\gamma_\obj,\gamma)\in\Gamma}\\
& = \set{(x,t)\in\R^n:\, q(x) - 2te_\obj \in\Gamma^\circ}.
\end{align*}
Here, $\Gamma^\circ$ denotes the polar cone of $\Gamma$.
\end{restatable}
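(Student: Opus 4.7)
The plan is to prove the three claimed identities by first verifying the equivalence of the three right-hand sides, and then showing that $\cS_\SDP$ coincides with the middle one (phrased in terms of $\Gamma$) via conic duality.

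The equivalence of the three right-hand sides is essentially bookkeeping. The characterization in terms of $\Gamma^\circ$ follows directly by unpacking the definition of the polar cone applied to the vector $q(x)-2te_\obj \in \R^{1+m}$. The two characterizations in terms of $\Gamma_P$ and $\Gamma$ agree because, for any $(\gamma_\obj,\gamma)\in\Gamma$ with $\gamma_\obj>0$, one can rescale to $\gamma/\gamma_\obj\in\Gamma_P$ (both inequalities are equivalent after dividing by $\gamma_\obj$), while the remaining $\gamma_\obj=0$ case is handled by a limiting argument via \cref{rem:Gamma_generated_by_Gamma_P}, which expresses $\Gamma$ as the closed cone generated by $\{1\}\times \Gamma_P$.

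Turning to the main assertion that $\cS_\SDP$ equals the middle set, the forward containment is straightforward: given $(x,t)\in \cS_\SDP$ with lifting $X\succeq xx^\top$, aggregating the defining inequalities and equalities of $\cS_\SDP$ by $(\gamma_\obj,\gamma)\in\Gamma$ yields
\begin{align*}
\ip{A(\gamma_\obj,\gamma),X}+2b(\gamma_\obj,\gamma)^\top x+c(\gamma_\obj,\gamma)\leq 2\gamma_\obj t.
\end{align*}
Since $A(\gamma_\obj,\gamma)\succeq 0$ combined with $X\succeq xx^\top$ forces $\ip{A(\gamma_\obj,\gamma),X}\geq x^\top A(\gamma_\obj,\gamma)x$, this produces the desired bound $\ip{(\gamma_\obj,\gamma),q(x)}\leq 2\gamma_\obj t$.

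The reverse containment is the technical heart. Given $(x,t)$ satisfying all the aggregated inequalities, the task is to construct $X\succeq xx^\top$ witnessing $(x,t)\in \cS_\SDP$. Substituting $Y=X-xx^\top$ reduces this to feasibility of the SDP in $Y\succeq 0$ subject to $\ip{A_\obj,Y}\leq 2t-q_\obj(x)$, $\ip{A_i,Y}\leq -q_i(x)$ for $i\in[m_I]$, and $\ip{A_i,Y}=-q_i(x)$ for $i\in[m_I+1,m]$. The Lagrangian dual of this feasibility problem is exactly $\sup_{(\gamma_\obj,\gamma)\in\Gamma}\bigl(\ip{(\gamma_\obj,\gamma),q(x)}-2\gamma_\obj t\bigr)$, whose value under our hypothesis is $0$ (attained at the origin). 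The main obstacle is ruling out a duality gap; this is precisely where \cref{as:definite} enters, since the existence of $(\gamma_\obj^*,\gamma^*)\in\Gamma$ with $A(\gamma_\obj^*,\gamma^*)\succ 0$ supplies a Slater point for the non-polyhedral (i.e.\ PSD) portion of the dual cone, which suffices to invoke strong conic duality for this mixed polyhedral/SDP program and conclude primal feasibility.
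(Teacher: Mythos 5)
Your proposal is correct and follows essentially the same route as the paper: both reduce the claim to strong conic duality between the aggregated dual over $\Gamma$ (equivalently $\Gamma_P$) and the lifted feasibility problem in $\xi = X - xx^\top \succeq 0$, with \cref{as:definite} supplying the strict dual feasibility that closes the duality gap, and both handle the passage between $\Gamma_P$, $\Gamma$, and $\Gamma^\circ$ via \cref{rem:Gamma_generated_by_Gamma_P} and the definition of the polar cone. The paper merely packages the two containments as a single identity $\sup_{\gamma\in\Gamma_P}[\gamma,q(x)] = \inf_{\xi\succeq 0}\{\cdots\}$, but the content is the same.
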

\begin{proof}
Fix $(x,t)\in\R^{n+1}$. Note that
\begin{align*}
\sup_{\gamma\in\Gamma_P} [\gamma,q(x)] &= \sup_{\gamma\in\R^m}\set{[\gamma, q(x)] :\,	\begin{array}
	{l}
	A[\gamma]\succeq 0\\
	\gamma_i \geq 0 ,\,\forall i\in[m_I]
\end{array}}\\
&= \inf_{\xi\in\S^n} \set{q_\obj(x) + \ip{A_\obj, \xi}:\, \begin{array}
	{l}
	q_i(x) + \ip{A_i, \xi} \leq 0 ,\,\forall i\in[m_I]\\
	q_i(x) + \ip{A_i, \xi} = 0 ,\,\forall i\in[m_I+1,m]\\
	\xi \succeq 0
\end{array}},
\end{align*}
where the second equation follows from the strong conic duality theorem and \cref{as:definite}. Taking $X \coloneqq xx^\top + \xi$, we deduce that the first equality in \cref{lem:S_SDP} holds.

Note that by \cref{as:definite}, $\Gamma = \clcone\set{(1,\gamma):\, \gamma\in\Gamma_P)}$ so that $[\gamma,q(x)]\leq 2t$ for all $\gamma\in\Gamma_P$ if and only if $\ip{(\gamma_\obj,\gamma),q(x)}\leq 2\gamma_\obj t$ for all $(\gamma_\obj,\gamma)\in\Gamma$; this gives the second equality. The third equality holds by definition of the polar cone.
\end{proof}

\begin{corollary}
\label{cor:opt_sdp_saddle}
Suppose \cref{as:definite} holds. Then
\begin{align}\label{eq:SDP_SP}
\Opt_\SDP = \inf_{x\in\R^n}\sup_{\gamma\in\Gamma_P} [\gamma,q(x)].
\end{align}
\end{corollary}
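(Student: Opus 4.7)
The plan is to derive this directly from the first equality in \cref{lem:S_SDP}, which already does the heavy lifting (a strong conic duality argument). Nothing further of substance should be required beyond reorganizing the nested infimum.

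First I would recall that by definition $\Opt_\SDP = \inf\{2t : (x,t)\in \cS_\SDP\}$. By \cref{lem:S_SDP}, under \cref{as:definite} the set $\cS_\SDP$ coincides with $\{(x,t)\in\R^{n+1} : [\gamma, q(x)] \leq 2t \text{ for all } \gamma \in \Gamma_P\}$. Equivalently, $(x,t)\in\cS_\SDP$ iff $2t \geq \sup_{\gamma\in\Gamma_P}[\gamma,q(x)]$.

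Next I would split the infimum over $(x,t)$ into a double infimum. Fixing $x\in\R^n$, the set of admissible $t$ values is exactly $\{t \in \R : 2t \geq \sup_{\gamma\in\Gamma_P}[\gamma,q(x)]\}$, and so the infimum of $2t$ over this set equals $\sup_{\gamma\in\Gamma_P}[\gamma,q(x)]$ (with the convention that this is $+\infty$ when the supremum is $+\infty$, in which case no finite $t$ is feasible and the inner infimum is $+\infty$ as well). Taking the outer infimum over $x\in\R^n$ yields the claimed identity \eqref{eq:SDP_SP}.

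There is no real obstacle here: the only subtle point is the handling of the case where $\sup_{\gamma\in\Gamma_P}[\gamma,q(x)] = +\infty$ for some $x$, but this is consistent with the interpretation of the infimum on the right-hand side and poses no issue. The corollary is essentially a restatement of \cref{lem:S_SDP} after eliminating the auxiliary variable $t$.
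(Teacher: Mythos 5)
Your proposal is correct and matches the paper's (implicit) reasoning: the corollary is stated without a separate proof precisely because it follows from the first characterization of $\cS_\SDP$ in \cref{lem:S_SDP} by eliminating the epigraph variable $t$, exactly as you do. Your handling of the $+\infty$ case is fine, and note that the inner supremum is never $-\infty$ since $\Gamma_P$ is nonempty under \cref{as:definite}.
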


\begin{corollary}
\label{cor:SSDP_closed}
Suppose \cref{as:definite} holds. Then, $\cS_\SDP$ is closed.
\end{corollary}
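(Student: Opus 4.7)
The plan is to use the third characterization of $\cS_\SDP$ provided by \cref{lem:S_SDP}, namely
\[
\cS_\SDP = \set{(x,t)\in\R^n\times\R:\, q(x) - 2te_\obj \in \Gamma^\circ}.
\]
This expresses $\cS_\SDP$ as the preimage of $\Gamma^\circ \subseteq \R^{1+m}$ under the map $\Phi:\R^{n+1}\to\R^{1+m}$ defined by $\Phi(x,t) \coloneqq q(x) - 2t e_\obj$.

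First, I would note that $\Phi$ is continuous: each component $q_\obj(x) - 2t$ and $q_i(x)$ is a polynomial in $(x,t)$. Second, I would invoke the standard fact that the polar cone of any set is closed (as an intersection of closed half-spaces $\set{y:\ip{z,y}\leq 0}$ over $z\in\Gamma$); thus $\Gamma^\circ$ is closed in $\R^{1+m}$. Since the preimage of a closed set under a continuous map is closed, $\cS_\SDP = \Phi^{-1}(\Gamma^\circ)$ is closed.

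As an alternative (in case a more self-contained presentation is desired), one could instead use the first or second characterization from \cref{lem:S_SDP} and write $\cS_\SDP$ as the intersection, over $\gamma\in\Gamma_P$, of the closed sets $\set{(x,t):[\gamma,q(x)]\leq 2t}$; an intersection of closed sets is closed. Both routes are essentially immediate, and there is no real obstacle to overcome: the content of the statement lies entirely in \cref{lem:S_SDP}, which reduces $\cS_\SDP$ from an existentially quantified SDP description (whose closedness is not obvious a priori, since projections of closed sets need not be closed) to an explicit closed description. The whole proof should be two or three lines.
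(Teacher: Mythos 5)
Your proof is correct and matches the paper's (implicit) reasoning: the corollary is stated without proof precisely because it follows immediately from the characterizations in \cref{lem:S_SDP}, either as the continuous preimage of the closed cone $\Gamma^\circ$ or as an intersection of closed sets indexed by $\Gamma_P$. Nothing is missing.
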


\begin{remark}\label{rem:explicit_gamma_polar_in_overline_cS}
In comparison with \eqref{eq:sdp_relaxation_primal}, the expressions for $\cS_\SDP$ given in \cref{lem:S_SDP} make the roles played by $\Gamma$, $\Gamma_P$, and $\Gamma^\circ$ explicit. In particular, these expressions for $\cS_\SDP$ lend themselves to a clean analysis whenever the corresponding dual set $\Gamma$, $\Gamma_P$, or $\Gamma^\circ$ is sufficiently simple.\mathprog{\qed}
\end{remark}

\begin{remark}
Phrased differently, one may minimize $\Opt_\SDP$ in the form \eqref{eq:sdp_relaxation_primal} by minimizing over $x\in\R^n$ the value of an inner minimization problem over the matrix variables $X\succeq xx^\top\in\S^n$. Writing $X = xx^\top + \xi$ and taking the SDP dual in the $\xi$ variable then results in the same saddle-point structure $\Opt_\SDP = \inf_{x\in\R^n}\sup_{\gamma\in\Gamma_P} [\gamma,q(x)]$ observed in \cref{cor:opt_sdp_saddle}. In other words, $\Gamma_P$ is simply the set of feasible solutions to this partial dual of \eqref{eq:sdp_relaxation_primal}.\mathprog{\qed}
\end{remark}

Let us consider a concrete example to help materialize these definitions.

\begin{example}
\label{ex:explicit_overline_cS}
Consider the following QCQP epigraph,
\begin{align*}
\cS\coloneqq \set{(x,t)\in\R^2\times\R:\, \begin{array}
	{l}
	q_\obj(x)\leq 2t\\
	q_1(x)\leq 0\\
	q_2(x)\leq 0
\end{array}},
\end{align*}
where $q_\obj(x) \coloneqq 2x_1x_2 - x_2 - 1/4$,~ $q_1(x) \coloneqq x_1^2 - x_2^2 - x_1 + x_2 - 1$, and $q_2(x)\coloneqq x_1^2 + x_2^2 -1$. Through a straightforward calculation, we obtain 
\begin{gather*}
\Gamma = \set{(\gamma_\obj,\gamma)\in\R^3:\, \begin{array}
	{l}
	\gamma_2 \geq \sqrt{\gamma_\obj^2 + \gamma_1^2}\\
	\gamma_\obj,\gamma_1,\gamma_2\geq 0
\end{array}},\\
\Gamma^\circ = \set{(\gamma_\obj,\gamma)\in\R^3:\, -\gamma_2 \geq \sqrt{(\gamma_\obj)_+^2 + (\gamma_1)_+^2}}, \text{ and}\\
\cS_\SDP = \set{(x,t)\in\R^2:\, -q_2(x) \geq \sqrt{(q_\obj(x) - 2t)_+^2 + q_1(x)_+^2}}.
\end{gather*}
See \cref{fig:example_overline_cS} for the plots of the sets corresponding to $\cS$, $\Gamma$, $\Gamma^\circ$, and $\cS_\SDP$.\mathprog{\qed}
\end{example}
\begin{figure}
\centering
		\begin{tikzpicture}
			\tikzset{edge/.style = {->,> = latex}}

			\node at (0,0) {\includegraphics[width=87.5pt]{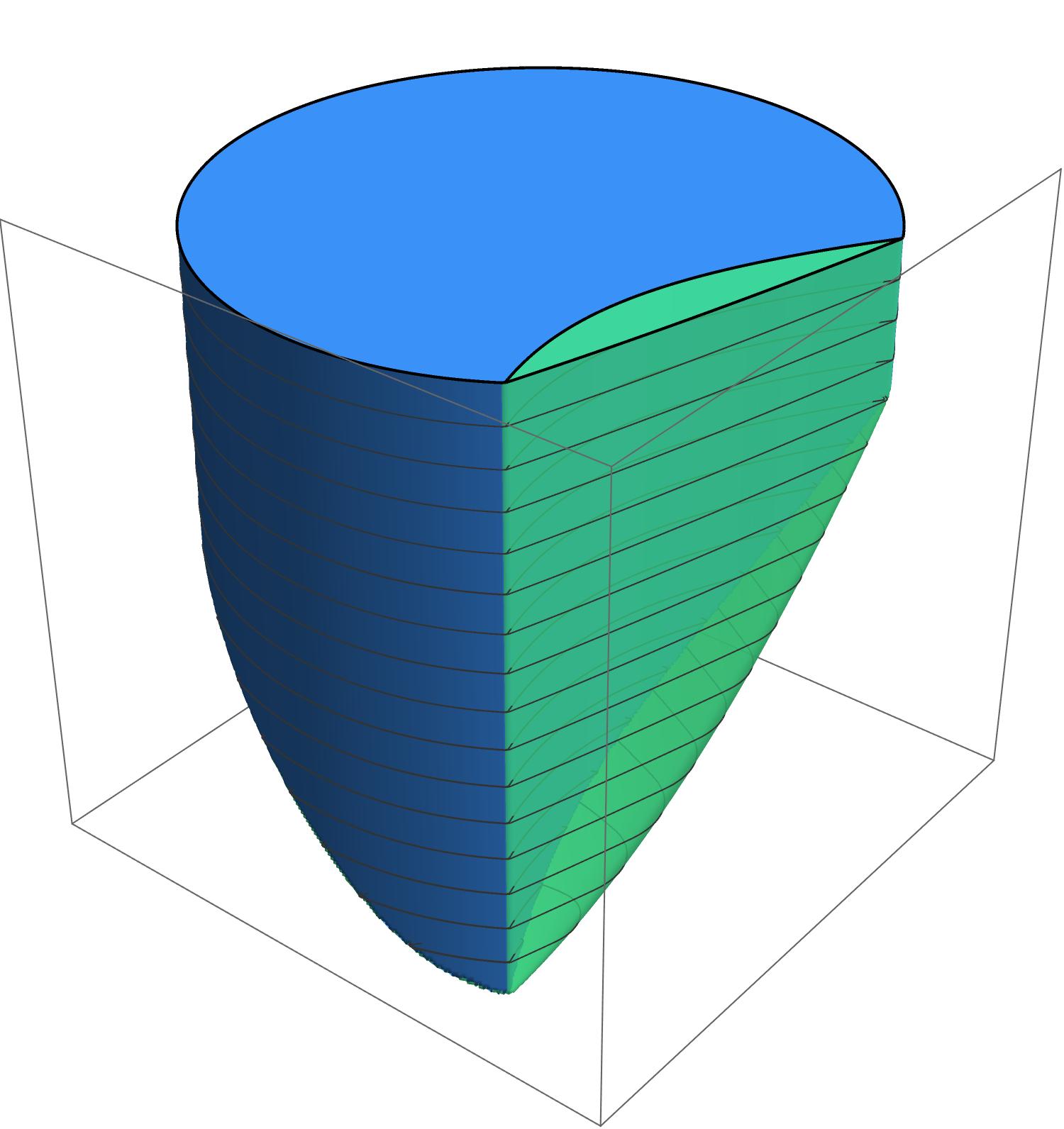}};
			\node at (5,0) {\includegraphics[width=87.5pt]{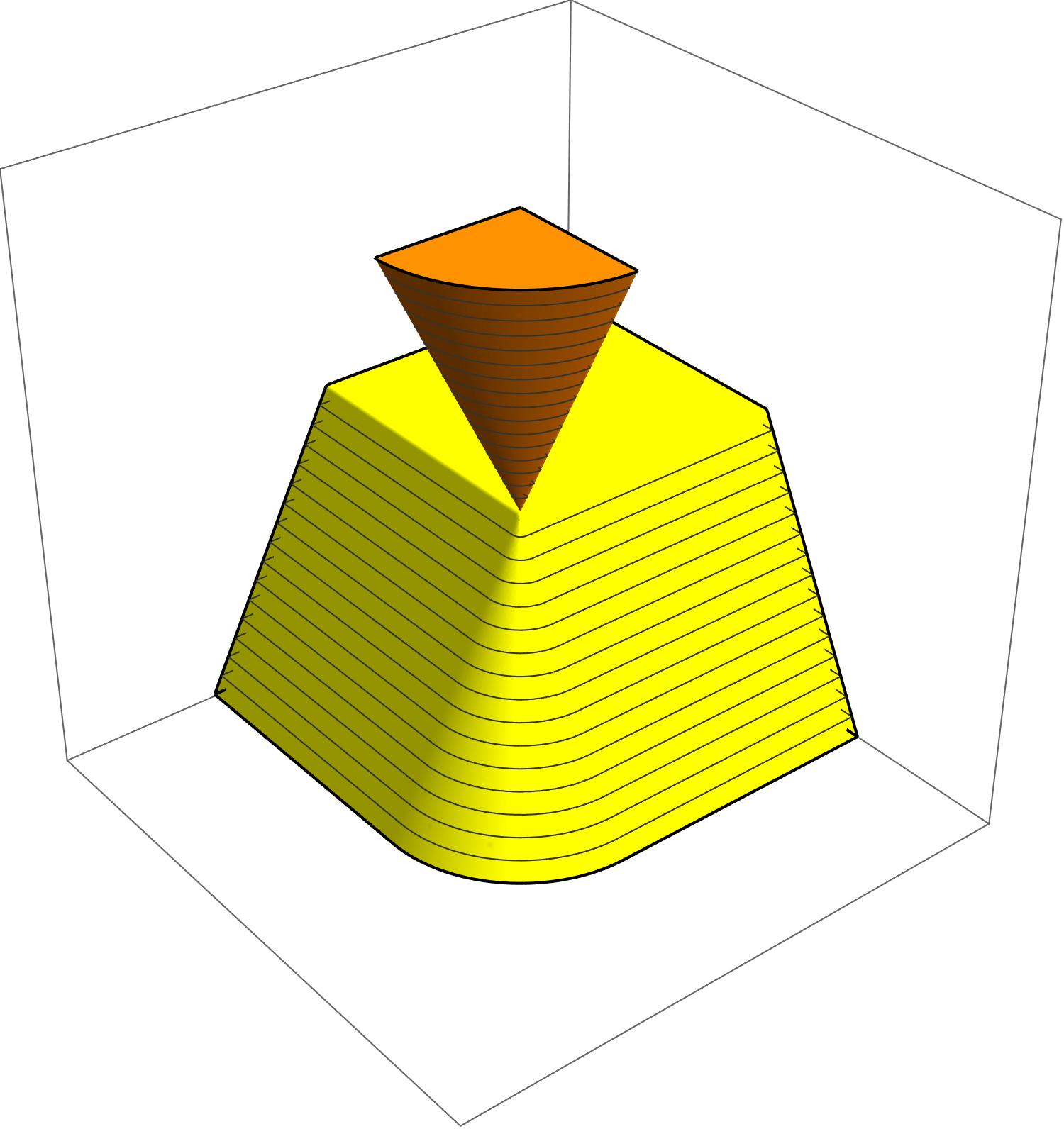}};

			\node (a) at (0.75,0.25) [circle,fill,inner sep=1pt]{};
			\node (b) at (5.75,-0.25) [circle,fill,inner sep=1pt]{};
			
			\path[->, > = latex](a) edge[bend left = 20] node[above] {} (b);

			\node at (-1,1.5) {$\cS$};
			\node at (1.5,0.75) {$\cS_\SDP$};
			\node at (5.5,1) {$\Gamma$};
			\node at (5.5,-1) {$\Gamma^\circ$};
			
			\node at (1.25,0) {$(x,t)$};
			\node at  (7.25,-0.25) {$(q_\obj(x)-t,q(x))$};
		\end{tikzpicture}
\caption{The sets $\cS$, $\cS_\SDP$, $\Gamma$, and $\Gamma^\circ$ from \cref{ex:explicit_overline_cS} are shown in blue, green, orange, and yellow respectively. By \cref{lem:S_SDP}, $(x,t)\in\cS_\SDP$ if and only if $q(x) - 2te_\obj \in \Gamma^\circ$.}
\label{fig:example_overline_cS}
\end{figure}

\subsection{Faces of $\Gamma$ and $\Gamma^\circ$}
\label{subsec:faces_gamma_gamma_polar}

In this section we define key faces of $\Gamma$ and $\Gamma^\circ$ that will play important roles in our analysis. We will additionally recall a number of elementary properties of convex cones and their faces specialized to our setting. See \cite{barker1978faces,barker1981theory,pataki2000geometry} for a more in-depth treatment of general convex cones and their faces.

Recall the following definitions.
\begin{definition}
\label{def:conjugate_face}
Given a face $\cG\faceeq\Gamma^\circ$ and $(g_\obj,g)\in\rint(\cG)$, the \emph{conjugate face of $\cG$} is 
\[\cG^\triangle \coloneqq \Gamma \cap \cG^\perp = \Gamma\cap (g_\obj,g)^\perp.\] 
Similarly, define the \emph{conjugate face of $\cF$} for a face $\cF\faceeq \Gamma$.\mathprog{\qed}
\end{definition}
\begin{definition}
For a face $\cG\faceeq\Gamma^\circ$, we say that $\cG$ is \textit{exposed} if there exists $(\gamma_\obj,\gamma)\in\Gamma$ such that $\cG = \Gamma^\circ \cap (\gamma_\obj,\gamma)^\perp$.\mathprog{\qed}
\end{definition}

We will additionally associate faces of $\Gamma$ and $\Gamma^\circ$ to points $(x,t)\in\cS_\SDP$ as follows.
\begin{definition}
\label{def:face_G_F}
Given $(x,t)\in\cS_\SDP$, let $\cG(x,t)\faceeq\Gamma^\circ$ denote the minimal face of $\Gamma^\circ$ containing
$q(x) - 2te_\obj$ and define $\cF(x,t)\coloneqq \cG(x,t)^\triangle$.\mathprog{\qed}
\end{definition}

The next fact follows from \cref{def:face_G_F}.
\begin{fact}
\label{fact:q_in_rint_G}
Given $(x,t)\in\cS_\SDP$, we have that $q(x) - 2te_\obj \in\rint(\cG(x,t))$ and $\cF(x,t) = \Gamma \cap (q(x) - 2te_\obj)^\perp$.
\end{fact}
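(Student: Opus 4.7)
The plan is to verify each claim using standard facts from convex cone theory.

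First I would establish that $q(x) - 2te_\obj \in \Gamma^\circ$, which is immediate from \cref{lem:S_SDP}, so the minimal face $\cG(x,t) \faceeq \Gamma^\circ$ containing this point is well-defined. Then the claim $q(x) - 2te_\obj \in \rint(\cG(x,t))$ follows from the standard fact that any convex set is the disjoint union of the relative interiors of its faces: if $q(x) - 2te_\obj$ lay on the relative boundary of $\cG(x,t)$, it would be contained in a proper face of $\cG(x,t)$ (and faces of faces of $\Gamma^\circ$ are themselves faces of $\Gamma^\circ$), contradicting the minimality of $\cG(x,t)$.

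For the second claim, $\cF(x,t) = \Gamma \cap (q(x) - 2te_\obj)^\perp$, I would argue by two inclusions. The inclusion $\cF(x,t) \subseteq \Gamma \cap (q(x) - 2te_\obj)^\perp$ is immediate: by \cref{def:conjugate_face}, $\cF(x,t) = \cG(x,t)^\triangle = \Gamma \cap \cG(x,t)^\perp$, and since $q(x) - 2te_\obj \in \cG(x,t)$, any element annihilating all of $\cG(x,t)$ in particular annihilates $q(x) - 2te_\obj$. For the reverse inclusion, suppose $(\gamma_\obj,\gamma) \in \Gamma$ satisfies $\ip{(\gamma_\obj,\gamma), q(x) - 2te_\obj} = 0$. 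Since $(\gamma_\obj,\gamma) \in \Gamma = (\Gamma^\circ)^\circ$, the linear functional $y \mapsto \ip{(\gamma_\obj,\gamma), y}$ is nonpositive on $\Gamma^\circ$ and hence on $\cG(x,t)$; thus it achieves its maximum (namely $0$) on $\cG(x,t)$ at the point $q(x) - 2te_\obj$. Because this maximizer lies in $\rint(\cG(x,t))$ by the first part, the supporting hyperplane $\{y : \ip{(\gamma_\obj,\gamma), y} = 0\}$ must contain all of $\cG(x,t)$ (a standard fact: a linear functional achieving its maximum over a convex set at a relative interior point is constant on that set). Consequently $(\gamma_\obj,\gamma) \in \cG(x,t)^\perp$, and together with $(\gamma_\obj,\gamma) \in \Gamma$ this places it in $\cF(x,t)$.

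Neither step presents a real obstacle; the proof is a routine application of two elementary convex-geometric lemmas (minimal-face characterization via relative interior, and the supporting-hyperplane criterion at relative interior points). The only thing to watch is ensuring the ambient space for the polar/perp operations is consistently $\R^{1+m}$ and that $(\Gamma^\circ)^\circ = \Gamma$ holds, which is guaranteed since $\Gamma$ is a closed convex cone (\cref{as:definite} guarantees $\Gamma$ is nonempty and closed, being defined by closed conic inequalities).
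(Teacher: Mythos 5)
Your proof is correct and matches the paper's (implicit) argument: the paper states this Fact without proof as following from \cref{def:face_G_F}, and your two steps---that the minimal face of $\Gamma^\circ$ containing $q(x)-2te_\obj$ must contain it in its relative interior, and that $\Gamma\cap\cG^\perp=\Gamma\cap(g_\obj,g)^\perp$ for any $(g_\obj,g)\in\rint(\cG)$---are exactly the standard facts being invoked. The only remark is that the second identity is already built into \cref{def:conjugate_face}, so once the first claim is established the second follows by applying that definition at the point $q(x)-2te_\obj$; your supporting-hyperplane argument for the reverse inclusion is in effect a (correct) proof that \cref{def:conjugate_face} is well-posed, i.e., independent of the choice of relative interior point.
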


\section{Convex hull exactness}
\label{sec:conv_hull_exactness}

In this section, we present necessary and sufficient conditions for convex hull exactness, i.e., the property that $\conv(\cS) = \cS_\SDP$.
These results form the basis of our assertion that \emph{exactness can be treated systematically whenever $\Gamma$, $\Gamma_P$, or $\Gamma^\circ$ is well-understood.}

We begin by rephrasing convex hull exactness as a question regarding the existence of certain ``rounding directions.'' The following result follows from basic convex analysis.
\begin{lemma}
\label{lem:conv_iff_R_nontrivial}
Suppose \cref{as:definite} holds. Then, $\conv(\cS) = \cS_\SDP$ if and only if for every $(x,t)\in\cS_\SDP\setminus\cS$, there exists a nonzero $(x',t')\in\R^{n+1}$ and $\alpha>0$ such that
\begin{align*}
[(x,t)\pm\alpha (x',t')] \subseteq \cS_\SDP.
\end{align*}
\end{lemma}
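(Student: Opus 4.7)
The plan is to prove both directions of the equivalence, noting that both sides of the ``iff'' ultimately hinge on the observation that the rounding condition at a point $(x,t)\in\cS_\SDP$ is equivalent to $(x,t)$ not being an extreme point of $\cS_\SDP$.

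For the forward direction, I would fix $(x,t)\in\cS_\SDP\setminus\cS$ and apply Carath\'eodory's theorem to write $(x,t)=\sum_{i=1}^k\lambda_i(x_i,t_i)$ with $(x_i,t_i)\in\cS$, $\lambda_i>0$, and $\sum_i\lambda_i=1$. Non-membership of $(x,t)$ in $\cS$ forces this combination to be nontrivial ($k\geq 2$ with at least two distinct summands). Taking the partition $I_1=\{1\}$, $I_2=\{2,\ldots,k\}$ produces two points $p_1=(x_1,t_1)\in\cS$ and $p_2=\sum_{i\geq 2}\tfrac{\lambda_i}{1-\lambda_1}(x_i,t_i)\in\conv(\cS)=\cS_\SDP$ that must be distinct (otherwise $(x,t)=p_1\in\cS$). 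Setting $(x',t')=p_1-p_2$ and $\alpha=\min(\lambda_1,1-\lambda_1)>0$ then gives $(x,t)\pm\alpha(x',t')\in[p_1,p_2]\subseteq\cS_\SDP$, the desired rounding direction.

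For the reverse direction, I would argue by contradiction. Suppose $\conv(\cS)\subsetneq\cS_\SDP$ and pick $(x_0,t_0)\in\cS_\SDP\setminus\conv(\cS)$. The separating hyperplane theorem supplies $(a,\beta)\in\R^{n+1}$ and $c\in\R$ with $a^\top x_0+\beta t_0<c\leq a^\top y+\beta s$ for all $(y,s)\in\cS$. Letting $c^*:=\inf_{\cS_\SDP}(a^\top y+\beta s)\leq a^\top x_0+\beta t_0<c$, the exposed face $F^*:=\{(y,s)\in\cS_\SDP:a^\top y+\beta s=c^*\}$ (assuming attainment; see below) is nonempty and disjoint from $\cS$. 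The next ingredient is that $\cS_\SDP$ is line-free under \cref{as:definite}: any lineality direction $(d,s)$ must satisfy $d^\top A[\gamma^*]d=0$ (by examining the $\alpha^2$ coefficient of $[\gamma^*,q(x+\alpha d)]\leq 2(t+\alpha s)$), so $A[\gamma^*]\succ 0$ forces $d=0$, and inspecting the $t$-slot then gives $s=0$. Thus $F^*$ is a line-free closed convex set and, via a minimal-face argument, admits at least one extreme point. Such an extreme point of $F^*$ is automatically an extreme point of $\cS_\SDP$ lying in $\cS_\SDP\setminus\cS$, and the rounding hypothesis would exhibit it as the midpoint of a nondegenerate segment in $\cS_\SDP$, contradicting its extremity.

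The main obstacle is justifying attainment of the infimum $c^*$ on the closed convex set $\cS_\SDP$, since a linear functional bounded below on a closed convex set need not attain its infimum in general. I would address this either by refining the choice of separating hyperplane (exploiting the pointedness of the recession cone of $\cS_\SDP$ afforded by line-freeness) or by an asymptotic argument along a minimizing sequence together with a recession direction $(d,s)\in\rec(\cS_\SDP)$ with $a^\top d+\beta s=0$. Once attainment is secured, the extreme-point-in-face contradiction closes the argument.
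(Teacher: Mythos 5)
Your forward direction is correct and is in substance the same as the paper's: writing a point of $\conv(\cS)\setminus\cS$ as a nontrivial convex combination of points of $\cS$ and splitting off one summand exhibits it as the midpoint of a nondegenerate segment in $\cS_\SDP$, i.e., as a non-extreme point, which is exactly a rounding direction.

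The reverse direction, however, has two genuine gaps. First, the strict separation $a^\top x_0+\beta t_0<c\leq a^\top y+\beta s$ for all $(y,s)\in\cS$ requires $(x_0,t_0)\notin\clconv(\cS)$, whereas you only know $(x_0,t_0)\notin\conv(\cS)$. Since $\conv(\cS)$ need not be closed, it can happen that $\clconv(\cS)=\cS_\SDP\neq\conv(\cS)$, in which case no point of $\cS_\SDP\setminus\conv(\cS)$ can be strictly separated from $\cS$ and your argument never gets started—yet the lemma still has content there. Second, the attainment issue you flag is real and your sketched repairs do not close it: the recession cone of $\cS_\SDP$ is $\R_+(0_n,1)$, so the sublevel sets of $(y,s)\mapsto a^\top y+\beta s$ on $\cS_\SDP$ are compact only when $\beta>0$; the separation theorem does not rule out $\beta=0$, and in that case the infimum $c^*$ can genuinely fail to be attained, so the exposed face $F^*$ may be empty. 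Both difficulties are bypassed by the paper's route: having shown (as you correctly do) that $\cS_\SDP$ is closed, contains no lines, and has $\R_+(0_n,1)$ as its only recession direction, one invokes Rockafellar's Theorem 18.5 to write $\cS_\SDP=\conv(\extr(\cS_\SDP))+\R_+(0_n,1)$. If every point of $\cS_\SDP\setminus\cS$ admits a rounding direction, then every extreme point of $\cS_\SDP$ lies in $\cS$, and since $\conv(\cS)+\R_+(0_n,1)=\conv(\cS)$ this yields $\cS_\SDP\subseteq\conv(\cS)$ with no separation or attainment argument needed. I recommend replacing your separation/exposed-face step with this representation theorem.
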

\begin{proof}
Note that $\cS_\SDP$ is a closed convex set containing no lines. Also, one can easily check that $(0_n,1)$ is indeed a recessive direction of $\cS_\SDP$. Furthermore, $(0_n,1)$ is the only recessive direction of $\cS_\SDP$. To see this, let $\gamma^*$ be such that $A[\gamma^*]\succ 0$ (which exists by \cref{as:definite}) and consider any $(x',t')$ where $x'$ is nonzero. Then, for any $(\tilde x,\tilde t)\in\cS_\SDP$ and all $\alpha>0$ large enough, $2(\tilde t+\alpha t') < [\gamma^*, q(\tilde x+\alpha x')]$. Therefore, we deduce by \cite[Theorem 18.5]{rockafellar1970convex}, that $\cS_\SDP$ is the sum of the convex hull of its extreme points and the direction $(0_n,1)$. In particular $\conv(\cS) = \cS_\SDP$ if and only if $(x,t)$ is not extreme for every $(x,t)\in\cS_\SDP\setminus\cS$. By definition, $(x,t)$ is not extreme if and only if there exists $(x',t')$ and $\alpha>0$ such that $[(x,t)\pm \alpha (x',t')]\subseteq \cS_\SDP$.
\end{proof}

We capture the relevant set in \cref{lem:conv_iff_R_nontrivial} in the following definition.
\begin{definition}
The subspace of \emph{rounding directions} at $(x,t)\in\cS_\SDP$ is
\begin{align*}
\cR(x,t) &\coloneqq \set{(x',t')\in\R^{n+1}:\,
\exists \alpha>0\text{ s.t. } [(x,t)\pm \alpha (x',t')]\subseteq\cS_\SDP}.
\end{align*}
This set is \emph{nontrivial} if it contains a nonzero element.\mathprog{\qed}
\end{definition}

Note that $\cR(x,t)$ is in fact a subspace so that its name is justified. Indeed, $\cR(x,t)$ is a convex cone as $\cS_\SDP$ is convex. Furthermore, it holds that $-\cR(x,t) = \cR(x,t)$.

\begin{remark}
One may compare our \emph{rounding directions} to other similar definitions from elementary convex analysis~\cite[Section 5.1]{hiriart2004fundamentals}. Fix a point $(x,t)\in\cS_\SDP$ and $(x',t')\in\R^{n+1}$. Recall that $(x',t')$ is a \emph{feasible direction} if there exists $\alpha>0$ such that $[(x,t), (x,t) + \alpha (x',t')] \subseteq\cS_\SDP$. In particular, feasible directions are a \emph{unidirectional} notion, whereas rounding directions are \emph{bidirectional}.
Next, recall that $(x',t')$ is a \emph{tangent direction} if it is a limit of feasible directions. Again, tangent directions are unidirectional.\mathprog{\qed}
\end{remark}

\begin{remark}\label{rem:suffices_check_epigraph_tight}
Suppose $(x,t)\in\cS_\SDP$ and $2t > \sup_{\gamma\in\Gamma_P}[\gamma,q(x)]$. Then, by \cref{lem:S_SDP} there exists $\alpha>0$ such that $[(x,t)\pm \alpha(0_n,1)]\subseteq\cS_\SDP$. In particular, it suffices to verify the condition of \cref{lem:conv_iff_R_nontrivial} for points $(x,t)\in\cS_\SDP\setminus\cS$ for which $2t = \sup_{\gamma\in\Gamma_P}[\gamma,q(x)]$.\mathprog{\qed}
\end{remark}

\subsection{Sufficient conditions for convex hull exactness}
\label{subsec:conv_suff}
In this section we identify a particular subset of the rounding directions at $(x,t)\in\cS_\SDP$. This then leads to a sufficient condition for convex hull exactness, i.e., the condition that $\conv(\cS) = \cS_\SDP$.

\begin{definition}
Given $(x,t)\in\cS_\SDP$, define
\begin{align*}
\cR'(x,t) &\coloneqq \set{(x',t')\in\R^{n+1}:~ q(x+\alpha x') - 2(t+\alpha t') e_\obj \in\spann(\cG(x,t)),\,\forall \alpha\in\R}.\qedhere\mathprog{\qed}
\end{align*}
\end{definition}

\begin{lemma}
\label{lem:R'_subset}
Suppose \cref{as:definite} holds and $(x,t)\in\cS_\SDP$. Then, $\cR'(x,t)\subseteq\cR(x,t)$.
\end{lemma}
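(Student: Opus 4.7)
The plan is to exploit the characterization $\cS_\SDP = \{(y,s) : q(y) - 2se_\obj \in \Gamma^\circ\}$ from \cref{lem:S_SDP} together with \cref{fact:q_in_rint_G}, which tells us that $q(x) - 2te_\obj$ lies in the \emph{relative interior} of the face $\cG(x,t) \faceeq \Gamma^\circ$. Since $\cR'(x,t)$ was defined precisely so that the curve obtained by perturbing $(x,t)$ along $(x',t')$ stays in $\spann(\cG(x,t))$, the problem reduces to showing that this curve does not escape $\cG(x,t)$ for small bidirectional step sizes.

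Concretely, I would fix $(x',t') \in \cR'(x,t)$ and introduce the auxiliary map
\begin{align*}
\phi:\R\to\R^{1+m},\qquad \phi(\alpha) \coloneqq q(x + \alpha x') - 2(t + \alpha t')e_\obj,
\end{align*}
which is a quadratic (hence continuous) function of $\alpha$, with $\phi(0) = q(x) - 2te_\obj$. The definition of $\cR'(x,t)$ gives $\phi(\alpha) \in \spann(\cG(x,t))$ for every $\alpha \in \R$, including negative values, so both forward and backward perturbations stay in the correct linear subspace.

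The next step is a standard topological fact about convex cones: since $\cG(x,t)$ is a convex cone containing $0$, its affine hull equals $\spann(\cG(x,t))$, and $\rint(\cG(x,t))$ is therefore an open subset of $\spann(\cG(x,t))$. Combining this with \cref{fact:q_in_rint_G}, which gives $\phi(0) \in \rint(\cG(x,t))$, and with the continuity of $\phi$, I obtain some $\alpha > 0$ such that $\phi(\beta) \in \rint(\cG(x,t))$ for every $\beta \in [-\alpha,\alpha]$. Because $\rint(\cG(x,t)) \subseteq \cG(x,t) \subseteq \Gamma^\circ$, each $\phi(\beta)$ lies in $\Gamma^\circ$, so \cref{lem:S_SDP} delivers $(x+\beta x',t+\beta t') \in \cS_\SDP$ throughout $\beta \in [-\alpha,\alpha]$, which is exactly $[(x,t)\pm\alpha(x',t')] \subseteq \cS_\SDP$ and therefore $(x',t') \in \cR(x,t)$.

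I do not anticipate a real obstacle here; the argument is essentially a packaging exercise. The only ingredient that must be invoked carefully is the elementary fact that the relative interior of a convex cone is open inside its linear span, which is what lets continuity of $\phi$ convert the one-point membership $\phi(0) \in \rint(\cG(x,t))$ into membership on a full symmetric interval around $0$, and hence into the bidirectional rounding certificate demanded by $\cR(x,t)$.
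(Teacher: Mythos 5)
Your argument is correct and is essentially the paper's own proof: both use that $q(x)-2te_\obj\in\rint(\cG(x,t))$ (\cref{fact:q_in_rint_G}), that the perturbed curve stays in $\spann(\cG(x,t))$ by definition of $\cR'(x,t)$, and continuity to land in $\cG(x,t)\subseteq\Gamma^\circ$ for a small symmetric interval, concluding via the third characterization in \cref{lem:S_SDP}. Your write-up merely makes explicit the step the paper leaves implicit, namely that the relative interior of the cone is open inside its span.
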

\begin{proof}
Let $(x,t)\in\cS_\SDP$ and $(x',t')\in\cR'(x,t)$. Then, by continuity and the fact that $q(x) - 2te_\obj \in \rint(\cG(x,t))$, there exists $\alpha>0$ such that
\begin{align*}
q(x+ \epsilon x') - 2(t + \epsilon t') e_\obj \in\cG(x,t)\subseteq \Gamma^\circ
\end{align*}
for all $\epsilon\in[\pm\alpha]$. By the third characterization of $\cS_\SDP$ in  \cref{lem:S_SDP}, we have that $[(x,t)\pm\alpha (x',t')]\subseteq \cS_\SDP$.
\end{proof}

\cref{lem:conv_iff_R_nontrivial,lem:R'_subset} immediately imply the following sufficient condition for convex hull exactness.
\begin{theorem}
\label{thm:conv_hull_sufficient}
Suppose \cref{as:definite} holds and that for all $(x,t)\in\cS_\SDP\setminus\cS$, the set $\cR'(x,t)$ is nontrivial. Then, $\conv(\cS) = \cS_\SDP$.
\end{theorem}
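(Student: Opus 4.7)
The proof is essentially a one-line assembly of the two preceding lemmas, so my plan is to chain \cref{lem:conv_iff_R_nontrivial} (which reduces convex hull exactness to the nontriviality of the rounding-direction cone $\cR(x,t)$ at every $(x,t)\in\cS_\SDP\setminus\cS$) with \cref{lem:R'_subset} (which identifies the more tractable inner approximation $\cR'(x,t)\subseteq\cR(x,t)$). Together these give exactly the implication that is claimed.

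Concretely, I would fix an arbitrary $(x,t)\in\cS_\SDP\setminus\cS$ and invoke the hypothesis to obtain some nonzero $(x',t')\in\cR'(x,t)$. \cref{lem:R'_subset} promotes this to a nonzero element of $\cR(x,t)$, which by definition furnishes an $\alpha>0$ with $[(x,t)\pm\alpha(x',t')]\subseteq\cS_\SDP$. Since the choice of $(x,t)\in\cS_\SDP\setminus\cS$ was arbitrary, the hypothesis of \cref{lem:conv_iff_R_nontrivial} is met, and we conclude $\conv(\cS)=\cS_\SDP$. Assumption \ref{as:definite} is needed only to invoke the two lemmas (which themselves use it to get the closed-convex form of $\cS_\SDP$ from \cref{lem:S_SDP}); no extra work is required here.

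There is no genuine obstacle at this stage, because the technical content has been absorbed into \cref{lem:conv_iff_R_nontrivial,lem:R'_subset}. The only conceptual point worth emphasizing in the write-up is why $\cR'(x,t)$ is the right surrogate to verify in applications: it is a linear subspace defined by the condition that the quadratic curve $\alpha\mapsto q(x+\alpha x')-2(t+\alpha t')e_\obj$ stays inside the \emph{linear span} of $\cG(x,t)$ for all $\alpha\in\R$, which reduces the nontriviality check to solving a linear system in $(x',t')$ determined by $\cG(x,t)$. This is why the theorem becomes operational whenever $\Gamma^\circ$ (and hence its faces $\cG(x,t)$) is well-understood.
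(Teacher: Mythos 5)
Your proposal is correct and is exactly the paper's argument: the theorem is stated there as an immediate consequence of \cref{lem:conv_iff_R_nontrivial} and \cref{lem:R'_subset}, precisely the chaining you describe. No gaps; the additional remarks on why $\cR'(x,t)$ is the operational surrogate are accurate but not needed for the proof itself.
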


We will see a number of applications of \cref{thm:conv_hull_sufficient} in \cref{sec:conv_applications}.

In \cref{lem:R'_if_epigraph_tight} below, we will record an alternate description of $\cR'(x,t)$.
We will require the following observation.

\begin{observation}
\label{obs:epi_tight_clcone}
Suppose \cref{as:definite} holds. Let $(x,t)\in\cS_\SDP$ where $2t = \sup_{\gamma\in\Gamma_P} [\gamma,q(x)]$. Then, $\spann(\cG(x,t))\not\supseteq \R\times 0_m$.
In particular, $\cG(x,t)^\perp = \spann\left(\cG(x,t)^\perp \cap \set{\gamma_\obj = 1}\right)$.
\end{observation}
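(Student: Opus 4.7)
The plan is to establish the main span claim by a short contradiction argument using the relative-interior structure of $\cG(x,t)$, then derive the ``in particular'' statement by a purely linear-algebraic manipulation.

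For the main claim, I would argue by contradiction. Assume $\R e_\obj \subseteq \spann(\cG(x,t))$. By \cref{fact:q_in_rint_G}, $q(x) - 2te_\obj \in \rint(\cG(x,t))$, so the definition of relative interior together with the assumption that $e_\obj \in \spann(\cG(x,t))$ yields some $\epsilon > 0$ with
\[
q(x) - 2(t - \epsilon/2)\,e_\obj \;=\; q(x) - 2te_\obj + \epsilon e_\obj \;\in\; \cG(x,t) \;\subseteq\; \Gamma^\circ.
\]
Applying the third characterization of $\cS_\SDP$ in \cref{lem:S_SDP} then gives $(x, t - \epsilon/2) \in \cS_\SDP$, and the first characterization in the same lemma yields $\sup_{\gamma \in \Gamma_P}[\gamma, q(x)] \leq 2(t - \epsilon/2) < 2t$, contradicting the hypothesis $2t = \sup_{\gamma \in \Gamma_P}[\gamma, q(x)]$.

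For the ``in particular'' statement, let $V \coloneqq \cG(x,t)^\perp$, so that $V^\perp = \spann(\cG(x,t))$. The main claim gives $e_\obj \notin V^\perp$, hence $V \not\subseteq \set{\gamma_\obj = 0}$, and I can pick some $v_0 \in V$ with $(v_0)_\obj = 1$. For any $w \in V$, the decomposition $w = w_\obj v_0 + (w - w_\obj v_0)$ reduces the claim to showing $V \cap \set{\gamma_\obj = 0} \subseteq \spann(V \cap \set{\gamma_\obj = 1})$. This last inclusion is immediate: for $u \in V \cap \set{\gamma_\obj = 0}$, both $v_0$ and $v_0 + u$ lie in $V \cap \set{\gamma_\obj = 1}$, and $u = (v_0 + u) - v_0$.

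Since the entire argument is only a handful of lines, there is essentially no technical obstacle. The one conceptual point is recognizing that the tightness hypothesis $2t = \sup_{\gamma \in \Gamma_P}[\gamma, q(x)]$ precisely forbids perturbing $q(x) - 2te_\obj$ in the $+e_\obj$ direction within $\Gamma^\circ$, because such a perturbation would certify that $(x, t-\epsilon/2)$ still lies in $\cS_\SDP$ and hence contradict the definition of $t$ as the infimal epigraph height at $x$.
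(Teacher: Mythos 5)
Your proposal is correct and follows essentially the same route as the paper: the paper derives the contradiction by noting $(0_n,1)\in\cR'(x,t)$ and invoking \cref{lem:R'_subset} to get $(x,t-\alpha)\in\cS_\SDP$, whereas you simply inline the underlying relative-interior perturbation of $q(x)-2te_\obj$ along $e_\obj$ that \cref{lem:R'_subset} is built on. The linear-algebra step for the ``in particular'' claim is likewise the same decomposition the paper uses, just written with different bookkeeping.
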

\begin{proof}
Suppose $\spann(\cG(x,t))\supseteq \R\times 0_m$ so that $(0_n, 1) \in\cR'(x,t)$. By \cref{lem:R'_subset}, there exists $\alpha>0$ such that  $(x,t - \alpha)\in\cS_\SDP$. This contradicts $2t = \sup_{\gamma\in\Gamma_p} [\gamma,q(x)]$. 

We deduce that $\spann(\cG(x,t))\not\supseteq\R\times0_m$. Equivalently, $\cG(x,t)^\perp \not\subseteq 0\times \R^m$ and there exists $(1,\bar\gamma)\in\cG(x,t)^\perp$.
Then, for any $(\gamma_\obj,\gamma)\in\cG(x,t)^\perp$, we can write $(\gamma_\obj,\gamma)$ as a linear combination of
\begin{align*}
&(\gamma_\obj,\gamma) + (1-\gamma_\obj) (1,\gamma)\qquad\text{and}
\qquad
(1,\gamma).\qedhere
\end{align*}
\end{proof}

\begin{lemma}
\label{lem:R'_if_epigraph_tight}
Suppose \cref{as:definite} holds and let $(x,t)\in\cS_\SDP$. Then,
\begin{align*}
\cR'(x,t) &= \set{(x',t')\in\R^{n+1}:\, \begin{array}
	{l}
	(x')^\top A(\gamma_\obj,\gamma) x' = 0  ,\,\forall (\gamma_\obj,\gamma)\in\cG(x,t)^\perp\\
	\ip{A(\gamma_\obj,\gamma)x + b(\gamma_\obj,\gamma), x'} - \gamma_\obj t' = 0,\,\forall (\gamma_\obj,\gamma)\in\cG(x,t)^\perp
\end{array}}
\end{align*}
If furthermore $2t = \sup_{\gamma\in\Gamma_P}[\gamma,q(x)]$, then
\begin{align*}
\cR'(x,t) &= \set{(x',t')\in\R^{n+1}:\, \begin{array}
	{l}
	(x')^\top A[\gamma] x' = 0  ,\,\forall (1,\gamma)\in\cG(x,t)^\perp\\
	\ip{A[\gamma]x + b[\gamma], x'} - t' = 0,\,\forall (1,\gamma)\in\cG(x,t)^\perp
\end{array}}.
\end{align*}
\end{lemma}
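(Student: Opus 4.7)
The plan is to rewrite the defining condition of $\cR'(x,t)$ using the elementary fact that a vector $v$ lies in $\spann(\cG(x,t))$ if and only if $\langle v,(\gamma_\obj,\gamma)\rangle = 0$ for every $(\gamma_\obj,\gamma)\in \cG(x,t)^\perp$, and then to expand the resulting scalar identity as a polynomial in $\alpha$ and match coefficients. This reduces the geometric membership condition to a pair of explicit equations on $(x',t')$ for each $(\gamma_\obj,\gamma)\in\cG(x,t)^\perp$.

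First, fix $(\gamma_\obj,\gamma)\in\cG(x,t)^\perp$, and compute
\begin{align*}
\langle q(x+\alpha x') - 2(t+\alpha t')e_\obj,\,(\gamma_\obj,\gamma)\rangle
&= (x+\alpha x')^\top A(\gamma_\obj,\gamma)(x+\alpha x') \\
&\quad + 2b(\gamma_\obj,\gamma)^\top(x+\alpha x') + c(\gamma_\obj,\gamma) - 2(t+\alpha t')\gamma_\obj.
\end{align*}
Expanding in $\alpha$ yields a quadratic polynomial in $\alpha$ whose constant term is $\langle q(x) - 2te_\obj,(\gamma_\obj,\gamma)\rangle$; this vanishes by \cref{fact:q_in_rint_G}, since $q(x)-2te_\obj\in\cG(x,t)$ and $(\gamma_\obj,\gamma)\in\cG(x,t)^\perp$. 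The coefficient of $\alpha$ equals $2\bigl(\langle A(\gamma_\obj,\gamma)x + b(\gamma_\obj,\gamma),x'\rangle - \gamma_\obj t'\bigr)$ and the coefficient of $\alpha^2$ equals $(x')^\top A(\gamma_\obj,\gamma)x'$. Since the polynomial must vanish for every $\alpha\in\R$ if and only if each coefficient vanishes, the membership $q(x+\alpha x')-2(t+\alpha t')e_\obj\in\spann(\cG(x,t))$ for all $\alpha$ is equivalent to the two stated equations holding for every $(\gamma_\obj,\gamma)\in\cG(x,t)^\perp$. This establishes the first equality.

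For the second equality, we invoke \cref{obs:epi_tight_clcone}, which asserts under the hypothesis $2t = \sup_{\gamma\in\Gamma_P}[\gamma,q(x)]$ that $\cG(x,t)^\perp = \spann\bigl(\cG(x,t)^\perp \cap \{\gamma_\obj = 1\}\bigr)$. Thus every $(\gamma_\obj,\gamma)\in\cG(x,t)^\perp$ is a finite linear combination of elements of the form $(1,\tilde\gamma)$ with $(1,\tilde\gamma)\in\cG(x,t)^\perp$. Because the maps $(\gamma_\obj,\gamma)\mapsto A(\gamma_\obj,\gamma)$ and $(\gamma_\obj,\gamma)\mapsto b(\gamma_\obj,\gamma)$ are linear, and because the two conditions $(x')^\top A(\gamma_\obj,\gamma)x' = 0$ and $\langle A(\gamma_\obj,\gamma)x + b(\gamma_\obj,\gamma),x'\rangle - \gamma_\obj t' = 0$ are linear in $(\gamma_\obj,\gamma)$, verifying them for all $(\gamma_\obj,\gamma)\in\cG(x,t)^\perp$ is equivalent to verifying them only at the slice $(1,\gamma)\in\cG(x,t)^\perp$, giving the second equality with $A[\gamma]$ and $b[\gamma]$.

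I do not expect any serious obstacle here: the argument is essentially bookkeeping, with the only nontrivial ingredient being the reduction via \cref{obs:epi_tight_clcone} in the second part. The one point that requires a moment's attention is the vanishing of the constant term, which must be justified using \cref{fact:q_in_rint_G} (so that $q(x)-2te_\obj$ actually lies in $\cG(x,t)$, hence is annihilated by $\cG(x,t)^\perp$) rather than in some larger linear hull.
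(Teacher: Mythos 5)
Your proposal is correct and follows essentially the same route as the paper: expand $\ip{(\gamma_\obj,\gamma),\,q(x+\alpha x')-2(t+\alpha t')e_\obj}$ as a quadratic in $\alpha$, note the constant term vanishes automatically since $q(x)-2te_\obj\in\cG(x,t)\perp\cG(x,t)^\perp$, match the remaining coefficients, and then deduce the second equality from \cref{obs:epi_tight_clcone} by linearity in $(\gamma_\obj,\gamma)$. No gaps.
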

\begin{proof}
Note that $(x',t')\in\cR'(x,t)$ if and only if for all $(\gamma_\obj,\gamma)\in\cG(x,t)^\perp$, we have that
\begin{align*}
&\ip{(\gamma_\obj,\gamma), q(x + \alpha x') - 2(t+\alpha t')e_\obj}\\
&\quad = 
\alpha^2 (x')^\top A(\gamma_\obj,\gamma) (x') + 2\alpha\left(\ip{A(\gamma_\obj,\gamma) x + b(\gamma_\obj,\gamma), x'}- t'\right)\\
&\qquad  + \ip{(\gamma_\obj,\gamma), q(x) - 2te_\obj}
\end{align*}
is identically zero in $\alpha$. This occurs if and only if for all $(\gamma_\obj,\gamma)\in\cG(x,t)^\perp$, we have
\begin{gather*}
(x')^\top A(\gamma_\obj,\gamma) x' = 0,\quad\text{and}\\
\ip{A(\gamma_\obj,\gamma) x + b(\gamma_\obj,\gamma), x'} - t' = 0.
\end{gather*}
This proves the first assertion. The second assertion follows from the first and \cref{obs:epi_tight_clcone}.
\end{proof}

\subsection{Necessary conditions for convex hull exactness}
In \cref{subsec:conv_suff}, we gave a sufficient condition for convex hull exactness by identifying a subset of directions $\cR'(x,t)\subseteq\cR(x,t)$ and invoking \cref{lem:conv_iff_R_nontrivial}.
In this section, we show that under a technical assumption (\cref{as:facially_exposed}), we have $\cR'(x,t)=\cR(x,t)$. This then leads to a necessary and sufficient condition for convex hull exactness under the technical assumption.

\begin{assumption}
\label{as:facially_exposed}
Suppose $\Gamma^\circ$ is facially exposed, i.e., every face of $\Gamma^\circ$ is exposed.
\end{assumption}

This assumption holds for any cone isomorphic to a slice of the nonnegative orthant, the second-order cone, or the positive semidefinite cone. See \cite{pataki2013connection} for a longer discussion of this assumption and its connections to the \textit{nice cones}. In general, all \textit{nice cones} are facially exposed.
Our analysis will be based on the following property of exposed faces $\cG\faceeq\Gamma^\circ$ (see~\cite[Definition 2.A.9]{barker1981theory} and its surrounding discussion):
\begin{fact}
\label{fact:exposed_double_conjugate}
A face $\cG\faceeq\Gamma^\circ$ is exposed if and only if $\cG = (\cG^\triangle)^\triangle$.
\end{fact}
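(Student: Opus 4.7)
The plan is to prove the equivalence by unwinding the definition of $\triangle$ and exploiting the ``relative interior trick'' already baked into \cref{def:conjugate_face}: namely, that if $y\in\Gamma$ and $z\in\rint(\cG)\subseteq\Gamma^\circ$ satisfy $\ip{y,z}=0$, then $\ip{y,\cdot}$ attains its maximum value $0$ on the convex set $\cG$ at the relative-interior point $z$, hence vanishes on all of $\cG$. The same argument, with the roles of $\Gamma$ and $\Gamma^\circ$ interchanged, will be the workhorse for the other direction.

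I would first record the ``always'' inclusion $\cG\subseteq (\cG^\triangle)^\triangle$. This is immediate from the definitions: $\cG\subseteq\Gamma^\circ$, and every $w\in\cG^\triangle=\Gamma\cap\cG^\perp$ is orthogonal to $\cG$ by construction, so $\cG\subseteq \Gamma^\circ\cap(\cG^\triangle)^\perp = (\cG^\triangle)^\triangle$. The content of the fact is therefore the reverse inclusion under an exposedness hypothesis.

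For the forward direction, suppose $\cG$ is exposed, so $\cG = \Gamma^\circ\cap y^\perp$ for some $y\in\Gamma$. A direct check gives $y\in\cG^\triangle$: we have $y\in\Gamma$, and $y\perp\cG$ by the choice of $y$, so $y\in\Gamma\cap\cG^\perp = \cG^\triangle$. Since $y\in\cG^\triangle$, any vector orthogonal to the entire face $\cG^\triangle$ is in particular orthogonal to $y$, i.e.\ $(\cG^\triangle)^\perp\subseteq y^\perp$. Intersecting with $\Gamma^\circ$ yields $(\cG^\triangle)^\triangle\subseteq\Gamma^\circ\cap y^\perp = \cG$, which combined with the always-inclusion gives equality.

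For the reverse direction, assume $\cG = (\cG^\triangle)^\triangle$ and pick any $y\in\rint(\cG^\triangle)$. I will show that $(\cG^\triangle)^\triangle = \Gamma^\circ\cap y^\perp$, at which point $\cG=\Gamma^\circ\cap y^\perp$ with $y\in\Gamma$ exhibits $\cG$ as exposed. The inclusion $(\cG^\triangle)^\triangle\subseteq\Gamma^\circ\cap y^\perp$ is trivial because $y\in\cG^\triangle$ forces $(\cG^\triangle)^\perp\subseteq y^\perp$. For the reverse inclusion, take $z\in\Gamma^\circ\cap y^\perp$: then on the cone $\cG^\triangle\subseteq\Gamma$, the linear functional $\ip{\cdot,z}$ is non-positive (since $z\in\Gamma^\circ$) and attains the value $0$ at the relative-interior point $y$, so by the relative-interior trick it vanishes throughout $\cG^\triangle$, giving $z\in(\cG^\triangle)^\perp$ and hence $z\in(\cG^\triangle)^\triangle$. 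This completes the argument, and the only ``obstacle'' of note is the relative-interior trick, which is precisely the standard fact already used to justify the well-posedness of $\cG^\triangle$ in \cref{def:conjugate_face}.
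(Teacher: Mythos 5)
Your proof is correct. The paper does not actually prove this fact---it is cited to Barker's work on the theory of cones---so there is no in-text argument to compare against; your write-up is a valid, self-contained verification. Both directions check out: the ``always'' inclusion $\cG\subseteq(\cG^\triangle)^\triangle$ and the forward direction are straightforward from the definitions, and the reverse direction correctly reduces to the standard observation that a linear functional bounded above by $0$ on a convex set and vanishing at a relative-interior point must vanish identically---the same observation that makes \cref{def:conjugate_face} well-posed.
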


We are now ready to prove a partial converse to \cref{lem:R'_subset}.
\begin{lemma}
\label{lem:R'_equal}
Suppose \cref{as:definite,as:facially_exposed} hold and let $(x,t)\in\cS_\SDP$. Then $\cR'(x,t) = \cR(x,t)$.
\end{lemma}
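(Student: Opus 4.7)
The inclusion $\cR'(x,t)\subseteq\cR(x,t)$ is the content of \cref{lem:R'_subset}, so the plan is to prove the reverse inclusion $\cR(x,t)\subseteq\cR'(x,t)$. Start with an arbitrary $(x',t')\in\cR(x,t)$ and pick $\alpha>0$ so that $[(x,t)\pm\alpha(x',t')]\subseteq\cS_\SDP$. Convexity of $\cS_\SDP$ together with \cref{lem:S_SDP} then shows that the quadratic curve
\[
v(\beta):=q(x+\beta x')-2(t+\beta t')e_\obj
\]
lies in $\Gamma^\circ$ for every $\beta\in[-\alpha,\alpha]$, and $v(0)=q(x)-2te_\obj\in\cG(x,t)$ by \cref{fact:q_in_rint_G}. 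The goal is to upgrade these interval-level facts into the stronger statement $v(\beta)\in\spann(\cG(x,t))$ for \emph{every} $\beta\in\mathbb{R}$, which is precisely the defining condition of $\cR'(x,t)$.

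The heart of the argument is a single scalar quadratic analysis. For each fixed $(\gamma_\obj,\gamma)\in\cF(x,t)$, set $\phi(\beta):=\ip{(\gamma_\obj,\gamma),v(\beta)}$. Then $\phi$ is a quadratic polynomial in $\beta$ enjoying three key properties: (i) $\phi(0)=0$ because $\cF(x,t)\subseteq v(0)^\perp$ by \cref{fact:q_in_rint_G}; (ii) $\phi(\beta)\le 0$ on $[-\alpha,\alpha]$ because $v(\beta)\in\Gamma^\circ$ and $(\gamma_\obj,\gamma)\in\Gamma$; and crucially, (iii) the leading coefficient of $\phi$ equals $(x')^\top A(\gamma_\obj,\gamma)x'$, which is \emph{non-negative} since $A(\gamma_\obj,\gamma)\succeq 0$ for every $(\gamma_\obj,\gamma)\in\Gamma$. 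Writing $\phi(\beta)=r\beta+s\beta^2$ with $s\ge 0$, summing $\phi(\alpha)\le 0$ and $\phi(-\alpha)\le 0$ gives $2s\alpha^2\le 0$, forcing $s=0$; the surviving linear piece $\phi(\beta)=r\beta$ must then satisfy $r\alpha\le 0$ and $-r\alpha\le 0$, so $r=0$. Hence $\phi\equiv 0$ as a polynomial in $\beta$. Since $(\gamma_\obj,\gamma)\in\cF(x,t)$ was arbitrary, this shows $v(\beta)\in\cF(x,t)^\perp$ for every $\beta\in\mathbb{R}$.

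With this in hand, \cref{as:facially_exposed} enters in one clean step. Because $\cG(x,t)$ is a face of $\Gamma^\circ$ it is exposed, so \cref{fact:exposed_double_conjugate} gives the sharp identity
\[
\cG(x,t)=(\cG(x,t)^\triangle)^\triangle=\cF(x,t)^\triangle=\Gamma^\circ\cap\cF(x,t)^\perp.
\]
Combining this identity with the previous paragraph and $v(\beta)\in\Gamma^\circ$ for $\beta\in[-\alpha,\alpha]$ yields $v(\beta)\in\cG(x,t)\subseteq\spann(\cG(x,t))$ throughout $\beta\in[-\alpha,\alpha]$. Since $v$ is a polynomial that takes values in the linear subspace $\spann(\cG(x,t))$ on an interval of positive length, it must take values there for all $\beta\in\mathbb{R}$, which is exactly the condition $(x',t')\in\cR'(x,t)$.

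The main obstacle in the plan is the quadratic analysis of $\phi$: without the non-negativity of the leading coefficient (gifted to us by $A(\gamma_\obj,\gamma)\succeq 0$), the three properties (i)--(iii) would not suffice to pin $\phi\equiv 0$ and the curve $v$ could wander out of $\cF(x,t)^\perp$. Separately, without \cref{as:facially_exposed} one could at best place $v(\beta)$ inside $\Gamma^\circ\cap\cF(x,t)^\perp$, which may strictly contain $\cG(x,t)$ in general; it is exactly facial exposedness that collapses this potential gap.
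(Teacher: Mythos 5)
Your proposal is correct and follows essentially the same route as the paper's proof: both reduce to showing the quadratic curve $v(\beta)$ stays in $\cG(x,t)$ on an interval (then extend by polynomiality), both use facial exposedness via \cref{fact:exposed_double_conjugate} to write $\cG(x,t)$ as $\Gamma^\circ$ intersected with the orthogonal complement of $\cF(x,t)$, and both kill the quadratic $\phi$ using nonpositivity on a symmetric interval, vanishing at $0$, and the nonnegative leading coefficient coming from $A(\gamma_\obj,\gamma)\succeq 0$. The only cosmetic difference is that the paper tests against a single relative-interior point $(f_\obj,f)\in\rint(\cF(x,t))$ and phrases the scalar argument as ``convex, nonpositive, zero at the origin,'' whereas you test against all of $\cF(x,t)$ and do the coefficient algebra explicitly.
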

\begin{proof}
Fix $(x',t')\in\cR(x,t)$. As $\cR(x,t)$ is a convex cone, we may without loss of generality assume that $[(x,t)\pm (x',t')]\subseteq \cS_\SDP$. Our goal is to show that $(x',t')\in\cR'(x,t)$, i.e., that
\begin{align*}
q(x+\alpha x') - 2(t+\alpha t')e_\obj \in\spann(\cG(x,t)),\,\forall \alpha\in\R.
\end{align*}
As each coordinate of this vector is quadratic in $\alpha$, it suffices to show instead that
\begin{align*}
q(x+\alpha x') - 2(t+\alpha t')e_\obj \in \cG(x,t),\,\forall \alpha\in[-1,1].
\end{align*}

Let $(f_\obj, f)\in\rint(\cF(x,t))$ so that by \cref{as:facially_exposed,fact:exposed_double_conjugate}, we may write $\cG(x,t) = \Gamma^\circ \cap (f_\obj, f)^\perp$.
As $[(x,t)\pm (x',t')]\subseteq \cS_\SDP$, we immediately have that $q(x+\alpha x') - 2(t+\alpha t')e_\obj\in \Gamma^\circ$ for all $\alpha \in[-1,1]$. It remains to verify that the map
\begin{align*}
\alpha\mapsto
\ip{
(f_\obj,f),
q(x+\alpha x') - 2(t+\alpha t')e_\obj}
\end{align*}
evaluates to zero on $\alpha \in[-1,1]$.
Again, as $[(x,t)\pm (x',t')]\subseteq \cS_\SDP$, this map is nonpositive for all $\alpha\in[-1,1]$.
Next, note that $(f_\obj,f)\in\cF(x,t) = \Gamma \cap (q(x) - 2te_\obj)^\perp$ so that this map evaluates to zero at $\alpha = 0$. Finally, $(f_\obj,f) \in \Gamma$ implies that this map is also convex. We conclude that this map is identically zero.
\end{proof}

The following necessary and sufficient condition for convex hull exactness then follows from \cref{lem:R'_equal}.
\begin{theorem}
\label{thm:conv_nec}
Suppose \cref{as:definite,as:facially_exposed} hold. Then, $\conv(\cS) = \cS_\SDP$ if and only if for all $(x,t)\in\cS_\SDP\setminus\cS$, the set $\cR'(x,t)$ is nontrivial.
\end{theorem}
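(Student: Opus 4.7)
The plan is essentially a one-step combination of results already in hand. Under \cref{as:definite}, \cref{lem:conv_iff_R_nontrivial} reformulates convex hull exactness as the geometric statement that every point of $\cS_\SDP\setminus\cS$ admits a nontrivial rounding direction, i.e., that $\cR(x,t)$ is nontrivial for every such $(x,t)$. Thus it suffices to show that, under both \cref{as:definite} and \cref{as:facially_exposed}, nontriviality of $\cR(x,t)$ is equivalent to nontriviality of $\cR'(x,t)$ at every $(x,t)\in\cS_\SDP\setminus\cS$.

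For the ``if'' direction I would simply quote \cref{thm:conv_hull_sufficient} (which itself follows from combining the inclusion $\cR'(x,t)\subseteq\cR(x,t)$ of \cref{lem:R'_subset} with \cref{lem:conv_iff_R_nontrivial}). Notably, this direction does not invoke \cref{as:facially_exposed}.

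For the ``only if'' direction, the key step is to upgrade the inclusion $\cR'(x,t)\subseteq\cR(x,t)$ to an equality. This is exactly the content of \cref{lem:R'_equal}, where \cref{as:facially_exposed} enters through \cref{fact:exposed_double_conjugate} to let one describe the minimal face $\cG(x,t)\faceeq\Gamma^\circ$ as $\Gamma^\circ\cap (f_\obj,f)^\perp$ for any $(f_\obj,f)\in\rint(\cF(x,t))$. Given $\cR'(x,t)=\cR(x,t)$, if $\conv(\cS)=\cS_\SDP$ then by \cref{lem:conv_iff_R_nontrivial} the set $\cR(x,t)=\cR'(x,t)$ is nontrivial at every $(x,t)\in\cS_\SDP\setminus\cS$, as required.

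Since the bulk of the work has been absorbed by \cref{lem:conv_iff_R_nontrivial,lem:R'_subset,lem:R'_equal}, the proof itself is a two-sentence chain of equivalences with no real obstacle. The only minor point to double-check is that the statement behaves correctly in the degenerate case $\cS_\SDP\setminus\cS=\emptyset$: then the universal quantifier holds vacuously and convex hull exactness is immediate from $\conv(\cS)\subseteq\cS_\SDP\subseteq\cS\subseteq\conv(\cS)$, so no separate argument is needed.
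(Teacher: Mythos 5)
Your proposal is correct and follows exactly the route the paper takes: the theorem is stated immediately after \cref{lem:R'_equal} and is obtained by combining \cref{lem:conv_iff_R_nontrivial}, \cref{lem:R'_subset}, and \cref{lem:R'_equal} precisely as you describe. Your observations that \cref{as:facially_exposed} is only needed for the ``only if'' direction and that the degenerate case $\cS_\SDP\setminus\cS=\emptyset$ is handled automatically are both accurate and consistent with the paper's treatment.
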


To close this subsection, we give a compact description of $\cR'(x,t)$ under \cref{as:facially_exposed}.

\begin{proposition}
\label{prop:R'_exposed}
	Suppose \cref{as:definite,as:facially_exposed} hold. Let $(x,t)\in\cS_\SDP$ where $2t = \sup_{\gamma\in\Gamma_P}[\gamma,q(x)]$ and let $(1,f)\in\rint(\cF(x,t))$. Then,
	\begin{align*}
	\cR'(x,t) &= \set{(x',t')\in\R^{n+1}:\, \begin{array}
		{l}
		x' \in\ker(A[f])\\
		\ip{A[\eta]x + b[\eta],x'} - t' = 0 ,\,\forall (1,\eta)\in\cG(x,t)^\perp
	\end{array}}.
	\end{align*}
\end{proposition}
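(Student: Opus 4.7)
The plan is to leverage \cref{lem:R'_if_epigraph_tight}: under the tightness hypothesis $2t = \sup_{\gamma\in\Gamma_P}[\gamma,q(x)]$ it describes $\cR'(x,t)$ as the set of $(x',t')$ satisfying $(x')^\top A[\gamma] x' = 0$ and $\ip{A[\gamma] x + b[\gamma], x'} - t' = 0$ for every $(1,\gamma)\in\cG(x,t)^\perp$. The affine condition on $(x',t')$ already matches the description claimed by the proposition verbatim, so the task reduces to showing that the family of quadratic equalities indexed by $(1,\gamma)\in\cG(x,t)^\perp$ collapses to the single linear condition $x'\in\ker(A[f])$.

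One direction is essentially free: since $(1,f)\in\cF(x,t) = \Gamma\cap \cG(x,t)^\perp$, substituting $(1,\gamma) = (1,f)$ into the quadratic family forces $(x')^\top A[f] x' = 0$, and combined with $A[f]\succeq 0$ (which holds because $(1,f)\in\Gamma$), this yields $A[f]x' = 0$.

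For the converse, I would associate to each $x'\in\ker(A[f])$ the vector $v\in\R^{1+m}$ with coordinates $v_\obj = (x')^\top A_\obj x'$ and $v_i = (x')^\top A_i x'$, so that the pairing satisfies $\ip{v,(\gamma_\obj,\gamma)} = (x')^\top A(\gamma_\obj,\gamma) x'$. This pairing is nonnegative on $\Gamma$ (since each $A(\gamma_\obj,\gamma)\succeq 0$) and vanishes at $(1,f)$, placing $-v\in\Gamma^\circ\cap (1,f)^\perp$. The crux of the argument, and where I expect the main obstacle, is the identification
\[
\Gamma^\circ\cap (1,f)^\perp = \cG(x,t).
\]
Granting this, $-v\in\cG(x,t)$ is orthogonal to every element of $\cG(x,t)^\perp$, giving $(x')^\top A[\gamma] x' = \ip{v,(1,\gamma)} = 0$ for all $(1,\gamma)\in\cG(x,t)^\perp$, as required.

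To establish the identification, the inclusion $\supseteq$ follows from $\cF(x,t) = \Gamma\cap\cG(x,t)^\perp\subseteq \cG(x,t)^\perp$, which by symmetry of the pairing gives $\cG(x,t)\subseteq \cF(x,t)^\perp\subseteq (1,f)^\perp$, combined with $\cG(x,t)\subseteq\Gamma^\circ$ by definition. For $\subseteq$, any $w\in \Gamma^\circ\cap (1,f)^\perp$ defines a linear functional $\ip{w,\cdot}$ that is nonpositive on $\Gamma$ and attains the value $0$ at the relative interior point $(1,f)$ of the face $\cF(x,t)\faceeq \Gamma$; by convexity it must then vanish on all of $\cF(x,t)$, placing $w\in \Gamma^\circ\cap \cF(x,t)^\perp = (\cG(x,t)^\triangle)^\triangle$. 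Invoking \cref{as:facially_exposed} together with \cref{fact:exposed_double_conjugate} collapses $(\cG(x,t)^\triangle)^\triangle$ back to $\cG(x,t)$, completing the identification and hence the converse implication.
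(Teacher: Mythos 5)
Your proposal is correct and follows essentially the same route as the paper's proof: reduce via \cref{lem:R'_if_epigraph_tight}, use $A[f]\succeq 0$ for the easy direction, and for the converse pass to the vector $v$ of quadratic-form values, showing $-v\in\Gamma^\circ\cap(1,f)^\perp=\cF(x,t)^\triangle=(\cG(x,t)^\triangle)^\triangle=\cG(x,t)$ via \cref{as:facially_exposed} and \cref{fact:exposed_double_conjugate}. The only difference is that you re-derive the identity $\Gamma^\circ\cap(1,f)^\perp=\Gamma^\circ\cap\cF(x,t)^\perp$ from the relative-interior argument, whereas the paper takes this as part of \cref{def:conjugate_face}.
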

\begin{proof}
Let $(1,f)\in\rint(\cF(x,t))$.
By \cref{lem:R'_if_epigraph_tight}, it suffices to show that $x'\in\ker(A[f])$ if and only if
\begin{align*}
(x')^\top A[\gamma] x' = 0,\,\forall (1,\gamma)\in\cG(x,t)^\perp.
\end{align*}
The reverse direction holds immediately as $(1,f) \in \cF(x,t) \subseteq \cG(x,t)^\perp$ and $A[f]\succeq 0$.

To see the forward direction: Let $x'\in \ker(A[f])$ and set $v_\obj = (x')^\top A_\obj x'$. Similarly, set $v_i = (x')^\top A_i x'$. Then,
\begin{align*}
\ip{(v_\obj,v),(\gamma_\obj,\gamma)} &= (x')^\top A(\gamma_\obj,\gamma) x' \geq 0,\,\forall (\gamma_\obj,\gamma)\in\Gamma.
\end{align*}
Thus $(-v_\obj,-v)\in \Gamma^\circ$. On the other hand, $\ip{(v_\obj,v),(1,f)} = (x')^\top A[f] x' = 0$. We deduce that $(-v_\obj,-v) \in \Gamma^\circ \cap (1,f)^\perp = \cF(x,t)^\triangle = \cG(x,t)$. In particular, $(x')^\top A(\gamma_\obj,\gamma) x' = \ip{(v_\obj, v), (\gamma_\obj,\gamma)}= 0$ for all $(\gamma_\obj,\gamma)\in\cG(x,t)^\perp$.
\end{proof}

\subsection{Revisiting the setting of polyhedral $\Gamma$}
\label{subsec:polyhedral}

\citet{wang2020tightness} give sufficient conditions for convex hull exactness under the assumption that $\Gamma$ is polyhedral.
This assumption holds, for example, when the set of quadratic forms $\set{A_\obj,A_1,\dots,A_m}$ is simultaneously diagonalizable.
Specializing \cref{thm:conv_nec} to this setting, we 
prove the following \emph{necessary and sufficient} counterpart to \cite[Theorem 1]{wang2020tightness}.

\begin{restatable}
	{theorem}{thmpolyhedral}
	\label{thm:polyhedral}
	Suppose \cref{as:definite} holds and that $\Gamma$ is polyhedral.
	Then, $\conv(\cS) = \cS_\SDP$ if and only if
	\begin{align*}
	\set{(x',t')\in\R^{n+1}:\, \begin{array}
		{l}
		x' \in \ker(A[f])\\
		\ip{b[\gamma], x'} - t' = 0,\,\forall (1,\gamma) \in \cF
	\end{array}}
	\end{align*}
	is nontrivial for every $\cF\faceeq\Gamma$ which is exposed by some vector $q(x) - 2te_\obj$ for $(x,t)\in\cS_\SDP\setminus\cS$. Here, $f$ is any vector such that $(1,f)\in \rint(\cF)$.
\end{restatable}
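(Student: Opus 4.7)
The plan is to combine the theorems from this section with the specific structure of the polyhedral case. First, observe that if $\Gamma$ is polyhedral then so is $\Gamma^\circ$, so every face of $\Gamma^\circ$ is exposed and \cref{as:facially_exposed} holds automatically. By \cref{thm:conv_nec} together with \cref{rem:suffices_check_epigraph_tight}, $\conv(\cS)=\cS_\SDP$ is equivalent to the nontriviality of $\cR'(x,t)$ for every $(x,t)\in\cS_\SDP\setminus\cS$ satisfying $2t=\sup_{\gamma\in\Gamma_P}[\gamma,q(x)]$. For such points, \cref{prop:R'_exposed} gives an explicit description of $\cR'(x,t)$ in terms of $\ker(A[f])$ (for any $(1,f)\in\rint(\cF(x,t))$) together with linear conditions indexed by $(1,\eta)\in\cG(x,t)^\perp$. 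The task thus reduces to showing that this description coincides with the set displayed in the theorem, which depends only on the face $\cF=\cF(x,t)$ of $\Gamma$.

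For the forward inclusion, note that $\cF(x,t)\subseteq\cG(x,t)^\perp$. A convexity argument (express $(1,f)$ as a strict convex combination involving any chosen $(1,\gamma)\in\cF(x,t)$ and use $A[\gamma]\succeq 0$) shows that $x'\in\ker(A[f])$ forces $A[\gamma]x'=0$, so $\ip{A[\gamma]x+b[\gamma],x'}=\ip{b[\gamma],x'}$ and the \cref{prop:R'_exposed} condition at $(1,\gamma)$ reduces exactly to $\ip{b[\gamma],x'}-t'=0$. For the reverse inclusion, I need to extend the conditions from $\cF(x,t)$ to all of $\cG(x,t)^\perp$. The key polyhedral ingredient is the identity $\spann(\cF(x,t))=\cG(x,t)^\perp$, which I would establish via polyhedral duality applied to the pair $\cF^\triangle=\cG$, $\cG^\triangle=\cF$ (both involutive since polyhedral cones are facially exposed). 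Combined with \cref{obs:epi_tight_clcone}, this means every $(1,\eta)\in\cG(x,t)^\perp$ can be written as an affine combination $\sum_j\alpha_j(1,\gamma^{(j)})$ with $\sum_j\alpha_j=1$ and $(1,\gamma^{(j)})\in\cF(x,t)$. Since $\gamma\mapsto A[\gamma]x+b[\gamma]-t'$ is affine on the slice $\gamma_\obj=1$, the equation at $(1,\eta)$ decomposes into a weighted sum (with weights summing to one) of the $\cF(x,t)$-indexed equations, each of which vanishes by hypothesis.

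The main obstacle will be cleanly establishing $\spann(\cF)=\cG^\perp$ and the associated affine slice statement; this amounts to verifying that $\cF(x,t)\cap\set{\gamma_\obj=1}$ has affine hull $\cG(x,t)^\perp\cap\set{\gamma_\obj=1}$, which can be done by a dimension count using $(1,f)\in\rint(\cF(x,t))$ together with \cref{obs:epi_tight_clcone}. Finally, the set in the theorem depends only on $\cF$ and not on the specific $(1,f)\in\rint(\cF)$---the convexity argument above shows that every such $f$ induces the same kernel of $A[f]$---so the quantification over $(x,t)\in\cS_\SDP\setminus\cS$ can be rephrased as a quantification over faces $\cF\faceeq\Gamma$ exposed by some vector $q(x)-2te_\obj$ with $(x,t)\in\cS_\SDP\setminus\cS$, recovering the statement of \cref{thm:polyhedral}.
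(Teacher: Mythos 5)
Your proposal is correct and follows essentially the same route as the paper's proof: polyhedrality of $\Gamma^\circ$ gives \cref{as:facially_exposed} for free, \cref{thm:conv_nec} and \cref{prop:R'_exposed} reduce everything to the displayed system, and the identity $\spann(\cF(x,t))=\cG(x,t)^\perp$ (which the paper gets from the polyhedral dimension formula $\dim(\cG)+\dim(\cG^\triangle)=\dim(\cG^\perp)+\dim(\cG)$) together with $\ker(A[f])\subseteq\ker(A[\gamma])$ for $(1,\gamma)\in\cF(x,t)$ collapses the conditions to those indexed by $\cF$. Your explicit treatment of the affine-combination step on the slice $\gamma_\obj=1$ is a point the paper passes over quickly, but it is the same argument.
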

\begin{proof}
We begin by noting that when $\Gamma$ is polyhedral, so too is $\Gamma^\circ$ so that \cref{as:facially_exposed} holds.
Next, we claim that for every face $\cG\faceeq\Gamma^\circ$
we have $\cG^\perp = \spann(\cG^\triangle)$. By definition, $\spann(\cG^\triangle) = \spann(\Gamma\cap \cG^\perp) \subseteq\cG^\perp$.
On the other hand, as $\Gamma$ and $\Gamma^\circ$ are polyhedral, we have that~\cite[Theorem 3]{tam1976note}
\begin{align*}
\dim(\cG) + \dim(\cG^\triangle) = m.
\end{align*}
Rearranging this equation, we have $\dim(\cG^\triangle) = m - \dim(\cG) = \dim(\cG^\perp)$. We conclude that $\cG^\perp = \spann(\cG^\triangle)$.

Let $(x,t)\in\cS_\SDP$ such that $2t = \sup_{\gamma\in\Gamma_P}[\gamma,q(x)]$ and let $(1,f)\in\rint(\cF(x,t))$. Then, 
\cref{obs:epi_tight_clcone} and \cref{prop:R'_exposed} imply that
\begin{align*}
\cR'(x,t) &= \set{(x',t')\in\R^{n+1}:\, \begin{array}
	{l}
	x'\in \ker(A[f])\\
	\ip{A[\gamma]x + b[\gamma],x'} - t' = 0 ,\,\forall (1,\gamma)\in\cG(x,t)^\perp
\end{array}}\\
&= \set{(x',t')\in\R^{n+1}:\, \begin{array}
	{l}
	x'\in \ker(A[f])\\
	\ip{A[\gamma]x + b[\gamma],x'} - t' = 0 ,\,\forall (1,\gamma)\in\cF(x,t)
\end{array}}\\
&= \set{(x',t')\in\R^{n+1}:\, \begin{array}
	{l}
	x'\in \ker(A[f])\\
	\ip{b[\gamma],x'} - t' = 0 ,\,\forall (1,\gamma)\in\cF(x,t)
\end{array}}.
\end{align*}
Here, the second line follows because we have shown $\cG^\perp = \spann(\cG^\triangle)$ holds for every face $\cG\faceeq\Gamma^\circ$ and by definition $\cF(x,t)=\cG(x,t)^\triangle$. The third line follows from the fact that $(1,f)\in\rint(\cF(x,t))$ implies $\ker(A[f])\subseteq\ker(A[\gamma])$ for every $(1,\gamma)\in\cF(x,t)$. 
The result then follows from \cref{thm:conv_nec}.
\end{proof}

\begin{remark}
The main difference between \cref{thm:polyhedral} and \cite[Theorem 1]{wang2020tightness} is that \cref{thm:polyhedral} only considers certain (\textit{a fortiori} semidefinite) faces of $\Gamma$ whereas \cite[Theorem 1]{wang2020tightness} imposes a constraint on \emph{every} semidefinite face of $\Gamma$. This idea of restricting the analysis to certain faces of $\Gamma$ was previously investigated by \cite{burer2019exact,locatelli2020kkt} who used it to provide sufficient conditions for objective value exactness.\mathprog{\qed}
\end{remark}  \section{Applications: Convex hull exactness}
\label{sec:conv_applications}

In this section, we apply the results of \cref{sec:conv_hull_exactness} to a number of problems. These examples provide further evidence towards the message that \emph{exactness can be treated systematically whenever $\Gamma$, $\Gamma_P$, or $\Gamma^\circ$ is well-understood}.

\subsection{Mixed binary programming}
\label{sec:mixed_binary_programming}

To begin, we apply our results to a well-studied prototypical set involving a convex quadratic function, a binary variable and a big-M relation. The example in this subsection highlights the systematic nature of our approach.

Consider the epigraph set
\begin{align*}
\cS = \set{(x,t)\in\R^2\times \R:\, \begin{array}
	{l}
	q_\obj(x) \coloneqq x_2^2 \leq 2t\\
	q_1(x) \coloneqq x_1(x_1 - 1) = 0\\
	q_2(x) \coloneqq \sqrt{2}x_2(x_1 - 1) = 0
\end{array}}.
\end{align*}
In words, $x_1$ is a binary on-off variable, $x_2$ is a continuous variable which is constrained to be off whenever $x_1$ is off, and $t$ is the epigraph variable corresponding to $x_2^2$.
The normalization of $q_2(x)$ is not important here and is made only for notational convenience in the calculations.

It is well-known that $\conv(\cS)$ is given by the perspective reformulation of $\cS$ (see e.g.,~\cite{frangioni2006perspective,gunluk2010perspective}), i.e.,
\begin{align}
	\label{eq:perspective_reformulation}
\conv(\cS) = \set{(x,t)\in\R^2\times \R:\, x_2^2 - 2 t x_1 \leq 0,\, 0\leq x_1\leq 1 }.
\end{align}

We give an alternative proof of \eqref{eq:perspective_reformulation}. We will show that $\conv(\cS) = \cS_\SDP$, the projected SDP relaxation, using \cref{thm:conv_nec}. Then, using an explicit description of $\Gamma^\circ$, we will give a description of $\conv(\cS)=\cS_\SDP$ in the original space.

A simple computation shows that in this setting, we have
\begin{gather*}
\Gamma = \set{(\gamma_\obj,\gamma)\in\R^3:\, \gamma_\obj + \gamma_1 \geq \sqrt{(\gamma_\obj-\gamma_1)^2 +\left(\sqrt{2}\gamma_2\right)^2}} \text{ and}\\
\Gamma^\circ = \set{(\ell_\obj,\ell)\in\R^3:\, -\ell_\obj-\ell_1 \geq  \sqrt{(\ell_\obj-\ell_1)^2 +\left(\sqrt{2}\ell_2\right)^2}}.
\end{gather*}
In words, $\Gamma$ and $\Gamma^\circ$ are both (rotated) second-order cones and \cref{as:definite,as:facially_exposed} hold.

It remains to show that for all $(x,t)\in\cS_\SDP\setminus \cS$, the set $\cR'(x,t)$ is nontrivial.
To this end, let $(x,t)\in\cS_\SDP\setminus \cS$.
Recall that $\Gamma^\circ$ has three types of faces: the two trivial faces (the apex and the cone itself) and the one-dimensional proper faces. Thus, there are three cases to consider: (i) $\cG(x,t) = \set{0}$, (ii) $\cG(x,t) = \Gamma^\circ$, and (iii) $\cG(x,t)$ is a one-dimensional face of $\Gamma^\circ$.

In case (i), $q(x) - 2te_\obj = 0$ implying that $(x,t)\in\cS$, a contradiction. In case (ii), $\spann(\cG(x,t)) = \R^3$ so that $\cR'(x,t) = \R^3$ and is nontrivial.
In the final case, a mechanical but slightly tedious application of \cref{prop:R'_exposed} (see \cref{sec:deferred_proofs_conv_appl}) gives
\begin{align}
	\label{eq:R_description_mixed_binary}
	\cR'(x,t) &= \set{\begin{pmatrix}
		2t\\
		-x_2\\
		0
	\end{pmatrix},\, \begin{pmatrix}
		-x_2\\
		x_1\\
		0
	\end{pmatrix},\, \begin{pmatrix}
		t\\
		-x_2\\
		x_1
	\end{pmatrix},\, \begin{pmatrix}
		x_2(x_1-1+2t)\\
		-x_1^2 + x_1 -2 t x_1 -2t\\
		2x_2
	\end{pmatrix} }^\perp.
	\end{align}
Finally, one may verify that $(x,t)\in\cR'(x,t)$ is nonzero.

\begin{remark}
Here, the motivation for the final step of checking that $(x,t)\in\cR'(x,t)$ is as follows: 
One can show that in case (iii), the first three vectors in \eqref{eq:R_description_mixed_binary} span the $2$-dimensional subspace orthogonal to $(x,t)$. In particular, 
$\cR'(x,t)$ is nontrivial if and only if $(x,t)\in\cR'(x,t)$.
\mathprog{\qed}
\end{remark}

We conclude that
\begin{align*}
&\conv(\cS) = \cS_\SDP =\\
&\quad\set{(x,t)\in\R^3:\, -(q_\obj(x) - 2t) - q_1(x) \geq \sqrt{(q_\obj(x) - 2t -q_1(x))^2 + 2q_2(x)^2}}.
\end{align*}

This example highlights the systematic nature of the approach outlined in \cref{thm:conv_nec} for proving convex hull exactness.
In contrast to \emph{ad hoc} proofs of convex hull exactness which may rely on \emph{guessing and verifying} a nonzero rounding direction, the system of equations defining $\cR'(x,t)$ gives a principled way of \emph{deducing} a direction.
While guessing such a rounding direction may be possible in low-dimensional settings (for example, the setting of the current subsection), this becomes more difficult in higher-dimensional settings where $\cS$ and $\cS_\SDP$ are difficult to visualize. We illustrate this in the following subsection.

 \subsection{Quadratic matrix programs}
\label{subsec:quadratic_matrix_programming}

Quadratic matrix programs (QMPs)~\cite{beck2007quadratic,wang2020tightness} are a generalization of QCQPs where the decision variable $x\in\R^n$ is replaced by a decision matrix $X\in\R^{r\times k}$. These problems find a variety of application and have been used to model robust least squares problems, the orthogonal Procrustes problem \cite{beck2007quadratic}, and certain sphere packing problems \cite{beck2012new}.
Formally, a QMP is an optimization problem in the variable $X\in\R^{r\times k}$, where the constraints and objective function are each of the form
\begin{align*}
\tr(X^\top \bb A X) + 2\tr(B^\top X) + c
\end{align*}
for some $\bb A\in\S^r$, $B\in\R^{r\times k}$, and $c\in\R$.

Alternatively, letting $x\in\R^n$ (resp.\ $b\in\R^n$) denote the vector formed by stacking the columns of $X$ (resp.\ $B$) on top of each other, we can rewrite the above expression as
\begin{align*}
x^\top (I_k\otimes \bb A)x + 2\ip{b, x} + c.
\end{align*}
We will choose to view QMPs as the special class of QCQPs where the quadratic forms $A_\obj,A_1,\dots,A_m$ are each of the form $I_k\otimes \bb A$ for some $\bb A \in\S^r$.

The following lemma establishes that if the number of constraints is small compared to $k$ (originally the width of the matrix variable), then convex hull exactness holds.

\begin{proposition}
\label{prop:qmp}
Suppose \cref{as:definite} holds and that $A_\obj = I_k \otimes \bb A_\obj$, $A_1=I_k \otimes \bb A_1$, \dots, $A_m =I_k \otimes \bb A_m$ for some $\bb A_\obj,\bb A_1,\dots,\bb A_m\in\S^r$. Furthermore, suppose $k\geq m$.
Then, $\cR(x,t)$ is nontrivial for every $(x,t)\in\cS_\textup{SDP}\setminus\cS$. In particular, convex hull exactness holds, i.e., $\conv(\cS) = \cS_\SDP$.
\end{proposition}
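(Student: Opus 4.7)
The plan is to invoke \cref{thm:conv_hull_sufficient}: it suffices to show that $\cR'(x,t)\subseteq\cR(x,t)$ is nontrivial for every $(x,t)\in\cS_\SDP\setminus\cS$, with the QMP structure $A_i = I_k\otimes\bb A_i$ driving the dimension count. Writing $x=\mathrm{vec}(X)$ and $x'=\mathrm{vec}(X')$ for $X,X'\in\R^{r\times k}$, it suffices to handle the epigraph-tight case $2t=\sup_{\gamma\in\Gamma_P}[\gamma,q(x)]$, since otherwise $(0_n,1)\in\cR(x,t)$ by \cref{rem:suffices_check_epigraph_tight}.

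The first main ingredient is a multiplier $f$ with $\ker(\bb A[f])\neq\{0\}$, produced via SDP duality. Rewriting \eqref{eq:sdp_relaxation_primal} via the gap variable $\xi = \bb X - xx^\top\in\S^n_+$, the minimum feasible $2t$ at fixed $x$ equals the value of a primal SDP in $\xi$, with dual $\sup_{\gamma\in\Gamma_P}[\gamma,q(x)]$; \cref{as:definite} supplies Slater on the dual side, hence strong duality with primal attainment. Let $\Gamma_P^*=\{\gamma\in\Gamma_P:[\gamma,q(x)]=2t\}$ be the (nonempty) dual-optimal face and pick $f\in\rint\Gamma_P^*$ so that $(1,f)\in\rint\cF(x,t)$. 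For any primal-optimal $\xi^*$, SDP complementary slackness gives $\tr(A[f]\xi^*)=0$, hence $A[f]\xi^*=0$ (both are PSD), so $\range(\xi^*)\subseteq\ker(A[f])$. Moreover $\xi^*\neq 0$: were $\xi=0$ primal-optimal, $X$ would be QCQP-feasible with $q_\obj(x)=2t$, placing $(x,t)\in\cS$. Since $\ker(A[f])=\ker(I_k\otimes\bb A[f])$ is nontrivial iff $\ker(\bb A[f])$ is, we conclude $d\coloneqq\dim\ker(\bb A[f])\geq 1$.

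With $d\geq 1$ in hand, the proof concludes by a dimension count using \cref{prop:R'_exposed}. Note that $\Gamma$ is a linear slice of the nice cone $\S^r_+\times\R_+^{1+m_I}$ (using $I_k\otimes M\succeq 0 \Leftrightarrow M\succeq 0$), hence $\Gamma^\circ$ is facially exposed and \cref{as:facially_exposed} applies. \cref{prop:R'_exposed} identifies $\cR'(x,t)$ as the set of $(x',t')$ satisfying (i) $x'\in\ker(A[f])$, i.e., each column of $X'$ lies in $\ker(\bb A[f])$, a subspace of dimension $dk\geq k$; and (ii) the affine system $\ip{A[\eta]x+b[\eta],x'}=t'$ for $(1,\eta)\in\cG(x,t)^\perp$. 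Differencing two such equations eliminates $t'$ and yields at most $\dim\cG(x,t)^\perp-1 = m-\dim\cG(x,t)\leq m-1$ linear constraints on $x'$ alone (with $\dim\cG(x,t)\geq 1$ because $q(x)-2te_\obj\in\cG(x,t)$ is nonzero when $(x,t)\notin\cS$). Hence the admissible $x'$-space has dimension at least $dk-(m-1)\geq k-m+1\geq 1$ when $k\geq m$, producing a nonzero $(x',t')\in\cR'(x,t)\subseteq\cR(x,t)$.

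The hard part will be establishing $d\geq 1$ cleanly: this requires selecting $f$ in the \emph{relative interior} of the dual-optimal face $\Gamma_P^*$, so that complementary slackness transmits to the specific $f$ supplied to \cref{prop:R'_exposed}, together with the observation that $(x,t)\notin\cS$ forces the primal SDP gap $\xi^*$ to be nonzero. Facial exposedness of $\Gamma^\circ$ (needed to invoke \cref{prop:R'_exposed}) is a secondary technical point handled by the nice-cone structure of $\Gamma$. Once these are in place, the remainder is a routine underdetermined linear-system argument.
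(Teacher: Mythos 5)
Your overall architecture---reduce to the epigraph-tight case, exhibit a degenerate direction coming from the Kronecker structure, and finish with a $k+1>m$ dimension count---parallels the paper's, and the final counting step is correct. But the route you take to certify the quadratic conditions has two genuine gaps. (i) You invoke \cref{prop:R'_exposed}, which needs \cref{as:facially_exposed}, and you justify that assumption by noting that $\Gamma$ is a slice of $\S^r_+\times\R^{1+m_I}_+$. That shows $\Gamma$ itself is facially exposed, but the assumption concerns $\Gamma^\circ$, which is the \emph{closed linear image} of the dual of that product cone, i.e., a spectrahedral shadow; such sets can have non-exposed faces (e.g., the cone over the convex hull of a disk and an exterior point), so facial exposedness does not transfer across polarity. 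Moreover, the direction of \cref{prop:R'_exposed} you actually need---$x'\in\ker(A[f])$ implies $(x')^\top A(\gamma_\obj,\gamma)x'=0$ for all $(\gamma_\obj,\gamma)\in\cG(x,t)^\perp$---is exactly the direction whose proof uses exposedness of $\cG(x,t)$, namely the identity $\Gamma^\circ\cap(1,f)^\perp=\cG(x,t)$ rather than a possibly strictly larger face. (ii) Your choice of $f$ presupposes that $\sup_{\gamma\in\Gamma_P}[\gamma,q(x)]$ is \emph{attained}: dual Slater from \cref{as:definite} gives attainment of the inner problem in $\xi$ (so $\xi^*$ exists), but not attainment of the supremum over $\gamma$, so $\Gamma_P^*$ may be empty and no $(1,f)\in\rint(\cF(x,t))$ need exist; complementary slackness then has nothing to be applied to.

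The paper's proof is engineered to avoid both issues by staying on the primal side: it uses only \cref{thm:conv_hull_sufficient} together with the first, assumption-free characterization in \cref{lem:R'_if_epigraph_tight}, and it builds the degenerate direction $y$ not from a dual multiplier but from the lifted feasible matrix $Y=({1\over k}I_k)\otimes\bb Y$, choosing $y$ with $yy^\top\preceq\bb Y$. That extra relation yields \emph{both} $-(\ell_\obj,\ell)\in\Gamma^\circ$ and $q(x)-2te_\obj+(\ell_\obj,\ell)\in\Gamma^\circ$, and since $q(x)-2te_\obj\in\rint(\cG(x,t))$ and $\cG(x,t)$ is a face, this forces $(\ell_\obj,\ell)\in\spann(\cG(x,t))$ with no exposedness and no dual attainment. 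To repair your argument you would either have to prove facial exposedness of $\Gamma^\circ$ and dual attainment for QMPs, or replace an arbitrary element of $\ker(A[f])$ by one drawn from the range of the primal slack $\xi^*$---at which point you have essentially reconstructed the paper's construction.
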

\begin{proof}
Fix $(x,t)\in\cS_\SDP\setminus\cS$.
Based on \cref{thm:conv_hull_sufficient,lem:R'_if_epigraph_tight}, our goal is to prove that
\begin{align}
\label{eq:qmp_R}
\cR'(x,t) =
\set{(x',t')\in\R^{n+1}:\, \begin{array}
	{l}
	x'^\top A(\gamma_\obj,\gamma) x' = 0 ,\,\forall (\gamma_\obj,\gamma)\in\cG(x,t)^\perp\\
	\ip{A(\gamma_\obj,\gamma)x + b(\gamma_\obj,\gamma), x'} - \gamma_\obj t' = 0 ,\,\forall (\gamma_\obj,\gamma)\in\cG(x,t)^\perp
\end{array}}
\end{align}
is nontrivial.
We claim that it suffices to show how to construct a nonzero $y\in\R^r$ such that
\begin{align}\label{eq:qmp_y}
y^\top \bb A(\gamma_\obj,\gamma) y = 0 ,\,\forall (\gamma_\obj,\gamma) \in\cG(x,t)^\perp.
\end{align}
To see that this suffices, note that for any $w\in\R^k$, the vector $x'\coloneqq w\otimes y$ satisfies the first constraint in \eqref{eq:qmp_R} since for $(\gamma_\obj,\gamma)\in\cG(x,t)^\perp$, we have
\begin{align*}
(w\otimes y)^\top A(\gamma_\obj,\gamma) (w\otimes y) = (w^\top w)(y^\top \bb A(\gamma_\obj,\gamma) y) = 0.
\end{align*}
Then, $(w\otimes y, t')\in\cR'(x,t)$ if and only if
\begin{align*}
\ip{A(\gamma_\obj,\gamma) x + b(\gamma_\obj,\gamma), w\otimes y} - \gamma_\obj t' = 0 ,\,\forall (\gamma_\obj,\gamma)\in\cG(x,t)^\perp.
\end{align*}
This is a system of
$\dim(\cG(x,t)^\perp)$-many homogeneous linear equations in the variables $(w,t)\in\R^{k+1}$.
Note that as $\cG(x,t) \ni q(x) - 2te_\obj$, which is nonzero by assumption, we have that $\dim(\cG(x,t)^\perp) \leq m$. As $k+1> m$ by assumption, we deduce that this system has a nontrivial solution. Thus, we conclude that \eqref{eq:qmp_R} is nontrivial if there exists a nonzero $y\in\R^r$ satisfying \eqref{eq:qmp_y}. 

It remains to construct $y$.
By definition of $\cS_\SDP$, there exists $Y\succeq 0$ such that
\begin{align}
\label{eq:system_in_lifted_space}
\begin{cases}
q_\obj(x) + \ip{A_\obj, Y} \leq 2t,\\
q_i(x) + \ip{A_i, Y} \leq 0,\,\forall i\in[m_I],\text{ and}\\
q_i(x) + \ip{A_i, Y} = 0 ,\,\forall i\in[m_I + 1, m].
\end{cases}
\end{align}
Without loss of generality, $Y = ({1\over k}I_k) \otimes \bb Y$.
As $(x,t)\notin\cS$, we have that $\bb Y\in\S^r_+\setminus\set{0}$ and we may pick a nonzero $y\in\R^r$ such that $yy^\top \preceq \bb Y$.
For notational convenience, let $\ell_\obj \coloneqq y^\top \bb A_\obj y$ and $\ell_i \coloneqq y^\top \bb A_i y$ for $i\in[m]$. 
Note that for any $(\gamma_\obj,\gamma)\in\Gamma$, we have $A(\gamma_\obj,\gamma)\succeq 0$, or equivalently $\bb A(\gamma_\obj,\gamma)\succeq 0$. Thus, $yy^\top \preceq \bb Y$ implies that 
$\ip{(\gamma_\obj, \gamma), (\ell_\obj,\ell)} 
=y^\top \bb A(\gamma_\obj,\gamma) y 
\leq \ip{\bb A(\gamma_\obj,\gamma),\bb Y}$. Also, from $Y = ({1\over k}I_k) \otimes \bb Y$ and the relation between the matrices $A_\obj,A_i$ and $\bb A_\obj, \bb A_i$, we have $\ip{\bb A(\gamma_\obj,\gamma),\bb Y}=\ip{A(\gamma_\obj,\gamma),Y}$. 
We deduce that for $(\gamma_\obj,\gamma)\in\Gamma$,
\begin{gather*}
\ip{\begin{pmatrix}
	\gamma_\obj\\
	\gamma
\end{pmatrix},q(x) - 2te_\obj + \begin{pmatrix}
	\ell_\obj\\\ell
\end{pmatrix}} \leq 
\ip{\begin{pmatrix}
	\gamma_\obj\\
	\gamma
\end{pmatrix},
q(x) - 2te_\obj + \begin{pmatrix}
	\ip{A_\obj, Y}\\
	\left(\ip{A_i, Y}\right)_i
\end{pmatrix}} \leq 0,\end{gather*}
where the last inequality follows from \eqref{eq:system_in_lifted_space} and $(\gamma_\obj,\gamma)\in\Gamma$.
This then shows that $q_\obj(x) - 2te_\obj + (\ell_\obj,\ell)\in\Gamma^\circ$. Moreover, because $\bb A(\gamma_\obj,\gamma)\succeq 0$, we have 
\begin{gather*}
0 \geq - y^\top \bb A(\gamma_\obj,\gamma) y 
= \ip{\begin{pmatrix}
	\gamma_\obj\\\gamma
\end{pmatrix}, \begin{pmatrix}
	-\ell_\obj\\
	-\ell
\end{pmatrix}},
\end{gather*}
which implies $-(\ell_\obj,\ell)\in\Gamma^\circ$. 
We have shown that $q_\obj(x) - 2te_\obj + (\ell_\obj,\ell)$ and $-(\ell_\obj,\ell)$ both lie in $\Gamma^\circ$. Then, as $q_\obj(x) -2te_\obj\in\rint(\cG(x,t))$, we deduce that $(\ell_\obj,\ell)\in\spann(\cG(x,t))$.
In particular, $y^\top \bb A(\gamma_\obj,\gamma) y = \ip{(\gamma_\obj, \gamma), (\ell_\obj,\ell)} = 0$ for all $(\gamma_\obj,\gamma)\in\cG(x,t)^\perp$.
\end{proof}

\begin{remark}
\label{rem:qmp}
SDP exactness in the context of QMPs was previously studied by \citet{wang2020tightness,beck2007quadratic,beck2012new}.
Specifically, \citet{beck2007quadratic} shows that objective value exactness holds whenever $k\geq m$ and \citet{wang2020tightness} show that convex hull exactness holds whenever $k \geq m + 2$.
\cref{prop:qmp} strengthens both of these results by showing that convex hull exactness holds whenever $k \geq m$.\mathprog{\qed}
\end{remark}

 \subsection{The partition problem}
\label{sec:partition_problem}
We next consider the partition QCQP and its SDP relaxation.
Recall the partition QCQP: Given $a\in\R^n$, we want to minimize
\begin{align*}
\Opt \coloneqq \min_{x\in\R^n}\set{(a^\top x)^2:\, x_i^2 =1 ,\,\forall i\in[n]}.
\end{align*}
Note that $\Opt = 0$ if and only if the vector $a$ can be \emph{partitioned} into two sets of equal weight.
Thus, deciding whether $\Opt = 0$ is NP-hard \cite{karp1972reducibility}.
In this section, we will first give an explicit description of $\cS_\SDP$ under a minor assumption. This explicit $\cS_\SDP$ description will then let us conclude that $\conv(\cS)\neq \cS_\SDP$ under the same minor assumption.

\begin{assumption}
\label{as:partition_positive_weights}
	$a\in\R^n_{++}$ and $n\geq 2$.
\end{assumption}
\begin{remark}
\cref{as:partition_positive_weights} is essentially without loss of generality: It is straightforward to derive a closed form description of $\cS_\SDP$ when $n = 1$. 
Similarly, one can relate $\cS_\SDP$ corresponding to an arbitrary $a\in\R^n$ with the set $\cS_\SDP$ corresponding to some $a'\in\R^{n'}_{++}$ for $n'\coloneqq \abs{\set{i\in[n]:\,a_i\neq0}}$.\mathprog{\qed}
\end{remark}

\begin{restatable}{proposition}{lempartitionexplicit}
	\label{lem:partition_explicit}
	Suppose \cref{as:partition_positive_weights} holds. Then,
	\begin{gather*}
	\Gamma = \set{(\gamma_\obj,\gamma)\in\R\times\R^n:\, \begin{array}
		{l}
		\gamma_\obj aa^\top + \Diag(\gamma)\succeq 0\\
		\gamma_\obj \geq 0
	\end{array}}, \text{ and}\\
	\cS_\SDP = \set{(x,t)\in[-1,1]^n\times\R:~
	(a^\top x)^2 + \max_{i\in[n]} \left(a_i\sqrt{1-x_i^2} - \sum_{j\neq i}a_j\sqrt{1-x_j^2}\right)_+^2 \leq 2t
	} .
	\end{gather*}
\end{restatable}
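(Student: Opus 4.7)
The plan is to derive both characterizations directly from the definitions. The description of $\Gamma$ is immediate from \cref{def:convex_lagrange_mulipliers}: here $A_\obj = aa^\top$ and $A_i = e_i e_i^\top$ for $i\in[n]$, so $A(\gamma_\obj,\gamma) = \gamma_\obj aa^\top + \Diag(\gamma)$, and since all $n$ constraints are equalities, the only sign restriction imposed is $\gamma_\obj\geq 0$.

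For $\cS_\SDP$, I would apply \cref{lem:S_SDP} to write
\begin{align*}
\cS_\SDP = \set{(x,t)\in\R^{n+1}:\, (a^\top x)^2 + \phi(x) \leq 2t},\qquad \phi(x) \coloneqq \sup_{\gamma\in\Gamma_P}\gamma^\top(x^2 - \mathbf{1}),
\end{align*}
where $x^2\coloneqq (x_1^2,\dots,x_n^2)$. First I would observe that if some $x_k^2 > 1$, then $\gamma = \tau e_k$ remains in $\Gamma_P$ for all $\tau\geq 0$ (since $aa^\top + \tau e_k e_k^\top\succeq 0$) and $\phi(x)=+\infty$; hence every $(x,t)\in\cS_\SDP$ satisfies $x\in[-1,1]^n$.

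For such $x$, set $y\coloneqq \mathbf{1}-x^2\in\R^n_+$. The Slater condition for the supremum defining $\phi(x)$ holds (e.g., $\gamma=\tau\mathbf{1}$ for large $\tau$ yields $aa^\top+\Diag(\gamma)\succ 0$), so standard conic SDP duality gives
\begin{align*}
\phi(x) = \inf_{Z\in\S^n_+}\set{a^\top Z a :\, \diag(Z) = y}.
\end{align*}
The core calculation is to evaluate this infimum. Writing a feasible $Z$ as a Gram matrix $Z = VV^\top$ whose rows $v_i^\top$ satisfy $\norm{v_i}^2 = y_i$, the infimum becomes $\min \norm{\sum_i a_i v_i}^2$ subject to $\norm{v_i}^2=y_i$; this is the classical ``close the polygon'' problem with side lengths $r_i\coloneqq a_i\sqrt{y_i}$, and its value equals $\max_i(r_i - \sum_{j\neq i}r_j)_+^2$.

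The one substantive obstacle is justifying this formula for the polygon problem. The lower bound follows by applying the reverse triangle inequality at each index $i^*$. For the matching upper bound, I would split into cases: if some $r_{i^*}$ strictly exceeds $\sum_{j\neq i^*}r_j$, place all the $a_i v_i$ collinearly with appropriate signs; otherwise every $r_i\leq\sum_{j\neq i}r_j$, and the hypothesis $n\geq 2$ provides at least two ambient dimensions in which to arrange the $a_i v_i$ as edges of a closed polygon (via a standard induction on the number of sides). Combining the two cases gives $\phi(x) = \max_i\left(a_i\sqrt{1-x_i^2} - \sum_{j\neq i}a_j\sqrt{1-x_j^2}\right)_+^2$, which matches the claimed description of $\cS_\SDP$.
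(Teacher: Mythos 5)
Your proposal is correct, but it reaches the formula for $\sup_{\gamma\in\Gamma_P}[\gamma,q(x)]$ by a genuinely different route than the paper. The paper works entirely on the dual side: it partitions $\Gamma_P$ by sign pattern (eigenvalue interlacing rules out two negative coordinates, and the Schur complement together with the Sherman--Morrison formula gives an explicit description of each piece $\cN_i=\Gamma_P\cap\set{\gamma_i<0,\ \gamma_j\ge 0\ \forall j\ne i}$), and then evaluates $\sup_{\gamma\in\cN_i}[\gamma,q(x)]$ directly via a Cauchy--Schwarz identity and a one-variable optimization. You instead dualize once more: strong conic duality (Slater via $\gamma=\tau\mathbf{1}$) turns $\sup_{\gamma\in\Gamma_P}\gamma^\top(x^2-\mathbf{1})$ into $\inf\set{a^\top Za:\,Z\succeq 0,\ \diag(Z)=\mathbf{1}-x^2}$, and a Gram factorization reduces this to the classical polygon-closing problem with side lengths $r_i=a_i\sqrt{1-x_i^2}$, whose value $\max_i(r_i-\sum_{j\ne i}r_j)_+^2$ is exactly the claimed expression (the reverse triangle inequality gives the lower bound; $n\ge 2$ ambient dimensions suffice to close the polygon when no side dominates, and collinear placement handles the dominated case). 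Both arguments use $a\in\R^n_{++}$ and $n\ge 2$ in essential ways. Your route is shorter and more conceptual, and the polygon lemma is standard; the paper's route is more computational but yields as a byproduct an explicit description of $\Gamma_P$ itself, which fits its broader theme that exactness questions become tractable once $\Gamma$ is understood. The only small points to tidy up in a full write-up are: (i) note that \cref{as:definite} holds (e.g.\ $\gamma^*=\mathbf{1}$) so that \cref{lem:S_SDP} applies, and (ii) include the short induction for the polygon-closing fact, since the paper does not supply it.
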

See \cref{sec:deferred_proofs_conv_appl} for a proof of this statement.

Recall from \cite{laurent1995positive} that a vector $a\in\R^n_{++}$ is said to be \emph{balanced} if for all $i\in[n]$, $a_i \leq \sum_{j\neq i} a_j$.
The following result then follows as a corollary to \cref{lem:partition_explicit}. (See \cref{sec:deferred_proofs_conv_appl}.)
\begin{restatable}
	{corollary}{partitionsdpzero}\label{cor:partition_sdp=0}
	Suppose \cref{as:partition_positive_weights} holds. Then, $\Opt_\textup{SDP} = 0$ if and only if $a$ is balanced.
\end{restatable}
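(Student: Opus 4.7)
The plan is to exploit the explicit description of $\cS_\SDP$ given in \cref{lem:partition_explicit}. Because the QCQP objective $(a^\top x)^2$ is nonnegative, so is $\Opt_\SDP$, and $\cS_\SDP$ is closed (\cref{cor:SSDP_closed}), hence $\Opt_\SDP = 0$ is equivalent to the existence of a point $x\in[-1,1]^n$ satisfying
(a) $a^\top x = 0$, and
(b) $a_i\sqrt{1-x_i^2} \leq \sum_{j\neq i} a_j\sqrt{1-x_j^2}$ for every $i\in[n]$.
With this reformulation in hand, the corollary becomes an elementary statement about real vectors.

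For the ``if'' direction, assuming $a$ is balanced, I would simply exhibit the witness $x=0_n\in[-1,1]^n$: condition (a) is immediate, and since $\sqrt{1-x_i^2}=1$ for all $i$, condition (b) reduces to the balancedness inequality $a_i\leq \sum_{j\neq i}a_j$.

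For the ``only if'' direction, I would argue by contradiction. Assume some $x\in[-1,1]^n$ satisfies (a) and (b) but, for some $i^*\in[n]$, $a_{i^*} > \sum_{j\neq i^*}a_j$. From $a\in\R^n_{++}$ and (a), rewriting $a_{i^*}x_{i^*} = -\sum_{j\neq i^*}a_j x_j$ and taking absolute values yields
\[
a_{i^*}|x_{i^*}|\;\leq\;\sum_{j\neq i^*} a_j|x_j|.
\]
Combining this with (b) at $i=i^*$ gives the componentwise inequality in $\R^2_+$
\[
a_{i^*}\bigl(|x_{i^*}|,\sqrt{1-x_{i^*}^2}\bigr) \;\leq\; \sum_{j\neq i^*} a_j\bigl(|x_j|,\sqrt{1-x_j^2}\bigr).
\]
Each vector $u_j\coloneqq (|x_j|,\sqrt{1-x_j^2})$ is a unit vector in $\R^2_+$, so taking Euclidean norms (which are monotone on $\R^2_+$) and applying the triangle inequality yields $a_{i^*}=\|a_{i^*}u_{i^*}\|_2\leq \sum_{j\neq i^*}a_j\|u_j\|_2 = \sum_{j\neq i^*}a_j$, a contradiction.

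The main obstacle I anticipate is the ``only if'' direction: separately, neither the linear constraint (a) nor the nonlinear constraints (b) force balancedness, and the two must be combined in the right way. The key idea that makes this painless is to package the ``$x$-part'' and the ``$\sqrt{1-x^2}$-part'' into a single unit vector in $\R^2_+$ and then invoke monotonicity of the $2$-norm on the nonnegative orthant together with the triangle inequality in a single stroke.
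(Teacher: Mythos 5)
Your proof is correct, but it takes a genuinely different route from the paper's. The paper never works with a general feasible $x$: it observes that for the partition QCQP each aggregated function $[\gamma,q(\cdot)]$ is convex and even (since $q(x)=q(-x)$), so the inner infimum in $\Opt_\SDP=\inf_x\sup_{\gamma\in\Gamma_P}[\gamma,q(x)]$ (\cref{cor:opt_sdp_saddle}) is attained at $x=0$; plugging $x=0$ into the description from \cref{lem:partition_explicit} then yields the closed form $\Opt_\SDP=\max_{i}\bigl(a_i-\sum_{j\neq i}a_j\bigr)_+^2$, from which the corollary is immediate. You instead keep $x$ general, characterize $\Opt_\SDP=0$ as the solvability of the system $a^\top x=0$, $a_i\sqrt{1-x_i^2}\leq\sum_{j\neq i}a_j\sqrt{1-x_j^2}$ over $[-1,1]^n$ (attainment of the infimum is fine here, since the objective in the description of $\cS_\SDP$ is continuous on the compact set $[-1,1]^n$), and then rule out unbalanced $a$ by packaging $\bigl(|x_j|,\sqrt{1-x_j^2}\bigr)$ into unit vectors of $\R^2_+$ and using monotonicity of the norm on the nonnegative orthant plus the triangle inequality. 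Both arguments are sound; the paper's symmetry reduction is shorter and buys more (an explicit formula for $\Opt_\SDP$, not just the zero/nonzero dichotomy), while yours is self-contained at the level of the feasibility system and the $\R^2_+$ trick is a nice touch. One cosmetic point: your opening claim that $\Opt_\SDP\geq 0$ ``because the QCQP objective is nonnegative'' is not a valid inference for a relaxation in general; it is, however, immediate from the explicit description of $\cS_\SDP$ you invoke, so nothing is lost.
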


As a consequence of \cref{cor:partition_sdp=0} (and the NP-hardness of deciding whether $\Opt = 0$ for the partition QCQP), we see that it is NP-hard to decide whether objective value exactness holds for the partition QCQP. This recovers a result due to \citet{laurent1995positive}.

In contrast to the NP-hardness of checking \textit{objective value exactness} for the partition QCQP, the following corollary states that checking \textit{convex hull exactness} for the partition QCQP is a trivial task.

\begin{restatable}
	{corollary}{thmpartitionconvneq}
	\label{thm:partition_conv_neq}
	Suppose \cref{as:partition_positive_weights} holds. Then, $\conv(\cS)\neq\cS_\SDP$.
\end{restatable}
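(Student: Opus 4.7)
The plan is to prove $\conv(\cS)\neq\cS_\SDP$ by showing that $\cS_\SDP$ is not polyhedral, whereas $\conv(\cS)$ always is. Since $\cS$ is the union of the $2^n$ rays $\{v\}\times[(a^\top v)^2/2,\infty)$ indexed by $v\in\{\pm 1\}^n$, the set $\conv(\cS)$ equals the convex hull of the $2^n$ base points $(v,(a^\top v)^2/2)$ plus the recession direction $\{0\}\times\R_+$, and so is finitely generated (polyhedral). Consequently, exhibiting a single two-dimensional affine slice of $\cS_\SDP$ whose boundary is not piecewise linear will suffice.

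I would restrict to the diagonal line $L\coloneqq \{\epsilon\mathbf{1}:\epsilon\in\R\}\subseteq\R^n$ and use the explicit description of $\cS_\SDP$ from \cref{lem:partition_explicit}. Writing $S\coloneqq\sum_i a_i$, at $x=\epsilon\mathbf{1}$ we have $\sqrt{1-x_i^2}=\sqrt{1-\epsilon^2}$ uniformly in $i$, hence
\[
a_i\sqrt{1-x_i^2}-\sum_{j\neq i}a_j\sqrt{1-x_j^2} \;=\; (2a_i - S)\sqrt{1-\epsilon^2} \quad\text{for each } i\in[n].
\]
Setting $c_+\coloneqq\max\{0,\,2\max_i a_i - S\}$, the maximum over $i$ of the positive parts squared equals $c_+^2(1-\epsilon^2)$, and $(a^\top(\epsilon\mathbf{1}))^2=\epsilon^2 S^2$, so the slice becomes
\[
\cS_\SDP\cap(L\times\R) \;=\; \{(\epsilon\mathbf{1},t):\, \epsilon\in[-1,1],\ 2t \geq c_+^2 + \epsilon^2(S^2-c_+^2)\}.
\]

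Under \cref{as:partition_positive_weights}, the hypotheses $a\in\R^n_{++}$ and $n\geq 2$ force $\max_i a_i < S$, hence $c_+ < S$, hence $S^2-c_+^2>0$. Thus the lower boundary of the slice, $t=(c_+^2+\epsilon^2(S^2-c_+^2))/2$, is a strict parabola in $\epsilon$ on $[-1,1]$, and in particular fails to be piecewise linear on any subinterval. If $\cS_\SDP$ were polyhedral, its intersection with the affine subspace $L\times\R$ would be a polyhedron in $\R^2$ (with recession direction $(0,1)$) whose lower envelope in $\epsilon$ is piecewise linear, contradicting the parabolic boundary just computed. Hence $\conv(\cS)\neq\cS_\SDP$.

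The only step that genuinely uses \cref{as:partition_positive_weights} is the strict inequality $S^2-c_+^2>0$, which is precisely what ensures the parabola is nondegenerate; everything else is a direct application of the explicit formula for $\cS_\SDP$ already established in \cref{lem:partition_explicit}. I expect this strict inequality to be the main substantive verification in the write-up.
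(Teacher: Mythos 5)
Your proposal is correct and follows essentially the same strategy as the paper's proof: both establish that $\conv(\cS)$ is polyhedral while $\cS_\SDP$ is not, using the explicit description from \cref{lem:partition_explicit}. The only difference is cosmetic --- the paper exhibits non-polyhedrality by restricting to an open set $U\subseteq[-1,1]^n$ on which the defining function of $\cS_\SDP$ is smooth with nonvanishing second derivative, whereas you restrict to the diagonal line $\set{\epsilon\mathbf{1}}$ and obtain an explicit nondegenerate parabola (your verification that $S^2-c_+^2>0$ under \cref{as:partition_positive_weights} is exactly the needed nondegeneracy check).
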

The proof of \cref{thm:partition_conv_neq} follows from the observation that $\conv(\cS)$ is polyhedral and that $\cS_\SDP$ is not polyhedral. See \cref{sec:deferred_proofs_conv_appl} for details.

\section{Objective value exactness}
\label{sec:obj_val_exactness}

In this section, we present sufficient conditions for objective value exactness, i.e., the property that $\Opt = \Opt_\SDP$.
In fact, all of our sufficient conditions imply the stronger condition, which we refer to as \emph{optimizer exactness}, that the optimizers of the QCQP and its SDP relaxation coincide, i.e.,
\begin{align*}
\argmin_{(x,t)\in\cS_\SDP} 2t
=
\argmin_{(x,t)\in\cS} 2t.
\end{align*}
We begin by presenting sufficient conditions stemming from a primal analysis. These sufficient conditions generalize \cite[Theorem 3]{wang2020tightness}. Our second set of sufficient conditions are based on a dual analysis and require the additional assumption that the dual optimum is achieved. These conditions imply further that the optimizers are unique.

\subsection{Sufficient conditions based on a primal analysis}
\label{subsec:obj_primal}

We begin by presenting a very general sufficient condition for optimizer exactness.
\begin{theorem}
\label{thm:suff_obj_primal}
Suppose \cref{as:definite} holds. Furthermore, suppose that for all $(x,t)\in\cS_\SDP\setminus \cS$, there exists closed cones $K_1,K_2\subseteq \R^{1+m}$ and $(x',t')\in\R^{n+1}$ satisfying
\begin{align}
\label{eq:suff_cond_obj_primal}
\begin{cases}
	K_1\subseteq (q(x) - 2te_\obj)^\perp\\
	-K_2 \cap (q(x) - 2te_\obj)^\circ = \set{0}\\
	K_1 + K_2 \supseteq \Gamma\\
	(x')^\top A(\gamma_\obj, \gamma) x' = 0 ,~\forall (\gamma_\obj,\gamma)\in K_1\\
	\ip{A(\gamma_\obj,\gamma)x + b(\gamma_\obj,\gamma), x'} - \gamma_\obj t' \leq 0,~\forall (\gamma_\obj,\gamma)\in K_1\\
	t' <0
\end{cases} .
\end{align}
Then, optimizer exactness holds, i.e., $\argmin_{(x,t)\in\cS_\SDP} 2t = \argmin_{(x,t)\in\cS} 2t$.
\end{theorem}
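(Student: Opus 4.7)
The plan is to argue by contradiction: suppose that some $(x^*, t^*) \in \argmin_{(x,t) \in \cS_\SDP} 2t$ fails to lie in $\cS$. Then $(x^*, t^*) \in \cS_\SDP \setminus \cS$, so the hypothesis produces closed cones $K_1, K_2$ and a direction $(x', t')$ with $t' < 0$. I will then show that $(x^* + \alpha x', t^* + \alpha t') \in \cS_\SDP$ for some $\alpha > 0$, which strictly decreases the objective and contradicts optimality. Having ruled out optimizers outside $\cS$, the equality of $\argmin_{\cS_\SDP} 2t$ and $\argmin_\cS 2t$ is routine from $\cS \subseteq \cS_\SDP$ together with the resulting equality $\Opt = \Opt_\SDP$.

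By \cref{lem:S_SDP}, membership in $\cS_\SDP$ reduces to showing $v(\alpha) \coloneqq q(x^* + \alpha x') - 2(t^* + \alpha t')e_\obj \in \Gamma^\circ$. Since $K_1 + K_2 \supseteq \Gamma$, we have $K_1^\circ \cap K_2^\circ = (K_1+K_2)^\circ \subseteq \Gamma^\circ$, so it suffices to verify $v(\alpha) \in K_1^\circ \cap K_2^\circ$. For any $(\gamma_\obj, \gamma) \in \R^{1+m}$, expanding yields
\[
\ip{(\gamma_\obj,\gamma),\, v(\alpha)} = \ip{(\gamma_\obj,\gamma),\, v(0)} + 2\alpha\bigl(\ip{A(\gamma_\obj,\gamma)x^* + b(\gamma_\obj,\gamma),\, x'} - \gamma_\obj t'\bigr) + \alpha^2 (x')^\top A(\gamma_\obj,\gamma) x'.
\]
Membership in $K_1^\circ$ for every $\alpha \geq 0$ then follows immediately from the four conditions on $K_1$: the constant term vanishes by $K_1 \subseteq v(0)^\perp$, the $\alpha$-linear term is nonpositive by the fifth hypothesis in \eqref{eq:suff_cond_obj_primal}, and the $\alpha^2$-term is zero by the fourth.

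Membership in $K_2^\circ$ for small $\alpha > 0$ is the main technical step. The condition $-K_2 \cap (v(0))^\circ = \{0\}$ says that every nonzero $k_2 \in K_2$ satisfies $\ip{k_2, v(0)} < 0$. Since $K_2$ is a closed cone, a compactness argument on $K_2 \cap \mathbb{S}^{m}$ produces a uniform constant $c > 0$ with $\ip{k_2, v(0)} \leq -c\norm{k_2}$ for every $k_2 \in K_2$. Meanwhile, the linear and quadratic correction terms are each linear in $k_2$, so each is bounded in magnitude by $C\norm{k_2}$ for some $C$ depending only on $x^*, x', t'$ and the problem data. Choosing $\alpha > 0$ so that $2C\alpha + C\alpha^2 < c$ gives $\ip{k_2, v(\alpha)} \leq 0$ for every $k_2 \in K_2$, hence $v(\alpha) \in K_2^\circ$. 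Combining this with the $K_1^\circ$ analysis yields $v(\alpha) \in \Gamma^\circ$, so $(x^* + \alpha x', t^* + \alpha t') \in \cS_\SDP$, and the strict inequality $t' < 0$ contradicts optimality of $(x^*, t^*)$.

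The one step that needs care is the production of the separation constant $c$ from the condition $-K_2 \cap (v(0))^\circ = \{0\}$; everything else is bookkeeping on the quadratic expansion of $\ip{(\gamma_\obj,\gamma), v(\alpha)}$. I expect this compactness argument to be the main obstacle only in the sense that it is the one place where the closedness of $K_2$ is essential (and where one cannot simply reuse the algebraic identities from \cref{lem:R'_if_epigraph_tight}); the remainder of the argument follows the same template as the sufficient condition for convex hull exactness in \cref{thm:conv_hull_sufficient}, with $K_2$ playing the role of a ``strict'' complementarity certificate that upgrades the bidirectional rounding analysis to a unidirectional improvement step.
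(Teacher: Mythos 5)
Your proposal is correct and follows essentially the same route as the paper's proof: both verify that $q(x^*+\alpha x') - 2(t^*+\alpha t')e_\obj$ pairs nonpositively with $K_1$ for all small $\alpha\geq 0$ via the quadratic expansion, and with $K_2$ for small $\alpha>0$ via compactness of $K_2\cap\bS^{m}$ together with strict negativity at $\alpha=0$, then combine using $K_1+K_2\supseteq\Gamma$. The paper phrases the $K_2$ step as continuity of $\alpha\mapsto\max_{\cB}\ip{\cdot,v(\alpha)}$ over the unit-sphere slice $\cB=K_2\cap\bS^{m}$ rather than extracting an explicit separation constant $c$, but this is only a cosmetic difference.
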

\begin{proof}
Let $(x,t)\in\cS_\SDP\setminus\cS$. It suffices to show that $(x,t)\notin\argmin_{(x,t)\in\cS_\SDP}2t$. Let $K_1,K_2,x',t'$ denote the quantities furnished by the assumption.

We claim that for all $\alpha >0$ small enough, $(x+\alpha x',t+\alpha t')\in\cS_\SDP$. Indeed, for all $\alpha>0$ small enough and $(\gamma_\obj,\gamma)\in K_1$, 
\begin{align*}
&\ip{(\gamma_\obj,\gamma), q(x+\alpha x') - 2 (t+\alpha t')e_\obj}\\
&\qquad = \alpha^2 \underbrace{(x')^\top A(\gamma_\obj,\gamma) x'}_{=0} +
2\alpha \underbrace{\left(\ip{A(\gamma_\obj,\gamma)x + b(\gamma_\obj,\gamma), x'} - \gamma_\obj t'\right)}_{\leq0}\\
&\qquad\quad +\underbrace{\ip{(\gamma_\obj,\gamma), q(x) -2te_\obj}}_{=0}\\
&\qquad \leq 0.
\end{align*}

Next, set $\cB\coloneqq K_2\cap \bS^{(1+m)-1}$ so that $\cone(\cB) = K_2$. By definition of $K_2$ and $\cB$, we have $-\cB\cap (q(x) - 2te_\obj)^\circ = \emptyset$ so that the map
\begin{align*}
\alpha \mapsto \max_{(\gamma_\obj,\gamma)\in\cB} \ip{(\gamma_\obj,\gamma), q(x+\alpha x') - 2(t+\alpha t')e_\obj}
\end{align*}
is negative at $\alpha = 0$. Note also that this map is a continuous function of $\alpha$. Then, by continuity, this map is negative for some $\alpha>0$.

Finally, by linearity and the fact that $K_1 + K_2\supseteq \Gamma$, we deduce that $(x+\alpha x',t+\alpha t')\in\cS_\SDP$ for some $\alpha>0$. This shows $(x,t)\notin\argmin_{(x,t)\in\cS_\SDP}2t$.
\end{proof}

We next recover more concrete sufficient conditions by picking $K_1$ and $K_2$ appropriately. The following corollary recovers the sufficient condition for objective value exactness (in the setting of polyhedral $\Gamma$) presented in \cite[Theorem 3]{wang2020tightness}.
\begin{corollary}
\label{cor:polyhedral_suff_primal_obj}
Suppose \cref{as:definite} holds and that $\Gamma$ is polyhedral. Furthermore, suppose that for all $(x,t)\in\cS_\SDP\setminus\cS$, there exists $(x',t')\in\R^{n+1}$ satisfying
\begin{align}
\label{eq:polyhedral_suff_primal_obj}
\begin{cases}
	(x')^\top A(\gamma_\obj,\gamma) x' = 0,~\forall (\gamma_\obj,\gamma)\in\cF(x,t)\\
	\ip{b(\gamma_\obj,\gamma),x'} - \gamma_\obj t'\leq 0,~\forall (\gamma_\obj,\gamma)\in\cF(x,t)\\
	t'<0
\end{cases}.
\end{align}
Then, $\argmin_{(x,t)\in\cS}2t = \argmin_{(x,t)\in\cS_\SDP}2t$.
\end{corollary}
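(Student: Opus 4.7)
The plan is to derive this corollary as a direct application of \cref{thm:suff_obj_primal} by making a principled choice of the cones $K_1$ and $K_2$ that exploits the polyhedrality of $\Gamma$. Fix $(x,t)\in\cS_\SDP\setminus\cS$ and let $(x',t')$ be the vector furnished by the corollary's hypothesis. Set $K_1 \coloneqq \cF(x,t) = \Gamma\cap (q(x)-2te_\obj)^\perp$, and set $K_2$ to be the conic hull of those extreme rays of $\Gamma$ not contained in $\cF(x,t)$. Because $\Gamma$ is polyhedral, it is the conic hull of its finitely many extreme rays, and because $\cF(x,t)$ is a face, its extreme rays are exactly those extreme rays of $\Gamma$ lying in $\cF(x,t)$. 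This partition immediately gives $K_1 + K_2 \supseteq \Gamma$, and $K_1 \subseteq (q(x)-2te_\obj)^\perp$ is the definition of $\cF(x,t)$.

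Next, I would verify $-K_2 \cap (q(x)-2te_\obj)^\circ = \set{0}$. Any nonzero $\gamma \in K_2$ has the form $\sum_i c_i r_i$ with $c_i \geq 0$ not all zero, where each $r_i$ is an extreme ray of $\Gamma$ not in $\cF(x,t)$. Since $q(x)-2te_\obj \in \Gamma^\circ$ we have $\ip{r_i, q(x)-2te_\obj} \leq 0$, and since $r_i \notin \cF(x,t) = \Gamma\cap (q(x)-2te_\obj)^\perp$ the inequality is strict. Summing gives $\ip{\gamma, q(x)-2te_\obj} < 0$, so $-\gamma$ lies strictly outside the half-space $(q(x)-2te_\obj)^\circ$, as required.

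The remaining step is to translate the corollary's hypothesis on $(x',t')$ into the two inequality conditions on $K_1$ in \eqref{eq:suff_cond_obj_primal}. The equality $(x')^\top A(\gamma_\obj,\gamma) x' = 0$ for all $(\gamma_\obj,\gamma) \in \cF(x,t)$ is identical. The key observation is that for $(\gamma_\obj,\gamma)\in\cF(x,t)\subseteq \Gamma$ we have $A(\gamma_\obj,\gamma)\succeq 0$, and combined with $(x')^\top A(\gamma_\obj,\gamma) x' = 0$ this forces $A(\gamma_\obj,\gamma)x' = 0$ (the standard PSD fact). Therefore $\ip{A(\gamma_\obj,\gamma)x + b(\gamma_\obj,\gamma), x'} = \ip{x, A(\gamma_\obj,\gamma) x'} + \ip{b(\gamma_\obj,\gamma), x'} = \ip{b(\gamma_\obj,\gamma), x'}$, so the corollary's assumed inequality $\ip{b(\gamma_\obj,\gamma), x'} - \gamma_\obj t' \leq 0$ is equivalent to the theorem's. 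Together with $t'<0$, all hypotheses of \cref{thm:suff_obj_primal} are met, and optimizer exactness follows.

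There is no substantive obstacle here; this is essentially a bookkeeping argument. The only subtlety worth emphasizing is the PSD-kernel trick that lets one absorb the $A(\gamma_\obj,\gamma)x$ term into the quadratic equality constraint on $x'$, which is precisely what makes the corollary's hypothesis look weaker than that of \cref{thm:suff_obj_primal} despite being equivalent after the reduction.
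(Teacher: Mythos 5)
Your proof takes essentially the same route as the paper's: both instantiate \cref{thm:suff_obj_primal} with $K_1=\cF(x,t)$ and $K_2$ generated by the remaining generators of $\Gamma$, and both use the fact that $M\succeq 0$ together with $(x')^\top M x'=0$ forces $Mx'=0$ in order to absorb the $\ip{A(\gamma_\obj,\gamma)x,x'}$ term into the hypotheses of \eqref{eq:polyhedral_suff_primal_obj}. One small imprecision: you justify $K_1+K_2\supseteq\Gamma$ by asserting that a polyhedral cone is the conic hull of its extreme rays. That fails when $\Gamma$ contains a line, which the paper's setup does permit---the multipliers of the equality constraints are unrestricted in sign, so any nontrivial linear dependence among $A_{m_I+1},\dots,A_m$ (e.g.\ a constraint with $A_i=0$) puts a line in $\Gamma$, and \cref{as:definite} does not rule this out. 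The paper sidesteps the issue by taking an arbitrary finite Minkowski--Weyl generating set and splitting it according to whether each generator pairs to zero or strictly negatively with $q(x)-2te_\obj\in\Gamma^\circ$; a generator lying on a line of $\Gamma$ automatically pairs to zero and hence lands in $K_1$, and the identity $K_1=\cF(x,t)$ still holds because any $\gamma\in\cF(x,t)$ written as a nonnegative combination of generators can only use generators that pair to zero. Replacing ``extreme rays'' by ``generators'' in your argument repairs this, and the rest of your proof, including the verification that $-K_2\cap(q(x)-2te_\obj)^\circ=\set{0}$ and the PSD-kernel reduction, goes through verbatim.
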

\begin{proof}
Let $(x,t)\in\cS_\SDP\setminus\cS$. Since $\Gamma$ is polyhedral, we can write $\Gamma\coloneqq\cone\set{(\gamma_\obj^{(i)}, \gamma^{(i)})}_{i\in [T]}$ for a finite set of generators.
Take,
\begin{align*}
K_1 = \cone\set{(\gamma_\obj^{(i)},\gamma^{(i)}):\, \ip{(\gamma_\obj^{(i)},\gamma^{(i)}), q(x) - 2t e_\obj} = 0} = \cF(x,t)
\end{align*}
and
\begin{align*}
K_2 &= \cone\set{(\gamma_\obj^{(i)},\gamma^{(i)}):\, \ip{(\gamma_\obj^{(i)},\gamma^{(i)}), q(x) - 2t e_\obj} < 0}.
\end{align*}
Note that $K_1$ and $K_2$ are polyhedral and thus closed. Moreover, the first three requirements of \eqref{eq:suff_cond_obj_primal} are satisfied for this choice of $K_1$ and $K_2$. Moreover, note that for every $(\gamma_\obj,\gamma)\in\cF(x,t)\subseteq\Gamma$ we have $A(\gamma_\obj,\gamma)\succeq0$ and for any $A\succeq0$, $x^\top Ax=0$ implies $Ax=0$. Thus, from $K_1=\cF(x,t)$, we deduce $\ip{A(\gamma_\obj,\gamma)x, x'}=0$ for every $(\gamma_\obj,\gamma)\in K_1$ so that the last three requirements of \eqref{eq:suff_cond_obj_primal} coincide with \eqref{eq:polyhedral_suff_primal_obj}.
\end{proof}

The following corollary derives a sufficient condition for objective value exactness without the assumption that $\Gamma$ is polyhedral. In words, this assumption supposes that for any $(x,t)\in\cS_\SDP\setminus\cS$, there exists a direction $(x',t')\in\R^{n+1}$ such that $q(x+\alpha x') - 2(t+\alpha t')e_\obj$ varies only along the line containing $q(x) - 2te_\obj$. In particular, by picking $\alpha$ appropriately, we can achieve $q(x+\alpha x') - 2(t+\alpha t')e_\obj = 0$. 

\begin{corollary}
\label{cor:one_shot_obj}
Suppose \cref{as:definite} holds. Furthermore, suppose that for all $(x,t)\in\cS_\SDP\setminus \cS$, there exists $(x',t')\in\R^{n+1}$ satisfying
\begin{align}\label{eq:one_shot_obj}
\begin{cases}
	(x')^\top A(\gamma_\obj, \gamma) x' = 0 ,~\forall (\gamma_\obj,\gamma)\in (q(x) - 2te_\obj)^\perp\\
	\ip{A(\gamma_\obj,\gamma)x + b(\gamma_\obj,\gamma), x'} - \gamma_\obj t' = 0,~\forall (\gamma_\obj,\gamma)\in (q(x) - 2te_\obj)^\perp\\
	t' <0
\end{cases} .
\end{align}
Then, $\argmin_{(x,t)\in\cS_\SDP} 2t = \argmin_{(x,t)\in\cS} 2t$.
\end{corollary}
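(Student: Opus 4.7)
My plan is to derive \cref{cor:one_shot_obj} as an immediate specialization of \cref{thm:suff_obj_primal}, by selecting closed cones $K_1,K_2\subseteq\R^{1+m}$ so that the abstract hypothesis \eqref{eq:suff_cond_obj_primal} reduces to the concrete system \eqref{eq:one_shot_obj}.

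First, I would fix an arbitrary $(x,t)\in\cS_\SDP\setminus\cS$ and record that $q(x)-2te_\obj$ is a nonzero element of $\Gamma^\circ$. Nonzeroness follows because $q(x)-2te_\obj=0$ would force $q_\obj(x)=2t$ and $q_i(x)=0$ for every $i\in[m]$, which would place $(x,t)$ in $\cS$, contradicting our choice; membership in $\Gamma^\circ$ is immediate from the third characterization of $\cS_\SDP$ in \cref{lem:S_SDP}. This lets me treat $q(x)-2te_\obj$ as a well-defined nonzero polar vector throughout the argument.

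Next, I would take
\[
K_1 \coloneqq (q(x)-2te_\obj)^\perp, \qquad K_2 \coloneqq \set{-\lambda(q(x)-2te_\obj):\,\lambda\geq 0}.
\]
Both are closed convex cones (the first being a linear subspace, the second a closed ray). I would then verify the three inclusion requirements of \eqref{eq:suff_cond_obj_primal} in turn. The inclusion $K_1\subseteq(q(x)-2te_\obj)^\perp$ holds by construction. For $-K_2\cap(q(x)-2te_\obj)^\circ=\set{0}$, any element of $-K_2$ has the form $\mu(q(x)-2te_\obj)$ with $\mu\geq 0$, and lying additionally in $(q(x)-2te_\obj)^\circ$ forces $\mu\,\norm{q(x)-2te_\obj}^2\leq 0$, hence $\mu=0$. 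For $K_1+K_2\supseteq\Gamma$, I would decompose any $w\in\Gamma$ orthogonally as $w=w_\perp+\alpha(q(x)-2te_\obj)$ with $w_\perp\in K_1$, and observe that $q(x)-2te_\obj\in\Gamma^\circ$ forces $\ip{w,q(x)-2te_\obj}=\alpha\,\norm{q(x)-2te_\obj}^2\leq 0$, giving $\alpha\leq 0$ and hence $\alpha(q(x)-2te_\obj)\in K_2$.

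Finally, for the $(x',t')$-dependent conditions, I would take the pair $(x',t')$ supplied by the hypothesis of \cref{cor:one_shot_obj}: the quadratic vanishing condition over $K_1=(q(x)-2te_\obj)^\perp$ is literally the first line of \eqref{eq:one_shot_obj}; the linear inequality over $K_1$ required by \eqref{eq:suff_cond_obj_primal} is implied by the equality in the second line of \eqref{eq:one_shot_obj}; and the strict inequality $t'<0$ is shared. Invoking \cref{thm:suff_obj_primal} then delivers optimizer exactness. I do not anticipate any real obstacle here: the only subtlety worth flagging in the writeup is that \eqref{eq:one_shot_obj} is genuinely \emph{stronger} than what \cref{thm:suff_obj_primal} needs (it imposes conditions on the full hyperplane $(q(x)-2te_\obj)^\perp$ rather than on a chosen $K_1\subseteq\Gamma$, and uses equality rather than inequality), and this strengthening is precisely what allows the hypothesis to be stated without explicit reference to a face of $\Gamma$, matching the ``one-shot'' nature of the corollary.
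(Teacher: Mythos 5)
Your proposal is correct and takes essentially the same route as the paper: it makes the identical choices $K_1=(q(x)-2te_\obj)^\perp$ and $K_2=-\cone(q(x)-2te_\obj)$ and invokes \cref{thm:suff_obj_primal}, merely spelling out the verifications the paper dismisses as easily observed. The details you supply (nonzeroness of $q(x)-2te_\obj$, the polar-cone computation for $-K_2$, and the orthogonal decomposition establishing $K_1+K_2\supseteq\Gamma$) are all accurate.
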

\begin{proof}
Take $K_1 = (q(x) - 2te_\obj))^\perp$ and $K_2 = -\cone(q(x) - 2te_\obj)$. Then, $K_1$ and $K_2$ are both closed convex cones and we can easily observe that the first three requirements in \eqref{eq:suff_cond_obj_primal} are automatically satisfied for this choice of $K_1$ and $K_2$. The last three requirements in \eqref{eq:suff_cond_obj_primal} coincide with \eqref{eq:one_shot_obj}.
\end{proof}

\subsection{Sufficient conditions based on a dual analysis}
\label{subsec:obj_dual}
Next, we give a strengthened sufficient condition for objective value exactness depending on a dual analysis.
To this end, we define the concave extended-real valued function $\mathbf{d}:\R^m\to\R\cup\set{-\infty}$ by
\begin{align*}
\mathbf{d}(\gamma)\coloneqq \inf_{x\in\R^n} [\gamma,q(x)].
\end{align*}
\begin{remark}
Recall here that by \cref{cor:opt_sdp_saddle}, we can write $\Opt_\SDP$ in the saddle-point form $\Opt_\SDP = \inf_{x\in\R^n}\sup_{\gamma\in\Gamma_p} [\gamma,q(x)]$ given in \eqref{eq:SDP_SP}. Whence, by coercivity~\cite[Proposition VI.2.3]{ekeland1999convex} we can also write 
$\Opt_\SDP = \sup_{\gamma\in\Gamma_p}\mathbf{d}(\gamma)$.\mathprog{\qed}
\end{remark}

The following theorem states that if $\mathbf{d}(\gamma)$ is maximized at a point $\gamma$ where $A[\gamma]\succ 0$ (e.g., on the interior of $\Gamma_P$), then optimizer exactness holds. This theorem can be interpreted as the observation that if the dual to \eqref{eq:sdp_relaxation_primal} in $\S^{n+1}$ has a rank-$n$ optimizer, then \eqref{eq:sdp_relaxation_primal} has a unique rank-$1$ solution. This is well-known and has been vastly explored in the literature. We state it as a theorem not because it is new or difficult to prove but because of its importance in deriving additional sufficient conditions (see \cref{cor:sufficiently_steep,cor:suff_obj_dual}).

\begin{theorem}
\label{lem:suff_obj_dual}
Suppose \cref{as:definite} holds and that $\sup_{\gamma\in\Gamma_P}\mathbf{d}(\gamma)$ is achieved at some $\gamma^*$ for which $A[\gamma^*]\succ 0$ (e.g., $\gamma^*\in\inter(\Gamma)$). Then, $\argmin_{(x,t)\in\cS}2t = \argmin_{(x,t)\in\cS_\SDP}2t$. Furthermore, the minimizers of these two optimization problems are unique.
\end{theorem}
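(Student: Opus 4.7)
My plan is to extract a candidate primal optimizer $x^*$ from the strictly convex Lagrangian $x\mapsto [\gamma^*,q(x)]$, then use first-order optimality of $\gamma^*$ in $\sup_{\gamma\in\Gamma_P}\mathbf{d}(\gamma)$ to check that $x^*$ is QCQP-feasible with complementary slackness, and finally sandwich the optimal values. First, since $A[\gamma^*]\succ 0$, the function $x\mapsto [\gamma^*,q(x)]$ is strictly convex and coercive, so it has a unique minimizer $x^*=-A[\gamma^*]^{-1}b[\gamma^*]$ and $\mathbf{d}(\gamma^*)=[\gamma^*,q(x^*)]$. By the remark preceding the theorem (which uses \cref{cor:opt_sdp_saddle} plus coercivity), $\Opt_\SDP=\sup_{\gamma\in\Gamma_P}\mathbf{d}(\gamma)=\mathbf{d}(\gamma^*)$.

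Next, I would verify feasibility of $x^*$ by analyzing first-order optimality of the concave maximization $\sup_{\gamma\in\Gamma_P}\mathbf{d}(\gamma)$. Because $A[\gamma^*]\succ0$, the minimizer $x(\gamma)=-A[\gamma]^{-1}b[\gamma]$ is locally smooth in $\gamma$ near $\gamma^*$, so by the envelope theorem $\nabla\mathbf{d}(\gamma^*)_i=q_i(x^*)$ for each $i\in[m]$. The semidefinite constraint $A[\gamma]\succeq0$ is inactive at $\gamma^*$, so the tangent cone to $\Gamma_P$ at $\gamma^*$ is $\{d\in\R^m:\, d_i\geq 0 \text{ for } i\in[m_I]\text{ with } \gamma_i^*=0\}$. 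First-order optimality $\ip{\nabla\mathbf{d}(\gamma^*),d}\leq 0$ along all tangent directions then yields: $q_i(x^*)\leq 0$ for $i\in[m_I]$ with $\gamma_i^*=0$; $q_i(x^*)=0$ for $i\in[m_I]$ with $\gamma_i^*>0$ (two-sided directions); and $q_i(x^*)=0$ for $i\in[m_I+1,m]$ (unrestricted directions). In particular $x^*$ is QCQP-feasible and complementary slackness $\gamma_i^* q_i(x^*)=0$ holds for all $i\in[m]$, so $[\gamma^*,q(x^*)]=q_\obj(x^*)$.

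Combining these observations gives the chain $\Opt_\SDP=\mathbf{d}(\gamma^*)=[\gamma^*,q(x^*)]=q_\obj(x^*)\geq\Opt\geq\Opt_\SDP$, so all inequalities are equalities and $(x^*,q_\obj(x^*)/2)\in\argmin_{(x,t)\in\cS}2t\subseteq\argmin_{(x,t)\in\cS_\SDP}2t$. For uniqueness, suppose $(x,t)\in\argmin_{(x,t)\in\cS_\SDP}2t$. Then $2t=\Opt_\SDP=\mathbf{d}(\gamma^*)$, while \cref{lem:S_SDP} gives $[\gamma^*,q(x)]\leq 2t=\mathbf{d}(\gamma^*)\leq [\gamma^*,q(x)]$, forcing $x$ to minimize $[\gamma^*,q(\cdot)]$. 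Strict convexity then forces $x=x^*$, and the equality $2t=[\gamma^*,q(x^*)]=q_\obj(x^*)$ pins down $t$. I expect the main obstacle to be making the envelope-theorem step fully rigorous (or equivalently, cleanly writing the KKT conditions for $\sup_{\gamma\in\Gamma_P}\mathbf{d}(\gamma)$ with the mixed polyhedral/PSD structure of $\Gamma_P$), since the rest is a routine weak-duality sandwich once complementary slackness is in hand.
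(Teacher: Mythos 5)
Your proposal is correct and is essentially the paper's argument run from the dual side: where the paper takes an arbitrary SDP optimizer and applies the Saddle Point Theorem to \eqref{eq:SDP_SP} to get $0=\grad_x[\gamma^*,q(x^*)]$ and the sign conditions $q_i(x^*)\leq 0$ (resp.\ $=0$) from the feasible directions $+e_i$ (resp.\ $\pm e_i$) at $\gamma^*$, you construct $x^*=-A[\gamma^*]^{-1}b[\gamma^*]$ first and obtain the identical conditions via the envelope theorem applied to $\mathbf{d}$, closing with a weak-duality sandwich. The first-order computations and the role of $A[\gamma^*]\succ 0$ (making the semidefinite constraint locally inactive and the Lagrangian strictly convex) are the same in both proofs; your version has the minor virtue of making complementary slackness, and hence the equality $[\gamma^*,q(x^*)]=q_\obj(x^*)$, explicit.
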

\begin{proof}
It suffices to show that $\min_{(x,t)\in\cS_\SDP}2t$ has a unique solution $(x^*,t^*)$ and that $(x^*,t^*)\in\cS$.
Let $(x^*,t^*)\in\argmin_{(x,t)\in\cS_\SDP}2t$
so that $x^*\in\argmin_x\sup_{\gamma\in\Gamma_P} [\gamma,q(x)]$ and $2t^* = \sup_{\gamma\in\Gamma_P}[\gamma,q(x^*)]$.
By the Saddle Point Theorem applied to \eqref{eq:SDP_SP}, we have
\begin{align*}
0_n = \grad_x [\gamma^*,q(x^*)] = 2\left(A[\gamma^*]x^* + b[\gamma^*]\right).
\end{align*}
Because $A[\gamma^*]\succ 0$, which implies that $x^* = -A[\gamma^*]^{-1}b[\gamma^*]$. This proves uniqueness of $(x^*,t^*)$.

Note that there exists $\alpha>0$ such that $[\gamma^*, \gamma^* + \alpha e_i]\subseteq \Gamma_P$ for all $i\in[m_I]$ and $[\gamma^*\pm \alpha  e_i]\subseteq\Gamma_P$ for all $i\in[m_I+1,m]$.
Then, by the Saddle Point Theorem we have
\begin{gather*}
q_i(x^*) = \grad_{\gamma_i}[\gamma^*,q(x^*)] \leq 0,\,\forall i\in[m_I],\text{ and}\\
q_i(x^*) = \grad_{\gamma_i}[\gamma^*,q(x^*)] = 0,\,\forall i\in[m_I+1,m].
\end{gather*}
We deduce that $q_\obj(x^*) \leq \sup_{\gamma\in\Gamma_P}[\gamma,q(x^*)] = 2t^*$. Hence, we conclude that $(x^*,t^*)\in\cS$.
\end{proof}

\begin{remark}\label{rem:inter-d_gamma}
Note that for any $\gamma$ for which $A[\gamma]\succ 0$, the dual function $\mathbf{d}(\gamma)$ is the sum of a linear function $c[\gamma]$ and a concave function $-b[\gamma]^\top A[\gamma]^{-1}b[\gamma]$, i.e.,
\begin{align*}
\mathbf{d}(\gamma) = -b[\gamma]^\top A[\gamma]^{-1}b[\gamma] + c[\gamma].
\end{align*}
We will use this structure to derive more concrete sufficient conditions ensuring that $\mathbf{d}(\gamma)$ is maximized at some point $\gamma^*$ for which $A[\gamma^*]\succ 0$. \mathprog{\qed}
\end{remark}

The following sufficient condition can be interpreted as requiring $-b[\gamma]^\top A[\gamma]^{-1}b[\gamma]$ (and hence also $\mathbf{d}(\gamma)$) to diverge to $-\infty$ as $\gamma$ approaches a point $\hat\gamma\in\Gamma_P$ for which $A[\hat\gamma]\not\succ 0$.
\begin{corollary}
\label{cor:suff_obj_dual}
	Suppose \cref{as:definite} holds and that $\sup_{\gamma\in\Gamma_P}\mathbf{d}(\gamma)$ is achieved. Furthermore, suppose that for all $\gamma\in\Gamma_P$, we have
	\begin{align*}
	A[\gamma]\not\succ 0 \quad\implies\quad \exists v\in\ker(A[\gamma])\text{ s.t. }  \ip{v,b[\gamma]\neq 0}.
	\end{align*}
	Then, $\argmin_{(x,t)\in\cS}2t = \argmin_{(x,t)\in\cS_\SDP}2t$. Furthermore, the minimizers of these two optimization problems are unique.
\end{corollary}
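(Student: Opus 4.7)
The plan is to reduce this corollary to \cref{lem:suff_obj_dual} by showing that the assumed maximizer $\gamma^*$ of $\mathbf{d}$ over $\Gamma_P$ must satisfy $A[\gamma^*]\succ 0$. Once this is established, \cref{lem:suff_obj_dual} delivers both optimizer exactness and uniqueness directly.

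First I would record that by \cref{as:definite} there exists $\gamma'\in\Gamma_P$ with $A[\gamma']\succ 0$. For such a $\gamma'$, the expression in \cref{rem:inter-d_gamma} gives $\mathbf{d}(\gamma') = -b[\gamma']^\top A[\gamma']^{-1}b[\gamma'] + c[\gamma']$, which is finite. Consequently, $\sup_{\gamma\in\Gamma_P}\mathbf{d}(\gamma) \geq \mathbf{d}(\gamma') > -\infty$, so any maximizer $\gamma^*$ must satisfy $\mathbf{d}(\gamma^*) > -\infty$.

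Next I would argue by contradiction: suppose $\gamma^*$ is a maximizer with $A[\gamma^*]\not\succ 0$. Then by hypothesis there exists $v\in\ker(A[\gamma^*])$ with $\ip{v,b[\gamma^*]}\neq 0$. Evaluating the aggregated quadratic along the ray $x = \alpha v$ gives
\[
[\gamma^*,q(\alpha v)] = \alpha^2 v^\top A[\gamma^*] v + 2\alpha\, b[\gamma^*]^\top v + c[\gamma^*] = 2\alpha\, b[\gamma^*]^\top v + c[\gamma^*],
\]
which is linear and nonconstant in $\alpha$ and hence unbounded below. Therefore $\mathbf{d}(\gamma^*) = \inf_{x\in\R^n}[\gamma^*,q(x)] = -\infty$, contradicting the previous paragraph.

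Thus $A[\gamma^*]\succ 0$, and \cref{lem:suff_obj_dual} applies to yield $\argmin_{(x,t)\in\cS}2t = \argmin_{(x,t)\in\cS_\SDP}2t$ with a unique minimizer in each case. I do not expect any genuine obstacle here; the only substantive step is the observation that a kernel vector of $A[\gamma^*]$ not orthogonal to $b[\gamma^*]$ drives $\mathbf{d}(\gamma^*)$ to $-\infty$, which is essentially the standard fact that minimizing a convex quadratic $x^\top M x + 2b^\top x$ with $M\succeq 0$ is bounded below if and only if $b\in\range(M)$.
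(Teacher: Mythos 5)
Your proposal is correct, and its overall skeleton matches the paper's: both reduce the corollary to \cref{lem:suff_obj_dual} by showing that any maximizer $\gamma^*$ of $\mathbf{d}$ over $\Gamma_P$ must satisfy $A[\gamma^*]\succ 0$, and both argue this by contradiction using the kernel vector $v$ furnished by the hypothesis. Where you diverge is in the mechanism of the contradiction. The paper takes a primal--dual route: it picks an optimizer $(x^*,t^*)$ of the SDP, invokes the Saddle Point Theorem for \eqref{eq:SDP_SP} to get $\grad_x[\gamma^*,q(x^*)]=0_n$, and then observes that pairing this with $v$ yields $2\ip{v,A[\gamma^*]x^*+b[\gamma^*]}=2\ip{v,b[\gamma^*]}\neq 0$, a contradiction. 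You instead work purely on the dual side: since $v\in\ker(A[\gamma^*])$ and $\ip{v,b[\gamma^*]}\neq 0$, the aggregated quadratic is linear and nonconstant along the ray $\alpha v$, so $\mathbf{d}(\gamma^*)=-\infty$, contradicting $\sup_{\gamma\in\Gamma_P}\mathbf{d}(\gamma)\geq\mathbf{d}(\gamma')>-\infty$ for the strictly feasible $\gamma'$ from \cref{as:definite}. Your version is slightly more self-contained --- it does not invoke the Saddle Point Theorem and does not need the primal optimum to be attained, only the dual attainment already assumed in the statement --- and it makes transparent that the hypothesis is exactly the standard boundedness criterion $b[\gamma]\in\range(A[\gamma])$ for convex quadratics. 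The paper's version, in exchange, reuses the saddle-point machinery already deployed in the proof of \cref{lem:suff_obj_dual}, keeping the two arguments stylistically parallel. Both are complete proofs.
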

\begin{proof}
Let $\gamma^*\in\argmax_{\gamma\in\Gamma_P}\mathbf{d}(\gamma)$. 
By \cref{lem:suff_obj_dual}, it suffices to show that
$A[\gamma^*]\succ 0$.
Suppose otherwise, so that $A[\gamma^*]\not\succ 0$.
Then, the assumptions of the corollary furnish a $v\in\ker(A[\gamma^*])$ such that $\ip{v, b[\gamma^*]} \neq 0$.
Let $(x^*,t^*)\in\argmin_{(x,t)\in\S_\SDP}2t$. By the Saddle Point Theorem applied to \eqref{eq:SDP_SP}, we deduce
\begin{align*}
0 = \ip{v,0_n} = \ip{v,\grad_x [\gamma^*, q(x^*)]} = 2\ip{v,A[\gamma^*]x^* + b[\gamma^*]} \neq 0,
\end{align*}
a contradiction.
\end{proof}
\begin{remark}
\citet{burer2019exact} study diagonal QCQPs and show~\cite[Theorem 1]{burer2019exact} that objective value exactness holds whenever certain systems of equations are infeasible. Specifically, their sufficient condition for diagonal QCQPs can be rewritten as the condition that for any $i\in[n]$, the system $\set{\gamma\in\Gamma_p,\,e_i\in\ker(A[\gamma]),\, b[\gamma]_i= 0}$ is infeasible. \cref{cor:suff_obj_dual}  generalizes \cite[Theorem 1]{burer2019exact} by considering general matrices $A_i$ as opposed to diagonal matrices considered in \cite{burer2019exact}.
\mathprog{\qed}
\end{remark}

Alternatively, one may impose the slightly weaker condition that $-b[\gamma]^\top A[\gamma]^{-1}b[\gamma]$ gets ``sufficiently steep near points at which $A[\gamma]\not\succ 0$'' compared to $\norm{(c_1,\dots,c_m)}_2$.
\begin{corollary}
\label{cor:sufficiently_steep}
Suppose \cref{as:definite} holds and that $\sup_{\gamma\in\Gamma_P}\mathbf{d}(\gamma)$ is achieved. Furthermore, suppose that for all $\gamma\in\Gamma_P$ such that $A[\gamma]\not\succ 0$, there exists $\delta\in\R^m$ such that $\gamma_\delta\coloneqq \gamma + \delta\in\inter(\Gamma_P)$ and $\gamma_{2\delta}\coloneqq \gamma+2\delta\in\inter(\Gamma_P)$ and 
\begin{align*}
\left(-b[\gamma_\delta]^\top A[\gamma_\delta]^{-1} b[\gamma_\delta]\right) - \left(-b[\gamma_{2\delta}]^\top A[\gamma_{2\delta}]^{-1} b[\gamma_{2\delta}]\right) \leq -\norm{\delta}_2 \sqrt{\sum_{i=1}^m c_i^2}.
\end{align*}
Then, $\argmin_{(x,t)\in\cS}2t = \argmin_{(x,t)\in\cS_\SDP}2t$. Furthermore, the minimizers of these two optimization problems are unique.
\end{corollary}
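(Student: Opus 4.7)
The plan is to reduce to \cref{lem:suff_obj_dual} by exhibiting a maximizer $\gamma^{**}\in\argmax_{\gamma\in\Gamma_P}\mathbf{d}(\gamma)$ satisfying $A[\gamma^{**}]\succ 0$. Let $\gamma^*$ be a maximizer (which exists by assumption). If $A[\gamma^*]\succ 0$, there is nothing to do; otherwise the hypothesis furnishes some $\delta\in\R^m$ with $\gamma_\delta,\gamma_{2\delta}\in\inter(\Gamma_P)$ (so in particular $A[\gamma_\delta],A[\gamma_{2\delta}]\succ 0$) satisfying the stated inequality. I will show that $\mathbf{d}(\gamma_{2\delta})\geq \mathbf{d}(\gamma^*)$, which forces $\gamma_{2\delta}$ to also be a maximizer, and since $A[\gamma_{2\delta}]\succ 0$, \cref{lem:suff_obj_dual} then completes the argument.

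The key calculation uses the closed form recorded in \cref{rem:inter-d_gamma}: for any $\gamma$ with $A[\gamma]\succ 0$, one has $\mathbf{d}(\gamma) = -b[\gamma]^\top A[\gamma]^{-1}b[\gamma] + c[\gamma]$. Splitting $\mathbf{d}(\gamma_{2\delta}) - \mathbf{d}(\gamma_\delta)$ into the quadratic and linear pieces, the hypothesis of the corollary gives
\[
\left(-b[\gamma_{2\delta}]^\top A[\gamma_{2\delta}]^{-1}b[\gamma_{2\delta}]\right) - \left(-b[\gamma_\delta]^\top A[\gamma_\delta]^{-1}b[\gamma_\delta]\right) \;\geq\; \norm{\delta}_2\sqrt{\sum_{i=1}^m c_i^2},
\]
while a direct computation together with Cauchy--Schwarz yields
\[
c[\gamma_{2\delta}] - c[\gamma_\delta] = \sum_{i=1}^m \delta_i c_i \;\geq\; -\norm{\delta}_2\sqrt{\sum_{i=1}^m c_i^2}.
\]
Summing the two inequalities gives $\mathbf{d}(\gamma_{2\delta}) \geq \mathbf{d}(\gamma_\delta)$.

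To upgrade this to $\mathbf{d}(\gamma_{2\delta}) \geq \mathbf{d}(\gamma^*)$, I invoke concavity of $\mathbf{d}$ (which follows from its definition as an infimum of affine functions in $\gamma$) applied to the three collinear points $\gamma^*$, $\gamma_\delta$, $\gamma_{2\delta}$: the midpoint inequality reads $\mathbf{d}(\gamma_\delta) \geq \tfrac{1}{2}\mathbf{d}(\gamma^*) + \tfrac{1}{2}\mathbf{d}(\gamma_{2\delta})$, and combining this with $\mathbf{d}(\gamma_\delta) \leq \mathbf{d}(\gamma_{2\delta})$ gives $\mathbf{d}(\gamma_{2\delta}) \geq \mathbf{d}(\gamma^*)$, as desired. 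Since $\gamma^*$ is a maximizer, $\gamma_{2\delta}$ is too, and applying \cref{lem:suff_obj_dual} at $\gamma_{2\delta}$ produces both optimizer exactness and uniqueness.

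The main ``obstacle'' is really just recognizing the design of the hypothesis: the quantity $\norm{\delta}_2\sqrt{\sum_i c_i^2}$ is precisely calibrated so that the hypothesized decrease in the quadratic part along $\gamma_\delta\to\gamma_{2\delta}$ dominates the worst-case Cauchy--Schwarz decrease of the linear part $c[\gamma]$ in the same direction. Once this is observed, everything else is a two-line concavity argument plus invocation of \cref{lem:suff_obj_dual}; no delicate handling of the boundary of $\Gamma_P$ is needed because $\gamma_\delta,\gamma_{2\delta}$ lie in $\inter(\Gamma_P)$ by construction and the formula from \cref{rem:inter-d_gamma} applies there directly.
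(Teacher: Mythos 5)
Your proposal is correct and follows essentially the same route as the paper's proof: both derive $\mathbf{d}(\gamma_\delta)\leq\mathbf{d}(\gamma_{2\delta})$ from the hypothesized inequality plus Cauchy--Schwarz on $c[\gamma_{2\delta}]-c[\gamma_\delta]=\sum_i\delta_i c_i$, then use concavity of $\mathbf{d}$ along the collinear points $\gamma^*,\gamma_\delta,\gamma_{2\delta}$ to produce a maximizer in $\inter(\Gamma_P)$ and invoke \cref{lem:suff_obj_dual}. The only cosmetic difference is that you promote $\gamma_{2\delta}$ to a maximizer while the paper promotes $\gamma_\delta$; both are valid.
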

\begin{proof}
Let $\gamma^*\in\argmax_{\gamma\in\Gamma_P}\mathbf{d}(\gamma)$. We will construct an optimizer $\tilde \gamma\in \argmax_{\gamma\in\Gamma_P}\mathbf{d}(\gamma)$ for which $A[\tilde\gamma]\succ 0$.
The result will then follow from \cref{lem:suff_obj_dual}.

If $A[\gamma^*]\succ 0$ then we may take $\tilde \gamma = \gamma^*$.
Else, let $\delta$ be furnished by the assumption of the corollary and note that $\delta\neq 0$.
We will set $\tilde \gamma = \gamma^*_\delta$. 
Then, $\tilde \gamma \in\inter(\Gamma)$ and thus $A[\tilde \gamma]\succ0$. 
By optimality of $\gamma^*$, it suffices to show that $\mathbf{d}(\gamma^*)\leq \mathbf{d}(\gamma^*_\delta)$. As $\mathbf{d}(\gamma)$ is concave and $\gamma^*, \gamma^*_\delta,\gamma^*_{2\delta}$ lie on a line, it suffices in turn to show that $\mathbf{d}(\gamma^*_\delta) \leq \mathbf{d}(\gamma^*_{2\delta})$. Finally, as $\gamma^*_\delta$ and $\gamma^*_{2\delta}$ both lie in $\inter(\Gamma_P)$, we may expand
\begin{align*}
\mathbf{d}(\gamma^*_\delta) - \mathbf{d}(\gamma^*_{2\delta}) &=
\left(-b[\gamma^*_\delta]A[\gamma^*_\delta]^{-1}b[\gamma^*_\delta] + c[\gamma^*_\delta]\right) -
\left(-b[\gamma^*_{2\delta}]A[\gamma^*_{2\delta}]^{-1}b[\gamma^*_{2\delta}] + c[\gamma^*_{2\delta}]\right)\\
&\leq -\norm{\delta}_2 \sqrt{\sum_{i=1}^m c_i^2}  - \sum_{i=1}^m \delta_i c_i\leq 0.
\end{align*}
Applying \cref{lem:suff_obj_dual} concludes the proof.
\end{proof}

\section{Applications: Objective value exactness}
\label{sec:obj_appl}
In this section, we apply the results of \cref{sec:obj_val_exactness} to random and semi-random QCQPs. Again, these examples offer further evidence that \emph{questions of exactness can be treated systematically whenever $\Gamma$, $\Gamma_P$, or $\Gamma^\circ$ is well-understood}.
In fact, the results in this section show that the ideas of \cref{sec:obj_val_exactness} can be applied (at times with additive errors) even when the dual set $\Gamma$, $\Gamma_P$, or $\Gamma^\circ$ is not known exactly, but only approximately.
The random and semi-random QCQPs considered in this section are motivated by recent work~\cite{burer2019exact,locatelli2020kkt}, which has treated random QCQPs as a testing ground for understanding the strength or explanatory power of various sufficient conditions for objective value exactness.

We will fix $m$, the number of quadratic constraints, and take $n$, the number of variables, to $+\infty$ independently. We will abbreviate ``with probability $1 - o(1)$ as $n\to+\infty$'' as ``asymptotically almost surely'' (\aas).

The random and semi-random QCQPs we will consider in this section will involve data generated according to the normalized Gaussian Orthogonal Ensemble (NGOE). We collect some basic facts on the NGOE in the following section.

\subsection{Preliminaries on the (normalized) Gaussian Orthogonal Ensemble}
Here, we recall the normalized GOE and a few of its basic properties.

\begin{definition}
Let $A\in\S^n$ be a random matrix where: each diagonal entry $A_{i,i}$ is i.i.d.\ $N(0,1/2n)$; each superdiagonal entry $A_{i,j}$ is i.i.d.\ $N(0,1/4n)$; and each subdiagonal entry $A_{i,j}$ is defined by symmetry. We will refer to this distribution as the \emph{normalized Gaussian Orthogonal Ensemble} (NGOE). We will write
\begin{align*}
A\sim \ngoe(n)
\end{align*}
to denote the fact that $A$ is drawn according to this distribution.\mathprog{\qed}
\end{definition}

\begin{remark}
A different procedure for generating the same distribution is: sample $M\in\R^{n\by n}$ with every entry i.i.d.\ $N(0,1/2n)$ and return $A = (M + M^\top)/2$.\mathprog{\qed}
\end{remark}

The NGOE is a very well-understood distribution~\cite{tao2012topics}.
We will only need a few basic facts.
The first two facts state that the NGOE is invariant under various notions of rotation.
\begin{fact}
\label{fact:goe_rotation_invariant}
Fix $U\in\R^{n\by n}$ orthogonal and let $A\sim \ngoe(n)$. Then, $U^\top A U\sim \ngoe(n)$.
\end{fact}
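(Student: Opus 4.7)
The plan is to reduce to the alternative construction mentioned in the preceding remark: write $A = (M + M^\top)/2$ where $M \in \R^{n\by n}$ has i.i.d.\ entries $N(0, 1/2n)$. Under this construction,
\begin{align*}
U^\top A U = \frac{(U^\top M U) + (U^\top M U)^\top}{2},
\end{align*}
so it suffices to prove the stronger statement that $U^\top M U$ has the same law as $M$; the fact for $A$ then follows by applying the symmetrization $X \mapsto (X+X^\top)/2$ to both sides.

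To prove this stronger claim, I would vectorize: $\operatorname{vec}(M) \in \R^{n^2}$ has distribution $N(0, (1/2n) I_{n^2})$ by the independence of the entries of $M$, and the Kronecker identity $\operatorname{vec}(U^\top M U) = (U^\top \otimes U^\top)\operatorname{vec}(M)$ realizes conjugation as a linear map on $\R^{n^2}$. Because $U \otimes U$ is orthogonal whenever $U$ is (its columns are tensor products of orthonormal vectors, hence orthonormal), this linear map preserves the isotropic Gaussian $N(0, (1/2n) I_{n^2})$, which gives $\operatorname{vec}(U^\top M U) \sim \operatorname{vec}(M)$ in distribution and hence $U^\top M U \sim M$.

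There is no substantive obstacle; the only bookkeeping is verifying that the $(M + M^\top)/2$ construction actually reproduces the NGOE, i.e.\ that the resulting diagonal entries have variance $1/2n$ and the off-diagonal entries have variance $1/4n$, which is a short Gaussian variance calculation. An essentially equivalent alternative route works directly with the joint density of the upper-triangular entries of $A$, which combines to a constant times $\exp(-n \tr(A^2))$ after absorbing the factors of $2$ from the off-diagonal entries; this density is manifestly invariant under $A \mapsto U^\top A U$ because both $\tr(A^2)$ and the Lebesgue reference measure on $\S^n$ (with respect to the Frobenius inner product) are preserved by the orthogonal isometry $A \mapsto U^\top A U$.
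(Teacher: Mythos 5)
Your proof is correct. The paper does not actually prove this statement---it is listed as a standard \emph{Fact} about the GOE with a pointer to \cite{tao2012topics}---so there is no in-paper argument to compare against; both of your routes (the $(M+M^\top)/2$ construction combined with orthogonality of $U\otimes U$ acting on the isotropic Gaussian $\operatorname{vec}(M)$, and the invariance of the density proportional to $e^{-n\tr(A^2)}$ together with the fact that $A\mapsto U^\top AU$ is a Frobenius isometry of $\S^n$) are complete and standard, and the variance bookkeeping for the symmetrization ($1/2n$ on the diagonal, $1/4n$ off it) checks out.
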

\begin{fact}
\label{fact:goe_reweighting_invariant}
Fix $U\in\R^{k\by k}$ orthogonal and let $A_1,\dots, A_k\stackrel{\text{i.i.d.}}{\sim} \ngoe(n)$. Define $\tilde A_i \coloneqq \sum_{j=1}^k U_{i,j} A_j$.
	Then, $\tilde A_1,\dots,\tilde A_k\stackrel{\text{i.i.d.}}{\sim}\ngoe(n)$.
\end{fact}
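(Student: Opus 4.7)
The plan is to reduce the claim to the standard fact that the isotropic Gaussian distribution is invariant under orthogonal transformations, applied entry by entry. First I would note that symmetry of each $\tilde A_i$ is immediate since $\tilde A_i$ is a linear combination of the symmetric matrices $A_1,\dots,A_k$, so it remains only to check the joint distribution of the entries.

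Next, I would fix an upper-triangular position $(j,\ell)$ with $j\le \ell$ and consider the random vector
\[
v_{j,\ell} \coloneqq \bigl((A_1)_{j,\ell},\dots,(A_k)_{j,\ell}\bigr)^\top \in \R^k.
\]
By definition of the NGOE and independence of the samples $A_1,\dots,A_k$, the coordinates of $v_{j,\ell}$ are i.i.d.\ $N(0,\sigma_{j,\ell}^2)$ with $\sigma_{j,\ell}^2 = 1/(2n)$ if $j=\ell$ and $\sigma_{j,\ell}^2 = 1/(4n)$ if $j<\ell$. In particular, $v_{j,\ell}\sim N(0,\sigma_{j,\ell}^2 I_k)$ is a spherical Gaussian, so the orthogonal invariance of such Gaussians yields $U v_{j,\ell} \sim N(0,\sigma_{j,\ell}^2 I_k)$ as well. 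But by construction $U v_{j,\ell}$ is precisely the vector of entries $\bigl((\tilde A_1)_{j,\ell},\dots,(\tilde A_k)_{j,\ell}\bigr)^\top$, so each $(\tilde A_i)_{j,\ell}$ is $N(0,\sigma_{j,\ell}^2)$ and these $k$ coordinates are mutually independent.

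Finally I would establish independence across distinct entry positions. For two distinct upper-triangular positions $(j,\ell)\neq (j',\ell')$, the vectors $v_{j,\ell}$ and $v_{j',\ell'}$ are independent (since every entry of the NGOE is independent of every other on and above the diagonal, and the samples $A_1,\dots,A_k$ are independent). Therefore $U v_{j,\ell}$ and $U v_{j',\ell'}$, being measurable functions of disjoint independent collections, are also independent. Combining these observations, the joint law of the upper-triangular and diagonal entries of $(\tilde A_1,\dots,\tilde A_k)$ matches that of $k$ independent $\ngoe(n)$ matrices, which (together with symmetry) is exactly the desired conclusion. I do not anticipate a genuine obstacle here; the only point requiring any care is bookkeeping to ensure the variances at diagonal and off-diagonal positions come out correctly after applying $U$, which is automatic because $U$ acts on the sample index and not on the entry index.
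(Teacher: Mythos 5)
Your argument is correct. The paper states this as a \cref{fact:goe_reweighting_invariant} without proof (citing \cite{tao2012topics} for background on the GOE), so there is no in-paper proof to compare against; your entrywise reduction to the rotational invariance of the spherical Gaussian $N(0,\sigma_{j,\ell}^2 I_k)$ at each fixed position $(j,\ell)$, combined with preservation of joint independence across disjoint positions, is the standard verification and handles the differing diagonal/off-diagonal variances correctly, since $U$ acts only on the sample index. An equally quick alternative is to note that the joint density of $(A_1,\dots,A_k)$ is proportional to $\exp\left(-n\sum_{i}\tr(A_i^2)\right)$, which is invariant under the orthogonal change of variables $A\mapsto \tilde A$ because $\sum_i \tr(\tilde A_i^2)=\sum_i\tr(A_i^2)$; both routes are essentially the same observation.
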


Define also the \emph{normalized semicircular measure}
	\begin{align*}
	\mu_\nsc \coloneqq \frac{2}{\pi}\sqrt{(1 - x^2)_+}.
	\end{align*}

The next fact states that the NGOE obeys the semicircle law.
\begin{fact}
\label{fact:goe_semicircle}
	For any $\psi\in C_c^\infty(\R)$ and $\epsilon>0$,
	\begin{align*}
	\lim_{n\to\infty} \Pr\left[\abs{\int \psi d\mu_n - \int \psi d\mu_\nsc} >\epsilon\right] = 0.
	\end{align*}
	Here, $\mu_n$ is the \emph{random} measure constructed by sampling $A\sim\ngoe(n)$ and setting
$\mu_n \coloneqq \frac{1}{n}\sum_{j=1}^n \delta_{\lambda_j(A)}$,
where $\delta_{\lambda_j(A)}$ is the Dirac measure at $\lambda_j(A)$.
\end{fact}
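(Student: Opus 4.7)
The plan is to establish this classical semicircle law via the method of moments. Since $\mu_{\nsc}$ is compactly supported on $[-1,1]$, it is uniquely determined by its moments, so it suffices to prove that $\int x^k \, d\mu_n \to \int x^k \, d\mu_{\nsc}$ in probability for every fixed $k \in \mathbb{N}$, together with an a priori tail bound that rules out mass escaping to infinity. Weak convergence in probability of $\mu_n$ to $\mu_{\nsc}$ will then yield the stated result for any $\psi \in C_c^\infty(\R)$ by Weierstrass approximation of $\psi$ on a compact interval containing both $\supp(\psi)$ and the support of $\mu_n$ (the latter holding asymptotically almost surely).

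The first step will be to compute the expected $k$-th moment
$$\E\left[\int x^k \, d\mu_n\right] \;=\; \frac{1}{n}\,\E[\tr(A^k)] \;=\; \frac{1}{n}\sum_{i_1,\ldots,i_k \in [n]} \E[A_{i_1 i_2} A_{i_2 i_3} \cdots A_{i_k i_1}].$$
Because the entries of $A$ are jointly Gaussian, the Isserlis--Wick formula expresses each term as a sum over pair-partitions of the $k$ factors. I would organize the sum by the multigraph traced by the closed walk $i_1 \to i_2 \to \cdots \to i_k \to i_1$ on $[n]$ and, using the variance normalization of the NGOE, show that the dominant contribution per surviving pair-partition is $n^{k/2+1} \cdot (4n)^{-k/2}$ and comes only from walks on trees with $k/2+1$ vertices (forcing $k$ to be even) paired along non-crossing pair-partitions of the $k$ edges; all other configurations are lower order in $n$. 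Counting the non-crossing pair partitions of $\set{1,\ldots,2j}$ gives the Catalan number $C_j$, so $\E[\int x^{2j} d\mu_n] \to C_j / 4^j$, which is precisely $\int x^{2j} d\mu_{\nsc}$ (via the change of variable $y = 2x$ reducing to the standard semicircle on $[-2,2]$). Odd moments vanish term-by-term.

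The second step will be concentration. The cleanest route is the Gaussian Poincar\'e inequality: $A \mapsto \frac{1}{n}\tr(A^k)$ is a smooth function of the independent Gaussian entries of $A$, and a direct calculation of its gradient (whose entries are proportional to $(A^{k-1})_{ij}$) together with a moment bound on $\norm{A}_{\mathrm{op}}$ yields $\Var(\frac{1}{n}\tr(A^k)) = O(1/n^2)$. Chebyshev's inequality then upgrades the mean convergence to convergence in probability of each moment; combining with the tail bound $\Pr[\norm{A}_{\mathrm{op}} > 2] \to 0$ (obtainable from high-order moment estimates with $k = 2\lfloor \log n \rfloor$, or from standard Gaussian concentration for Lipschitz functions) gives the required tightness to pass from moments to general $\psi \in C_c^\infty(\R)$.

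The main obstacle is the combinatorial bookkeeping in step one: identifying precisely which walk-and-pair-partition configurations contribute at the leading order $n^{-1} \cdot n^{k/2+1} \cdot (4n)^{-k/2}$, and verifying via the variance normalization that surviving configurations correspond bijectively to pairs (non-crossing pair partition of the edges, labeled rooted plane tree) with the crossing partitions or cycles in the walk-graph each costing a factor of $n^{-1}$. This is essentially the content of Wigner's original argument. Everything else --- Wick's formula, the Catalan count, the Gaussian Poincar\'e inequality, and the polynomial approximation of $\psi$ on a compact set --- is standard and off-the-shelf.
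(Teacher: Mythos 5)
Your outline is correct, but there is nothing in the paper to compare it against: the paper states this as a \emph{Fact}, without proof, citing Tao's textbook \cite{tao2012topics} --- it is Wigner's semicircle law for the (normalized) GOE, and the normalization checks out (off-diagonal variance $1/4n$ gives $\sigma=1/2$, hence support $[-1,1]$ and density $\tfrac{2}{\pi}\sqrt{1-x^2}$, with even moments $C_j/4^j$ exactly as you claim). What you propose is the classical moment-method proof, which is essentially the argument in the cited reference, so in that sense you are reconstructing the standard route rather than diverging from the paper. The skeleton is sound: Wick/Isserlis expansion of $\tfrac1n\E[\tr(A^k)]$, identification of the leading-order closed walks as double traversals of trees on $k/2+1$ vertices counted by Catalan numbers (with the diagonal-entry terms and the same-orientation GOE pairings $\E[A_{ij}A_{ij}]$ on non-tree configurations correctly relegated to lower order), Gaussian Poincar\'e for the $O(1/n^2)$ variance bound (note the required input $\E[\tr(A^{2k-2})]=O(n)$ comes for free from the mean computation itself, so this step is self-contained), an operator-norm bound for tightness, and Weierstrass approximation to pass from polynomial test functions to $\psi\in C_c^\infty(\R)$. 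The one place you are candid about deferring work --- the combinatorial bookkeeping showing that every non-tree or crossing configuration loses at least a factor of $n$ --- is indeed the only nontrivial content, and it is standard; I see no gap in the plan, only the acknowledged labor of carrying it out.
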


Finally, we recall that the operator norm of $A\sim \ngoe(n)$ is $\approx 1$ asymptotically almost surely.
\begin{fact}
\label{fact:goe_bounded_norm}
Fix $\epsilon>0$ and let $A\sim\ngoe(n)$. Then, $-\lambda_{\min}(A), \lambda_{\max}(A)\in [1\pm \epsilon]$ \aas.
\end{fact}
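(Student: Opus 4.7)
The plan is to prove the bounds on $\lambda_{\max}(A)$ and $-\lambda_{\min}(A)$ separately. Both follow the same argument because $A$ and $-A$ are identically distributed: indeed $\lambda_{\min}(A) = -\lambda_{\max}(-A)$, and $-A \sim \ngoe(n)$ by \cref{fact:goe_rotation_invariant} applied with $U = -I_n$. So it suffices to show $\lambda_{\max}(A) \in [1 \pm \epsilon]$ \aas.

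For the lower bound $\lambda_{\max}(A) \geq 1 - \epsilon$, I would leverage \cref{fact:goe_semicircle} directly. Since $\mu_\nsc$ has a density that is strictly positive on $(-1,1)$, I can pick a nonnegative bump function $\psi \in C_c^\infty(\R)$ supported in $(1-\epsilon,1)$ with $\int \psi \, d\mu_\nsc > 0$. Taking $\epsilon' = \tfrac{1}{2}\int \psi \, d\mu_\nsc$ in \cref{fact:goe_semicircle} gives $\int \psi \, d\mu_n > \epsilon'$ \aas. But $\int \psi \, d\mu_n = \tfrac{1}{n}\sum_j \psi(\lambda_j(A))$ is positive only if some eigenvalue lies in $\supp(\psi) \subseteq (1-\epsilon,1)$, which forces $\lambda_{\max}(A) \geq 1 - \epsilon$ \aas.

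For the upper bound $\lambda_{\max}(A) \leq 1 + \epsilon$, I would use the trace moment method. Since $\lambda_{\max}(A)^{2k} \leq \tr(A^{2k})$ for every positive integer $k$, Markov's inequality gives
\begin{align*}
\Pr\left[\lambda_{\max}(A) > 1 + \epsilon\right] \leq \frac{\E[\tr(A^{2k})]}{(1+\epsilon)^{2k}}.
\end{align*}
Expanding the trace as $\E[\tr(A^{2k})] = \sum_{i_1,\dots,i_{2k}} \E[A_{i_1 i_2} A_{i_2 i_3} \cdots A_{i_{2k} i_1}]$ and applying Wick's formula, each expectation breaks into a sum over pair partitions of $\{1,\dots,2k\}$ weighted by products of variances $\tfrac{1}{2n}$ (diagonal) or $\tfrac{1}{4n}$ (off-diagonal). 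The leading-order contribution comes from non-crossing pair partitions realized by planar trees, and the normalization has been chosen precisely so that this limit is the semicircle $2k$-th moment, yielding $\E[\tr(A^{2k})] \leq n\,\tfrac{C_k}{4^k}(1+o(1)) \leq n$ as $n\to\infty$ with $k$ fixed, where $C_k$ is the $k$th Catalan number. Choosing $k = \lceil \log n / \log(1+\epsilon)\rceil$ then drives the right-hand side to $0$.

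The main obstacle is the upper bound, specifically a careful accounting in the Wick expansion to ensure that the error terms remain $o(n)$ uniformly as $k$ grows modestly with $n$. An alternative that avoids refined combinatorics is to combine a crude moment bound for $\E[\lambda_{\max}(A)]$ with Gaussian concentration: viewing $A \mapsto \lambda_{\max}(A)$ as a $1$-Lipschitz function of the (scaled) Gaussian entries (Frobenius norm), the Borell--TIS inequality gives sub-Gaussian concentration around the mean with variance proxy $O(1/n)$, so it suffices to prove $\E[\lambda_{\max}(A)] \leq 1 + \epsilon/2$, which follows from the same trace estimate with much looser constants.
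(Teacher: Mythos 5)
The paper does not actually prove this statement: it is quoted as one of a ``few basic facts'' about the normalized GOE with a pointer to \cite{tao2012topics}, so there is no in-paper argument to compare against. Your sketch is the classical edge-of-the-spectrum proof and is essentially sound, so I will only flag two points. First, the reduction to $\lambda_{\max}$ is correct but mis-justified: \cref{fact:goe_rotation_invariant} with $U=-I_n$ gives $(-I_n)^\top A(-I_n)=A$, not $-A$; the claim $-A\sim\ngoe(n)$ instead follows from the symmetry of the centered Gaussian entries (or from \cref{fact:goe_reweighting_invariant} with $k=1$ and $U=(-1)$). The lower bound via a bump function supported in $(1-\epsilon,1)$ together with \cref{fact:goe_semicircle} is clean and complete. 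Second, the upper bound is where all of the real work lives, and you correctly identify this: the estimate $\E[\tr(A^{2k})]\le n\,C_k 4^{-k}(1+o(1))$ must be shown to hold uniformly as $k$ grows like $\log n$, which requires a F\"uredi--Koml\'os-type accounting of the non-planar terms in the Wick expansion --- standard, but not free, and as written your argument is a plan rather than a proof at exactly this step. Your fallback of combining a fixed-$k$ moment bound on $\E[\lambda_{\max}(A)]$ with Gaussian concentration of the $O(1/\sqrt{n})$-Lipschitz function $\lambda_{\max}$ is the cleaner way to close that gap and is the route most modern treatments (including the cited reference) take.
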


\subsection{Exactness in the fully Gaussian setting}
\label{subsec:fully_gaussian}
This subsection considers random Euclidean distance minimization problems of the form
\begin{align}
\label{eq:euc_dist_qcqp}
\inf_{x\in\R^n}\set{\norm{x}_2^2:\, q_i(x) = 0,\,\forall i\in[m]}.
\end{align}
In words, we are looking for minimum norm solutions to random quadratic systems.

We will sample each quadratic constraint $q_i(x) = x^\top A_i x + 2b_i^\top x + c_i$ independently where $A_i\sim\ngoe(n)$, $b_i\sim N(0,I_n/n)$, and $c_i\sim N(0,1)$.
Here, the normalization on the $A_i$s and $b_i$s are chosen so that $\norm{A_i}_2 \approx 1$ and $\norm{b_i}_2\approx 1$.

Below, we will show that for any fixed $m$ and $n\to\infty$, \eqref{eq:euc_dist_qcqp} has an exact SDP relaxation \aas. Specifically, we will apply ideas from \cref{cor:suff_obj_dual} to prove:

\begin{proposition}
\label{thm:rank_one_edm}
Let $A_1\dots, A_m\stackrel{\text{i.i.d.}}{\sim}\ngoe(n)$, $b_1,\dots,b_m\stackrel{\text{i.i.d.}}{\sim} N(0, I_n/n)$ and $c_1,\dots,c_m\stackrel{\text{i.i.d.}}{\sim} N(0,1)$ be independent. Then, \aas,
optimizer exactness holds in \eqref{eq:euc_dist_qcqp}, i.e., $\argmin_{(x,t)\in\cS}2t = \argmin_{(x,t)\in\cS_\SDP}2t$.
\end{proposition}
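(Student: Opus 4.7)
The plan is to invoke \cref{lem:suff_obj_dual}: it suffices to show that $\sup_{\gamma \in \Gamma_P}\mathbf{d}(\gamma)$ is attained at some $\gamma^*$ with $A[\gamma^*] \succ 0$. First I would verify \cref{as:definite} (trivial at $\gamma = 0$ since $A[0] = I_n$) and establish compactness of $\Gamma_P$ \aas. For the latter, I would combine rotation invariance (\cref{fact:goe_reweighting_invariant} gives $M(u) \coloneqq \sum_i u_i A_i \sim \ngoe(n)$ for every fixed $u \in \bS^{m-1}$), edge concentration (\cref{fact:goe_bounded_norm}), and an $\epsilon$-net argument on the compact sphere $\bS^{m-1}$ coupled with the $O(1)$-Lipschitz continuity of $u \mapsto \lambda_{\min}(M(u))$, yielding $\Gamma_P \subseteq \overline{B(0, 1 + o(1))}$ \aas. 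Since $\mathbf{d}$ is concave and upper semicontinuous (as an infimum of affine functions), it attains its maximum on the compact set $\Gamma_P$.

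Next, I would exploit the specific structure $A_\obj = I_n$, $b_\obj = 0$, $c_\obj = 0$ to write, for $\gamma \in \inter(\Gamma_P)$,
\begin{align*}
\mathbf{d}(\gamma) = c^\top\gamma - \gamma^\top \bigl(B^\top A[\gamma]^{-1} B\bigr)\, \gamma,
\end{align*}
where $B \coloneqq [b_1\,|\,\cdots\,|\,b_m]$. The key analytical step is to combine the semicircle law (\cref{fact:goe_semicircle}) with Hanson--Wright-type concentration for Gaussian quadratic forms (using that $B$ is independent of $A$) to obtain $B^\top A[\gamma]^{-1} B \to g_\nsc(\|\gamma\|_2)\, I_m$ \aas, where $g_\nsc(s) \coloneqq \int (1+sx)^{-1}\mu_\nsc(dx) = \tfrac{2}{s^2}(1 - \sqrt{1-s^2})$ is the Stieltjes transform of the semicircle. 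This gives $\mathbf{d}(\gamma) \to \mathbf{d}_\infty(\gamma) \coloneqq c^\top\gamma - 2\bigl(1 - \sqrt{1 - \|\gamma\|_2^2}\bigr)$ uniformly on compact subsets of $B(0, 1)$ \aas.

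The limit $\mathbf{d}_\infty$ is strictly concave on $B(0, 1)$ with unique maximizer $\gamma^*_\infty = c/\sqrt{4 + \|c\|_2^2}$ and maximum value $\sqrt{4 + \|c\|_2^2} - 2$. Since $m$ is fixed, $\|c\|_2 \leq M$ on a high-probability event for some deterministic $M$, so there is a deterministic $\eta > 0$ with $\|\gamma^*_\infty\|_2 \leq 1 - 2\eta$ \aas. On the closed ball $\overline{B(0, 1 - \eta)}$, uniform convergence $\mathbf{d} \to \mathbf{d}_\infty$ together with strict concavity of $\mathbf{d}_\infty$ forces the restricted maximizer of $\mathbf{d}$ to cluster around $\gamma^*_\infty$, delivering value $\sqrt{4 + \|c\|_2^2} - 2 + o(1)$.

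The hard part will be excluding that the global maximizer on $\Gamma_P$ lies in the boundary layer $\{1 - \eta < \|\gamma\|_2 \leq 1 + o(1)\} \cap \Gamma_P$, where concentration estimates for $B^\top A[\gamma]^{-1} B$ degrade due to the divergence of $\|A[\gamma]^{-1}\|$. My plan is to extend the concentration of $F(\gamma) \coloneqq b[\gamma]^\top A[\gamma]^{-1} b[\gamma]$ into this layer by exploiting that $\tfrac{1}{n}\tr A[\gamma]^{-1}$ has the finite bulk limit $g_\nsc(\|\gamma\|_2) \to 2$ as $\|\gamma\|_2 \to 1$ (the Tracy--Widom edge eigenvalue contributes only $O(n^{-1/3})$), yielding a one-sided lower bound $F(\gamma) \geq 2(1 - \sqrt{1 - \|\gamma\|_2^2}) - o(1) \geq 2 - 2\sqrt{2\eta} - o(1)$ uniformly on the layer. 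Together with $c^\top\gamma \leq \|c\|_2(1 + o(1))$, this bounds $\mathbf{d}(\gamma)$ on the layer by $\|c\|_2 - 2 + 2\sqrt{2\eta} + o(1)$, which for $\eta < 1/(2(M+1)^2)$ is strictly less than $\sqrt{4 + \|c\|_2^2} - 2 = \mathbf{d}_\infty(\gamma^*_\infty)$ by the positive margin $\sqrt{4 + \|c\|_2^2} - \|c\|_2 = 4/(\sqrt{4+\|c\|_2^2}+\|c\|_2)$. This forces $\gamma^* \in \inter(\Gamma_P)$ \aas, and \cref{lem:suff_obj_dual} concludes the proof.
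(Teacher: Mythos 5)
Your overall architecture matches the paper's: invoke \cref{lem:suff_obj_dual}, and show via the semicircle law that $-b[\gamma]^\top A[\gamma]^{-1}b[\gamma]$ concentrates to the sphere-cap function $\phi(\norm{\gamma}_2)=2(\sqrt{1-\norm{\gamma}_2^2}-1)$ so that the dual maximizer is forced into $\inter(\Gamma_P)$. The preliminary steps (compactness of $\Gamma_P$ \aas, the identity $\mathbf{d}(\gamma)=c^\top\gamma-\gamma^\top B^\top A[\gamma]^{-1}B\gamma$, convergence to $\mathbf{d}_\infty$, and the explicit maximizer $\gamma_\infty^*=c/\sqrt{4+\norm{c}_2^2}$) are all correct. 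The gap is exactly where you say the hard part is: the boundary layer $\set{1-\eta<\norm{\gamma}_2\leq 1+o(1)}\cap\Gamma_P$. Your proposed one-sided lower bound $F(\gamma)\geq 2(1-\sqrt{1-\norm{\gamma}_2^2})-o(1)$ \emph{uniformly} on the layer is not justified by the argument sketched. Pointwise, dropping the near-zero eigenvalues is fine for a lower bound, but the uniformization device available in this paper (\cref{lem:convex_pw_implies_uniform}) requires controlling the concave function on a neighborhood of the compact set, and any neighborhood of the layer exits $\Gamma_P$, where $-F$ (equivalently $\mathbf{d}$) drops to $-\infty$; an $\epsilon$-net plus Lipschitz continuity also fails since $\norm{A[\gamma]^{-1}}$ is unbounded there. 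The appeal to Tracy--Widom scaling of the edge eigenvalue is heuristic and, as stated, does not deliver a uniform-over-$\gamma$ statement. (One can salvage your bound by regularizing, $F(\gamma)\geq b[\gamma]^\top(A[\gamma]+\delta I)^{-1}b[\gamma]=(1+\delta)\,b[\gamma']^\top A[\gamma']^{-1}b[\gamma']$ with $\gamma'=\gamma/(1+\delta)$, which pushes the estimate back into a strictly interior sphere --- but that step is missing from your proposal.)

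The paper avoids the boundary layer entirely. It fixes two radii $r_1<r_2<1$ chosen so that $\bigl(\phi(r_1)-\phi(r_2)-2\epsilon\bigr)/(r_2-r_1)\geq M\geq\norm{c}_2$ (possible because $\phi'(r)\to-\infty$ as $r\to1$), uses the two-sided concentration of $-b[\gamma]^\top A[\gamma]^{-1}b[\gamma]$ only on the two interior spheres $r_1\bS^{m-1}$ and $r_2\bS^{m-1}$, and then concludes: for any $\gamma\in\Gamma_P\setminus B(0,r_2)$ with radial projections $\gamma^{(1)},\gamma^{(2)}$ onto $B(0,r_1),B(0,r_2)$, one has $\mathbf{d}(\gamma^{(1)})\geq\mathbf{d}(\gamma^{(2)})$, whence concavity of $\mathbf{d}$ along the ray forces $\mathbf{d}(\gamma)\leq\mathbf{d}(\gamma^{(2)})$. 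No estimate is ever needed where $A[\gamma]$ degenerates, and no explicit limit maximizer is required. I recommend replacing your boundary-layer analysis with this concavity-plus-steepness comparison; the rest of your write-up then goes through.
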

We will highlight the very simple geometric ideas underlying the proof of this result and defer proofs of the more technical lemmas to \cref{sec:deferred_proofs_obj_appl}.

We will \cref{thm:rank_one_edm} using \cref{lem:suff_obj_dual}; specifically, we will show that $\mathbf{d}(\gamma)$ is maximized on the interior of $\Gamma_P$. As a first step, we observe that $\Gamma_P$ contains the unit ball (shrunk by $\epsilon$) \aas. The following lemma follows from an $\epsilon$-net argument, concavity of $\lambda_{\min}(A[\gamma])$ as a function of $\gamma$, and \cref{fact:goe_bounded_norm,fact:goe_semicircle}.

\begin{lemma}
	\label{lem:fg_lambda_min_concentration}
	Fix $r\geq0$ and $\epsilon>0$. Let $A_1\dots, A_m\stackrel{\text{i.i.d.}}{\sim}\ngoe(n)$. Then, \aas,
	\begin{align*}
	\lambda_{\min}(A[\gamma]) \in [1 - r \pm \epsilon],~ \forall \gamma\in r\bS^{m-1}.
	\end{align*}
	In particular, $\inter(\Gamma_P)\supseteq B(0,1-\epsilon)$ \aas.
\end{lemma}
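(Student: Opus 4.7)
The plan is to reduce the uniform statement over $r\bS^{m-1}$ to a pointwise statement where \cref{fact:goe_bounded_norm} applies directly, via an $\epsilon$-net together with a Lipschitz argument, and then to bootstrap the ball inclusion using concavity. As a first step, I would parametrize $\gamma = r u$ with $u \in \bS^{m-1}$, and observe that in the Euclidean-distance QCQP \eqref{eq:euc_dist_qcqp} we have $A_\obj = I_n$, so $A[\gamma] = I_n + r M(u)$, where $M(u) \coloneqq \sum_{i=1}^m u_i A_i$. By \cref{fact:goe_reweighting_invariant} (extending $u$ to an orthogonal $m \times m$ matrix with $u$ as its first column), $M(u) \sim \ngoe(n)$ for each fixed $u \in \bS^{m-1}$. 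Consequently, \cref{fact:goe_bounded_norm} gives, for each fixed $u$, both $-\lambda_{\min}(M(u))$ and $\lambda_{\max}(M(u))$ lie in $[1 \pm \delta]$ \aas{}, for any chosen $\delta > 0$.

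Next, I would fix a small $\delta > 0$ (to be chosen in terms of $\epsilon, r, m$) and build a $\delta$-net $N \subseteq \bS^{m-1}$ of size at most $(3/\delta)^m$. Since $m$ is fixed, $|N|$ is a constant independent of $n$, so a union bound of the pointwise statements yields, \aas{}, $\lambda_{\min}(M(u)) \in [-1 \pm \delta]$ for every $u \in N$ and $\|A_i\|_{\mathrm{op}} \leq 1 + \delta$ for every $i \in [m]$. To extend from the net to all of $\bS^{m-1}$, I would use that $\lambda_{\min}$ is $1$-Lipschitz in the operator norm: for any $u \in \bS^{m-1}$ with a nearest $u' \in N$ satisfying $\|u - u'\|_2 \leq \delta$,
\begin{align*}
|\lambda_{\min}(M(u)) - \lambda_{\min}(M(u'))| \leq \|M(u) - M(u')\|_{\mathrm{op}} \leq \sqrt{m}\,\delta\,\max_{i \in [m]} \|A_i\|_{\mathrm{op}} \leq \sqrt{m}(1 + \delta)\delta.
\end{align*}
Choosing $\delta$ small enough that $r[\delta + \sqrt{m}(1+\delta)\delta] \leq \epsilon$, we conclude $\lambda_{\min}(A[\gamma]) = 1 + r\lambda_{\min}(M(u)) \in [1 - r \pm \epsilon]$ for every $\gamma \in r\bS^{m-1}$ \aas{}, proving the first assertion.

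For the ``in particular'' conclusion, I would apply the uniform bound to the sphere of radius $r = 1 - \epsilon/2$ with precision $\epsilon/4$, obtaining $\lambda_{\min}(A[\gamma]) \geq \epsilon/4 > 0$ on $(1-\epsilon/2)\bS^{m-1}$ \aas{}. Since $\gamma \mapsto \lambda_{\min}(A[\gamma])$ is concave ($\lambda_{\min}$ is concave on $\S^n$ and $A[\gamma]$ is affine in $\gamma$) and $\lambda_{\min}(A[0]) = 1$, the minimum over the closed ball $\bar B(0, 1-\epsilon/2)$ is attained on the boundary sphere, giving $A[\gamma] \succ 0$ for every $\gamma$ in the open ball $B(0, 1-\epsilon/2) \supseteq B(0, 1-\epsilon)$, and hence the claimed containment in $\inter(\Gamma_P)$. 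The main delicacy is the interplay between the net size and the pointwise concentration probability: because $m$ is fixed (not growing with $n$), the net has constant cardinality and finitely many $o(1)$-probability failure events combine to a single $o(1)$-probability event, so the argument goes through using only \cref{fact:goe_bounded_norm} without invoking finer large-deviation estimates (\cref{fact:goe_semicircle} is not strictly needed here, though it would be the right tool were $m$ permitted to scale with $n$).
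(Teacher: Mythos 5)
Your proof is correct. It shares the overall skeleton of the paper's argument---reduce to a pointwise statement on the sphere via \cref{fact:goe_reweighting_invariant} and \cref{fact:goe_bounded_norm}, upgrade to a uniform statement, then use concavity of $\gamma\mapsto\lambda_{\min}(A[\gamma])$ for the ball inclusion---but the uniformization step is carried out by a genuinely different mechanism. The paper appeals to its general-purpose \cref{lem:convex_pw_implies_uniform}, which converts pointwise \aas{} convergence of convex (here, negated concave) functions into uniform \aas{} convergence on compact sets; that lemma is itself a net argument, but one that exploits convexity rather than any quantitative modulus of continuity, and it is reused elsewhere (e.g., in the proof of \cref{lem:fg_sphere_cap_concentration}). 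You instead run an explicit $\delta$-net on $\bS^{m-1}$ combined with the Weyl-type bound $\abs{\lambda_{\min}(M)-\lambda_{\min}(M')}\leq\norm{M-M'}_{\mathrm{op}}$ and the \aas{} control $\max_i\norm{A_i}_{\mathrm{op}}\leq 1+\delta$; this is more elementary and self-contained, at the cost of tracking the Lipschitz constant $\sqrt{m}(1+\delta)$ explicitly (harmless since $m$ is fixed). Your explicit treatment of the ``in particular'' clause---minimum of a concave function over a ball is attained on the bounding sphere, whence $A[\gamma]\succ 0$ throughout---fills in a step the paper leaves implicit, and your observation that \cref{fact:goe_semicircle} is not needed is accurate: despite being cited in the preamble to the lemma, the paper's own proof also relies only on \cref{fact:goe_reweighting_invariant} and \cref{fact:goe_bounded_norm}.
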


Recall \cref{rem:inter-d_gamma} that for $\gamma\in\inter(\Gamma_P)$, we can write
\begin{align*}
\mathbf{d}(\gamma) = - b[\gamma]^\top A[\gamma]^{-1}b[\gamma] + c[\gamma].
\end{align*}
The next lemma notes that the first term in $\mathbf{d}(\gamma)$, i.e., $-b[\gamma]^\top A[\gamma]^{-1}b[\gamma]$, concentrates to a \emph{sphere cap} and follows from \cref{fact:goe_semicircle}.
\begin{lemma}
\label{lem:fg_sphere_cap_concentration}
Fix $r\in(0,1)$ and $\epsilon>0$. Let $A_1\dots, A_m\stackrel{\text{i.i.d.}}{\sim}\ngoe(n)$. Then, \aas,
\begin{align*}
-b[\gamma]^\top A[\gamma]^{-1}b[\gamma]\in[\phi(r) \pm \epsilon],~\forall \gamma\in r\bS^{m-1},
\end{align*}
where $\phi(r) \coloneqq 2(\sqrt{1 - r^2} - 1)$.
\end{lemma}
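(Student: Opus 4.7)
I plan to reduce the uniform statement over $\gamma\in r\bS^{m-1}$ to a pointwise limit at a single $\gamma$, then exploit rotational invariance of the NGOE to express $b[\gamma]^\top A[\gamma]^{-1}b[\gamma]$ as a diagonal Gaussian quadratic form whose limit is a Stieltjes transform of the semicircle law.

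First, I would pass from uniform concentration to pointwise via a Lipschitz-plus-net argument. On the intersection of the high-probability events provided by \cref{lem:fg_lambda_min_concentration} (so that $\lambda_{\min}(A[\gamma])\geq 1-r-\eta$ uniformly on $r\bS^{m-1}$), \cref{fact:goe_bounded_norm} (bounding each $\|A_i\|$), and standard Gaussian concentration of $\|b_i\|$, the partial derivative
\[
\partial_{\gamma_i}\bigl(b[\gamma]^\top A[\gamma]^{-1}b[\gamma]\bigr)=2\,b_i^\top A[\gamma]^{-1}b[\gamma]-b[\gamma]^\top A[\gamma]^{-1}A_iA[\gamma]^{-1}b[\gamma]
\]
is bounded by a constant $L=L(r,m,\eta)$ independent of $n$. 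A deterministic $(\epsilon/2L)$-net of $r\bS^{m-1}$ of fixed cardinality then reduces the problem to proving, for each fixed $\gamma$, that $-b[\gamma]^\top A[\gamma]^{-1}b[\gamma]\to\phi(r)$ in probability.

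Next, fix such a $\gamma$. By \cref{fact:goe_reweighting_invariant}, reweighting the $A_i$'s by an orthogonal $U\in\R^{m\times m}$ sending $\gamma$ to $re_1$ yields a fresh i.i.d.\ NGOE family, and applying the same linear mixing to the independent $b_i$'s preserves their i.i.d.\ $N(0,I_n/n)$ law. Hence we may assume $A[\gamma]=I_n+rA_1$ and $b[\gamma]=rb_1$ with $b_1$ independent of $A_1$. Conditioning on the spectrum $(\lambda_j)$ of $A_1$ and invoking \cref{fact:goe_rotation_invariant} together with the orthogonal invariance of $N(0,I_n/n)$, we may further assume $A_1=\Diag(\lambda_1,\dots,\lambda_n)$, so that
\[
b[\gamma]^\top A[\gamma]^{-1}b[\gamma]=r^2\sum_{j=1}^n\frac{(b_1)_j^2}{1+r\lambda_j}.
\]
On the a.a.s.\ event of \cref{fact:goe_bounded_norm} the coefficients $1/(1+r\lambda_j)$ are uniformly bounded, so Hanson--Wright, conditionally on the spectrum, gives $r^2\sum_j (b_1)_j^2/(1+r\lambda_j)=r^2\int(1+r\lambda)^{-1}\,d\mu_n(\lambda)+o_{\mathbb P}(1)$. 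Smoothly truncating $\lambda\mapsto(1+r\lambda)^{-1}$ outside $[-1-\eta,1+\eta]$ to a $C_c^\infty$ test function and applying \cref{fact:goe_semicircle} (combined with \cref{fact:goe_bounded_norm} to control the tail of $\mu_n$) identifies the limit as $r^2\int_{-1}^1(1+r\lambda)^{-1}\cdot(2/\pi)\sqrt{1-\lambda^2}\,d\lambda$.

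Finally, I would evaluate the limiting integral. Substituting $u=-\lambda$ and using the classical Stieltjes identity $\int_{-1}^1(2/\pi)\sqrt{1-u^2}(z-u)^{-1}\,du=2\bigl(z-\sqrt{z^2-1}\bigr)$ at $z=1/r>1$ gives
\[
r^2\int_{-1}^1\frac{(2/\pi)\sqrt{1-\lambda^2}}{1+r\lambda}\,d\lambda=2\bigl(1-\sqrt{1-r^2}\bigr)=-\phi(r),
\]
so $-b[\gamma]^\top A[\gamma]^{-1}b[\gamma]\to\phi(r)$, as claimed. The main obstacle I anticipate is the semicircle-convergence step: \cref{fact:goe_semicircle} is phrased only for $C_c^\infty$ test functions, whereas the natural integrand $1/(1+r\lambda)$ is smooth but not compactly supported. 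Resolving this requires coupling \cref{fact:goe_semicircle} with \cref{fact:goe_bounded_norm} to confine all mass of $\mu_n$ to a fixed compact interval on which the integrand can be smoothly truncated; the remaining ingredients (Lipschitz bound, $\epsilon$-net, Hanson--Wright, and the Stieltjes computation) are routine.
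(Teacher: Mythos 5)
Your proposal is correct, and the heart of it---reducing to a single matrix $I+r\bar A$ via \cref{fact:goe_reweighting_invariant}, diagonalizing, concentrating the Gaussian quadratic form around the spectral average $\int(1+r\sigma)^{-1}d\mu_n$, and then passing to the semicircle law through a smooth compactly supported truncation combined with \cref{fact:goe_bounded_norm}---is exactly the paper's pointwise argument. You differ in the two auxiliary steps. For uniformity over $r\bS^{m-1}$, the paper does not bound gradients at all: it observes that $\gamma\mapsto b[\gamma]^\top A[\gamma]^{-1}b[\gamma]$ is convex where $A[\gamma]\succ 0$ and invokes \cref{lem:convex_pw_implies_uniform}, which upgrades pointwise \aas{} convergence of convex functions to uniform convergence on compacta; this avoids your extra high-probability events controlling $\norm{b_i}$ and the derivative formula, at the cost of the dedicated convexity lemma. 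Your Lipschitz-plus-net route is more elementary and equally valid (the derivative identity and the resulting $n$-independent bound check out), though slightly heavier bookkeeping. For the concentration of $\sum_j (b_1)_j^2/(1+r\lambda_j)$ the paper just computes the conditional mean and variance and applies Chebyshev, so Hanson--Wright is correct but stronger than needed. Finally, where you cite the classical Stieltjes transform identity $\int_{-1}^{1}\tfrac{2}{\pi}\sqrt{1-u^2}\,(z-u)^{-1}du=2\bigl(z-\sqrt{z^2-1}\bigr)$ at $z=1/r$, the paper proves the same evaluation from scratch by trigonometric substitution in \cref{lem:int_is_semicircle}; both give $2(1-\sqrt{1-r^2})=-\phi(r)$, so your identification of the limit is correct.
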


We are now ready to prove \cref{thm:rank_one_edm}. The proof will observe that the gradient of $-b[\gamma]^\top A[\gamma]^{-1}b[\gamma]$ gets ``arbitrarily steep at the boundary of $\Gamma_P$'' so that any maximizer of $\mathbf{d}(\gamma)$ must lie in $\inter(\Gamma_P)$. One may compare the proof of \cref{thm:rank_one_edm} to \cref{cor:sufficiently_steep}.
\begin{proof}
[Proof of \cref{thm:rank_one_edm}]
For convenience, let $c\in\R^m$ denote the vector with $i$th coordinate $c_i$.

Fix $\delta>0$ and let $M>0$ such that $\Pr_{c}\left[\norm{c}_2\leq M\right] \geq 1 - \delta/2$.
Let $0<r_1<r_2<1$ and $\epsilon\in (0,1 - r_2)$ such that
\begin{align*}
\frac{\phi(r_1) - \phi(r_2) - 2\epsilon}{r_2 - r_1} \geq M.
\end{align*}

In the remainder of the proof, we will condition on the events that $\norm{c}_2 \leq M$,
\begin{gather*}
\lambda_{\min}(A[\gamma]) \geq 1 - r_2 - \epsilon ,~\forall \gamma\in r_2\bS^{m-1},\\
-b[\gamma]^\top A[\gamma]^{-1}b[\gamma] \geq \phi(r_1) - \epsilon,~\forall \gamma\in r_1\bS^{m-1},\text{ and}\\
-b[\gamma]^\top A[\gamma]^{-1}b[\gamma] \leq \phi(r_2) + \epsilon,~\forall \gamma\in r_2\bS^{m-1}.
\end{gather*}
By \cref{lem:fg_lambda_min_concentration,lem:fg_sphere_cap_concentration},
this holds with probability $1 - \delta$ for all $n$ large enough.

Let $\gamma\in \Gamma_P\setminus B(0,r_2)$ and let $\gamma^{(1)}$, $\gamma^{(2)}$ denote the projections of $\gamma$ onto $B(0,r_1)$ and $B(0,r_2)$ respectively.
We claim that $\mathbf{d}(\gamma^{(2)})\geq \mathbf{d}(\gamma)$. By concavity of $\mathbf{d}(\gamma)$, it suffices to show that $\mathbf{d}(\gamma^{(1)})\geq \mathbf{d}(\gamma^{(2)})$. We compute,
\begin{align*}
\mathbf{d}(\gamma^{(1)}) &= -b\left[\gamma^{(1)}\right]A\left[\gamma^{(1)}\right]^{-1}b\left[\gamma^{(1)}\right] + \ip{c,\gamma^{(1)}}\\
&\geq \mathbf{d}(\gamma^{(2)})
+ (\phi(r_1) - \epsilon) - (\phi(r_2) + \epsilon)
+ \ip{c,\gamma^{(1)} - \gamma^{(2)}}\\
&\geq \mathbf{d}(\gamma^{(2)})
+ (\phi(r_1) - \phi(r_2) - 2\epsilon)
- M (r^{(2)} - r^{(1)})\\
&\geq \mathbf{d}(\gamma^{(2)}).
\end{align*}
We conclude that $\mathbf{d}(\gamma)$ is maximized on the interior of $\Gamma_P$.
\end{proof}

\subsection{Almost exactness in a semi-random setting}
\label{subsec:semi_random}

This section considers semi-random QCQPs of the form
\begin{align}
\label{eq:sr_qcqp}
\inf_{x\in\R^n}\set{q_\obj(x):\, \begin{array}
	{l}
	q_i(x) = 0 ,\,\forall i\in[m]\\
	\norm{x}_2^2\leq 1
\end{array}}.
\end{align}
For notational convenience, define $q_{m+1}(x) \coloneqq \norm{x}_2^2 - 1$.

We will consider the following semi-random model:
First, $A_\obj, A_1,\dots, A_m$ are independently sampled from $\ngoe(n)$. Then, $b_\obj, b_1,\dots,b_m$ and $c_\obj, c_1,\dots,c_m$ are chosen arbitrarily (possibly adversarially depending on the $A_i$s).

Below, we will show that for any fixed $m$, \eqref{eq:sr_qcqp} has an ``almost'' exact SDP relaxation \aas. Specifically, we will apply ideas from \cref{cor:one_shot_obj} to prove:
\begin{proposition}
\label{thm:semi_random}
Fix $\epsilon>0$ and let Let $A_\obj,\dots,A_m\stackrel{\text{i.i.d.}}{\sim} \ngoe(n)$.
Then, \aas,
for all $b_\obj,\dots,b_m\in\R^n$ and $c_\obj,\dots,c_m$, we have
\begin{align*}
\Opt \geq \Opt_\SDP \geq \inf_{x\in\R^n}\set{q_\obj(x) - \epsilon :\, \begin{array}
	{l}
	q_i(x) \in[\pm \epsilon] ,\,\forall i\in[m]\\
	\norm{x}_2^2 \leq 1
\end{array}}.
\end{align*}
\end{proposition}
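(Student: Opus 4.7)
The inequality $\Opt \geq \Opt_\SDP$ is immediate because the SDP is a relaxation. For the other direction, it suffices to show that for every SDP-feasible pair $(x^*, t^*) \in \cS_\SDP$, there exists $\tilde x \in \R^n$ with $\norm{\tilde x}_2^2 \leq 1$, $\abs{q_i(\tilde x)} \leq \epsilon$ for $i \in [m]$, and $q_\obj(\tilde x) \leq 2t^* + \epsilon$; applying this to a minimizer of $\Opt_\SDP$ then yields $\Opt_\SDP \geq \inf_x\{q_\obj(x) - \epsilon : q_i(x) \in [\pm\epsilon], \norm{x}_2^2 \leq 1\}$. Writing $X^* \succeq x^*(x^*)^\top$ for the SDP lift and $\xi \coloneqq X^* - x^*(x^*)^\top \succeq 0$, we read off the identities $q_i(x^*) = -\ip{A_i, \xi}$ for $i \in [m]$, $q_\obj(x^*) \leq 2t^* - \ip{A_\obj, \xi}$, and $\norm{x^*}_2^2 + \tr(\xi) \leq 1$, so the construction of $\tilde x$ reduces to absorbing the defects $\ip{A_i, \xi}$.

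The plan is to dichotomize on $\tr(\xi)$, in the spirit of the approximate primal rounding of \cref{cor:one_shot_obj}. In the \emph{small-trace} regime $\tr(\xi) \leq \epsilon/2$, simply take $\tilde x = x^*$: by \cref{fact:goe_bounded_norm} and a union bound across the $m+1$ NGOE matrices (valid since $m$ is fixed and $n\to\infty$), $\norm{A_i}_{\mathrm{op}} \leq 1 + o(1)$ a.a.s., so $\abs{q_i(x^*)} = \abs{\ip{A_i, \xi}} \leq (1+o(1))\tr(\xi) \leq \epsilon$ and $q_\obj(x^*) \leq 2t^* + (1+o(1))\epsilon$. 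In the \emph{large-trace} regime $\tr(\xi) > \epsilon/2$, perform randomized rounding: decomposing $\xi = \sum_{k=1}^r \mu_k u_k u_k^\top$ spectrally and setting $z = \sum_k \sigma_k \sqrt{\mu_k}\, u_k$ with i.i.d.\ Rademacher signs $\sigma_k$, the candidate $\tilde x \coloneqq x^* + z$ has $\E[zz^\top] = \xi$ and therefore $\E[q_i(\tilde x)] = 0$ for $i \in [m]$; choosing the global sign of $z$ to oppose $x^*$ also gives $\norm{\tilde x}_2^2 \leq \norm{x^*}_2^2 + \tr(\xi) \leq 1$. A probabilistic-method argument on the expansion
\begin{align*}
q_i(\tilde x) = 2\ip{A_i x^* + b_i, z} + \bigl(z^\top A_i z - \ip{A_i, \xi}\bigr)
\end{align*}
should then produce a single sign pattern $\sigma$ realizing $\abs{q_i(\tilde x)} \leq \epsilon$ for all $i$ simultaneously.

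The main obstacle is controlling the two fluctuations above uniformly over the adversarial $(b_i, c_i)$. The Rademacher chaos $z^\top A_i z - \ip{A_i, \xi}$ is governed by the off-diagonal entries $u_k^\top A_i u_l$ of the compression $U^\top A_i U$ (with $U = (u_1,\ldots,u_r)$); by the rotational invariance \cref{fact:goe_rotation_invariant}, for any fixed orthonormal $U$ this compression is distributed as $\sqrt{r/n}\cdot\ngoe(r)$ and thus has entries $O(1/\sqrt{n})$. Promoting this pointwise bound to one uniform over the adversarial eigenframe $U$ is the technical crux: since the Grassmannian supremum $\sup_U \norm{U^\top A_i U}_{\mathrm{op}}$ equals $\norm{A_i}_{\mathrm{op}} = \Theta(1)$, a naive union bound fails. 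The intended resolution is to restrict to extreme SDP optimizers, where the Barvinok--Pataki bound forces the rank to $r = O(\sqrt{m})$ (constant in $n$ since $m$ is fixed), and then to cover $\mathrm{Gr}(r,n)$ by an $\epsilon$-net whose cardinality is dominated by the sub-Gaussian NGOE tail decay. The Rademacher linear term $2\ip{A_ix^*+b_i, z}$ enjoys Hoeffding-type concentration with variance $(A_ix^*+b_i)^\top\xi(A_ix^*+b_i)$; a uniform bound in adversarial $b_i$ follows by exploiting the SDP feasibility constraint $2b_i^\top x^* + c_i = -\ip{A_i, X^*}$, which prevents the adversary from scaling $b_i$ without destroying feasibility. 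Combining these estimates with a final union bound over the $m$ constraints then produces the required sign pattern.
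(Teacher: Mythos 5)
Your reduction to rounding an SDP optimizer and your small-trace case are fine, and the randomized-rounding idea is a genuinely different route from the paper's. But the step you yourself flag as the ``technical crux'' is a real gap, and the resolution you sketch does not close it. The quantity you must control is the off-diagonal compression $u_k^\top A_i u_l$, where the frame $U=(u_1,\dots,u_r)$ is the eigenframe of $\xi$ and is determined by the adversarial $(b_i,c_i)$ through the SDP optimizer. A union bound over an $\epsilon$-net of $\mathrm{Gr}(r,n)$ cannot work: such a net has cardinality $e^{\Theta(rn)}$, while the tail bound for a single fixed frame decays only like $e^{-\Omega(nt^2)}$, so the combined bound survives only for deviations $t=\Omega(\sqrt r)$, a constant. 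This is not an artifact of the technique --- as you note, $\sup_U\norm{U^\top A_iU}_{\mathrm{op}}=\norm{A_i}_{\mathrm{op}}=\Theta(1)$ \aas, so there genuinely exist constant-rank frames on which the chaos coefficients are order one, and nothing in your argument prevents the adversary from choosing $(b_i,c_i)$ so that the optimizer's $\xi$ lands on such a frame. Without a uniform $o(1)$ bound on these coefficients, the probabilistic-method step producing one sign pattern with $\abs{q_i(\tilde x)}\le\epsilon$ for all $i$ simultaneously does not go through; the Barvinok--Pataki bound controls the rank $r$, not the frame.

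The paper avoids this entirely by never rounding the lift. It works in the original space: using the fact that $\Gamma$ is \aas{} an approximate second-order cone (\cref{lem:semi_random_Gamma}), it reads off from the boundary condition $q(x^*)-2t^*e_\obj\in\bd(\Gamma^\circ)$ a single unit aggregation weight $(f_\obj,f,1)$; it then finds an $(m+3)$-dimensional subspace $W$ on which $w^\top A(f_\obj,f,1)w\in[\pm\epsilon]\norm{w}_2^2$ (\cref{lem:semi_random_W} --- here the union bound runs over a net of the \emph{constant-dimensional} sphere $\bS^{m}$ of aggregation weights, which is benign); it picks $w\in W$ exactly orthogonal to every gradient $A_ix^*+b_i$ by dimension counting, which neutralizes the adversarial $b_i$ with no concentration argument at all; and it steps along $w$ until the ball constraint is tight, at which point the second-order-cone geometry forces all the $q_i$ to shrink to $O(\sqrt{\epsilon})$ simultaneously. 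To salvage your approach you would need to show that the optimizer's $\xi$ is necessarily incoherent with the $A_i$, which is not established here and is not true in general.
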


In a slight departure from previous notation, we will write our dual vector as $(\gamma_\obj,\gamma,\gamma_{m+1})\in\R^{1+m+1}$ where $\gamma_{m+1}\in\R$ corresponds to the constraint $\norm{x}_2^2 \leq 1$. As in \cref{subsec:fully_gaussian}, we will emphasize the main ideas in the proof of \cref{thm:semi_random} and leave the proofs of more technical lemmas to \cref{sec:deferred_proofs_obj_appl}.

The following lemma says that in this random model, $\Gamma$ will again converge to the second-order cone. This lemma follows from \cref{lem:fg_lambda_min_concentration}.
\begin{lemma}
\label{lem:semi_random_Gamma}
Fix $r\geq 0$ and $\epsilon>0$.  Let $A_\obj,A_1\dots, A_m\stackrel{\text{i.i.d.}}{\sim}\ngoe(n)$. Then, \aas,
\begin{align*}
\lambda_{\min}(A(\gamma_\obj, \gamma,1)) \in [1-r\pm\epsilon],~\forall (\gamma_\obj,\gamma)\in r\bS^{m}.
\end{align*}
In particular, \aas,
\begin{align*}
&\set{(\gamma_\obj, \gamma, \gamma_{m+1}):\, \norm{(\gamma_\obj,\gamma)}_2 \leq (1-\epsilon)\gamma_{m+1}}\subseteq
\Gamma\\
&\qquad\subseteq\set{(\gamma_\obj, \gamma, \gamma_{m+1}):\, \norm{(\gamma_\obj,\gamma)}_2 \leq (1+\epsilon)\gamma_{m+1}}.
\end{align*}
\end{lemma}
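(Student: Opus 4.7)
The first inequality is essentially a reindexing of Lemma~\ref{lem:fg_lambda_min_concentration}. Since $q_{m+1}(x) = \norm{x}_2^2 - 1$ corresponds to $A_{m+1} = I$, we have
\[
A(\gamma_\obj, \gamma, 1) = I + \gamma_\obj A_\obj + \sum_{i=1}^m \gamma_i A_i,
\]
which matches exactly the form $I + \sum_{j=1}^{m+1}\tilde\gamma_j \tilde A_j$ from Lemma~\ref{lem:fg_lambda_min_concentration} once I relabel the $m+1$ i.i.d.\ \ngoe\ matrices $A_\obj, A_1, \ldots, A_m$ as $\tilde A_1, \ldots, \tilde A_{m+1}$ and identify $(\gamma_\obj, \gamma) \in r\bS^{m}$ with $\tilde\gamma \in r\bS^{(m+1)-1}$. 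Invoking Lemma~\ref{lem:fg_lambda_min_concentration} with $m$ replaced by $m+1$ then yields $\lambda_{\min}(A(\gamma_\obj,\gamma,1)) \in [1 - r \pm \epsilon]$ uniformly over the sphere, \aas.

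The ``in particular'' inclusions then fall out by homogeneity. Without loss of generality I will assume $\epsilon \in (0,1/2)$ and will invoke the first assertion with tolerance $\epsilon/2$ in place of $\epsilon$. Given $(\gamma_\obj, \gamma, \gamma_{m+1})$ with $\gamma_{m+1} > 0$, positive semidefiniteness of $A(\gamma_\obj, \gamma, \gamma_{m+1})$ is equivalent (after dividing by $\gamma_{m+1}$) to positive semidefiniteness of $A(\gamma_\obj', \gamma', 1)$, where $(\gamma_\obj',\gamma') \coloneqq (\gamma_\obj,\gamma)/\gamma_{m+1}$. Setting $r \coloneqq \norm{(\gamma_\obj',\gamma')}_2$, the lower bound $\lambda_{\min}(A(\gamma_\obj',\gamma',1)) \geq 1 - r - \epsilon/2$ shows positive semidefiniteness holds whenever $r \leq 1 - \epsilon$ (giving the inner inclusion), while the upper bound $\lambda_{\min}(A(\gamma_\obj',\gamma',1)) \leq 1 - r + \epsilon/2$ shows positive semidefiniteness fails whenever $r > 1 + \epsilon$ (giving the outer inclusion). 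The $\gamma_{m+1} = 0$ edge cases are handled separately: the inner inclusion forces $(\gamma_\obj, \gamma) = 0$ trivially; and on the outer side, a nonzero $(\gamma_\obj, \gamma)$ with $A(\gamma_\obj, \gamma, 0)\succeq 0$ would give $\lambda_{\min}(A(\gamma_\obj, \gamma, 1)) \geq 1$, contradicting the upper bound once rescaled to, say, $r = 2$.

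There is no real obstacle here; all the analytical work is absorbed by Lemma~\ref{lem:fg_lambda_min_concentration}, which itself rests on an $\epsilon$-net argument, concavity of $\lambda_{\min}$ as a function of the dual variable, and the semicircle law (Facts~\ref{fact:goe_bounded_norm} and~\ref{fact:goe_semicircle}). The only care required is minor parameter bookkeeping: one must shrink the tolerance fed into the first assertion (e.g.\ to $\epsilon/2$) so that it does not swallow the target thresholds $1 \pm \epsilon$ in the inclusions.
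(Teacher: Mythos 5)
Your proof is correct and follows exactly the route the paper intends: the paper's entire justification for \cref{lem:semi_random_Gamma} is the remark that it ``follows from \cref{lem:fg_lambda_min_concentration},'' and your reindexing ($A_{m+1}=I$ playing the role of $A_\obj=I$ in the fully Gaussian setting, with $m+1$ i.i.d.\ NGOE matrices) together with the homogeneity argument is precisely how that reduction goes. One small quantifier point to tidy up: in the ``in particular'' step you invoke the first assertion at $r=\norm{(\gamma_\obj',\gamma')}_2$, which varies over a continuum as the point ranges over the claimed sets, whereas each application of the first assertion is an \aas{} event for a single fixed $r$; the clean fix is to apply it only at the two radii $1-\epsilon$ and $1+\epsilon$ (with tolerance $\epsilon/2$) and then use concavity of $(\gamma_\obj,\gamma)\mapsto\lambda_{\min}(A(\gamma_\obj,\gamma,1))$ together with $\lambda_{\min}(A(0,0_m,1))=1$ to propagate the sign of $\lambda_{\min}$ to all smaller, respectively larger, radii. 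This uses only ingredients already central to \cref{lem:fg_lambda_min_concentration}, so the argument is sound.
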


The following lemma says that a version of \cref{cor:one_shot_obj} with errors holds in this setting. This lemma follows from an $\epsilon$-net argument along with \cref{fact:goe_semicircle}.
\begin{lemma}
\label{lem:semi_random_W}
Fix $\epsilon>0$ and $N\in\N$. Then, \aas,
for every $(\gamma_\obj,\gamma)\in \bS^{m}$, there exists an $N$-dimensional vector space $W\subseteq \R^n$ such that
\begin{align*}
w^\top A(\gamma_\obj,\gamma,1) w \in [\pm \epsilon]\norm{w}_2^2,\,\forall w\in W.
\end{align*}
\end{lemma}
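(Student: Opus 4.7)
The plan is to combine the rotational invariance of the NGOE ensemble (Fact \ref{fact:goe_reweighting_invariant}) with the semicircle law (Fact \ref{fact:goe_semicircle}), in an $\epsilon$-net argument over $\bS^m$. The key observation is that $A(\gamma_\obj,\gamma,1) = B(\gamma_\obj,\gamma) + I$, where $B(\gamma_\obj,\gamma) := \gamma_\obj A_\obj + \sum_{i=1}^m \gamma_i A_i$; so finding a subspace $W$ on which the quadratic form of $A(\gamma_\obj,\gamma,1)$ is within $[\pm \epsilon]\|w\|^2$ of zero is equivalent to finding a subspace of eigenvectors of $B(\gamma_\obj,\gamma)$ with eigenvalues in $[-1-\epsilon, -1+\epsilon]$. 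Moreover, for any fixed unit vector $(\gamma_\obj,\gamma)$, extending it to the first row of an orthogonal matrix and invoking Fact \ref{fact:goe_reweighting_invariant} shows $B(\gamma_\obj,\gamma) \sim \ngoe(n)$.

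The pointwise argument would then proceed as follows. Pick $\psi \in C_c^\infty(\R)$ with $0 \leq \psi \leq \mathbf{1}_{[-1,-1+\epsilon/2]}$ and $c_\psi := \int \psi\, d\mu_\nsc > 0$ (such $\psi$ exists since $\mu_\nsc$ has strictly positive density on $(-1,1)$). By Fact \ref{fact:goe_semicircle}, \aas $\int \psi\, d\mu_n \geq c_\psi/2$, which gives a lower bound of $c_\psi n/2$ on the number of eigenvalues of $B(\gamma_\obj,\gamma)$ lying in $[-1,-1+\epsilon/2]$. For $n$ sufficiently large (depending only on $\epsilon$ and $N$), this count exceeds $N$, so we may take $W$ to be the span of $N$ such eigenvectors; for any $w \in W$, $w^\top A(\gamma_\obj,\gamma,1)w \in [0, (\epsilon/2)]\|w\|^2$.

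To upgrade the pointwise guarantee to a uniform one, take a $\delta$-net $\mathcal{N}$ of $\bS^m$ with $|\mathcal{N}| \leq C_m \delta^{-m}$ points (a number independent of $n$). Apply the pointwise argument at each net point with error parameter $\epsilon/2$; a union bound yields that \aas, for every $(\gamma_\obj', \gamma') \in \mathcal{N}$, a suitable $N$-dimensional subspace $W'$ exists. By Fact \ref{fact:goe_bounded_norm}, \aas $\max_i \|A_i\|_{op} \leq 2$. Given any $(\gamma_\obj,\gamma) \in \bS^m$, pick the closest $(\gamma_\obj',\gamma') \in \mathcal{N}$ and take $W := W'$; then Cauchy--Schwarz gives
\begin{align*}
\|A(\gamma_\obj,\gamma,1) - A(\gamma_\obj',\gamma',1)\|_{op}
\leq \sqrt{\textstyle\sum_i(\gamma_i - \gamma_i')^2}\cdot\sqrt{\textstyle\sum_i \|A_i\|_{op}^2}
\leq 2\sqrt{m+1}\,\delta,
\end{align*}
so choosing $\delta \leq \epsilon/(4\sqrt{m+1})$ ensures that for $w \in W$, $|w^\top A(\gamma_\obj,\gamma,1)w - w^\top A(\gamma_\obj',\gamma',1)w| \leq (\epsilon/2)\|w\|^2$, and hence $|w^\top A(\gamma_\obj,\gamma,1)w| \leq \epsilon\|w\|^2$. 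The main obstacle is converting the integral form of the semicircle law (which concerns $\int\psi\,d\mu_n$ for smooth $\psi$) into a genuine lower bound of $N$ on the \emph{count} of eigenvalues in a specific interval; this is handled by sandwiching $\mathbf{1}_{[-1,-1+\epsilon/2]}$ below by a smooth bump $\psi$ with $\int\psi\,d\mu_\nsc > 0$, after which the rest of the argument is a fairly standard net/perturbation estimate that depends only on data that stabilizes to deterministic quantities \aas.
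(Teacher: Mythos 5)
Your proposal is correct and follows essentially the same route as the paper: a pointwise eigenvalue-count argument via a smooth bump and the semicircle law (the paper factors this out as a separate single-matrix lemma), rotational invariance to reduce to a single NGOE matrix, a union bound over a finite net of $\bS^m$, and an operator-norm perturbation step. The only cosmetic difference is that you control the perturbation via Cauchy--Schwarz on the individual $\norm{A_i}$ bounds, whereas the paper invokes its uniform concentration bound on $\norm{A(\gamma_\obj,\gamma,0)}$ over all directions; both suffice since $m$ is fixed.
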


With \cref{lem:semi_random_Gamma,lem:semi_random_W}, we may now prove \cref{thm:semi_random}.
\begin{proof}[Proof of \cref{thm:semi_random}]
Without loss of generality, we assume $\epsilon\in(0,1/2)$ and $b_\obj,b_1,\dots,b_m$, $c_\obj, c_1,\dots,c_m$ are picked so that the SDP relaxation is feasible, i.e.,
\begin{align}
\label{eq:semi_random_sdp_rel}
\infty > \inf_{x\in\R^n}\sup_{(\gamma,\gamma_{m+1})\in\Gamma_P} [(\gamma,\gamma_{m+1}), q(x)].
\end{align}
Let $x^*$ denote an optimizer of \eqref{eq:semi_random_sdp_rel} with value $2t^*$.
Consider the vector $q(x^*) - 2t^*e_\obj \in \R^{1+m+1}$. Without loss of generality, we may assume that $q(x^*) - 2t^*e_\obj$ is both nonzero and on the boundary of $\Gamma^\circ$.
By \cref{lem:semi_random_Gamma} and the assumption that $q(x^*)-2t^*e_\obj\in\bd(\Gamma^\circ)$, we have
\begin{align*}
\tau \coloneqq \sqrt{(q_\obj(x^*) - 2t^*)^2 + \sum_{i=1}^m q_i(x^*)^2} \in [1\pm\epsilon] q_{m+1}(x^*).
\end{align*}
Next, as $q(x^*) - 2t^*e_\obj$ is nonzero, we have that $0 < q_{m+1}(x^*) = 1 - \norm{x^*}^2$, i.e., $\norm{x^*}^2 < 1$.
Hence, by definition of $\tau$, we have $|\tau|\leq 1+\epsilon$.

Set $(f_\obj, f, f_{m+1}) \coloneqq \left(\frac{q_\obj(x^*) - 2t^*}{\tau}, \frac{q_1(x^*)}{\tau},\dots, \frac{q_m(x^*)}{\tau}, 1\right)$ so that $\norm{(f_\obj,f)}_2 = 1$.

Note that by \cref{lem:semi_random_W}, there exists a subspace $W$ of dimension $m+3$ such that
\begin{align*}
w^\top A(f_\obj,f,f_{m+1}) w \in [\pm\epsilon]\norm{w}_2^2 ,~\forall w\in W.
\end{align*}
By a dimension counting argument, there exists a unit $w\in W$ satisfying \begin{align}
\label{eq:w_orthogonal}
\ip{A(\gamma_\obj,\gamma,\gamma_{m+1})x^* + b(\gamma_\obj,\gamma,\gamma_{m+1}), w} = 0
,~\forall (\gamma_\obj,\gamma,\gamma_{m+1}) \in \R^{1+m+1}.
\end{align} 

Then, for this vector $w$ we have
\begin{align}
\label{eq:w_quadratic}
\begin{cases}
	w^\top A(f_\obj, f, 1)w\in[\pm\epsilon],\\
	w^\top A(0, 0_m, 1) w = 1,\text{ and}\\
	w^\top A(\gamma_\obj, \gamma, 1)w \geq 0,~\forall (\gamma_\obj,\gamma)\in (1-\epsilon)\bS^{m}.
\end{cases}
\end{align}
Here, the first two relations follow from $\norm{w}_2^2 = 1$. The third relation follows from \cref{lem:semi_random_Gamma}, which implies that $A(\gamma_\obj,\gamma,1)\succeq 0$ for all $(\gamma_\obj,\gamma)\in(1-\epsilon)\bS^{m}$.

Set $v_\obj \coloneqq w^\top A_\obj w$ and $v\in\R^m$ where $v_i\coloneqq w^\top A_i w$ for $i\in[m]$.
Note that by \eqref{eq:w_orthogonal}, we have
\begin{align*}
q(x^* + \alpha w) - 2t^*e_\obj &= \left(q(x^*) - 2t^*e_\obj\right) + \alpha^2 (v_\obj, v, 1).
\end{align*}
Then, by the first two lines of \eqref{eq:w_quadratic},
\begin{align*}
	&\ip{(v_\obj, v), (f_\obj, f)} = f_\obj w^\top A_\obj w + \sum_{i=1}^m f_i w^\top A_i w\\
	&\qquad = w^\top A(f_\obj,f,1)w - w^\top w \in [-1 \pm \epsilon].
\end{align*}
Next, by the third line of \eqref{eq:w_quadratic}, we have
$\norm{(v_\obj,v)}_2 \leq 1/(1-\epsilon)$.
Set $(\delta_\obj,\delta) \coloneqq (v_\obj,v) + (f_\obj, f)$. We will argue that $(\delta_\obj,\delta)$ is small by bounding its components along $(f_\obj,f)$ and orthogonal to $(f_\obj,f)$,
\begin{align*}
\norm{(\delta_\obj,\delta)}^2 \leq \epsilon^2 + \left(\frac{1}{(1-\epsilon)^2} - (1-\epsilon)^2\right) = O(\epsilon).
\end{align*}

Finally, set $\tilde x = x^* + \alpha w$ where $\alpha = \sqrt{1 - \norm{x^*}^2}$ and note that
\begin{align*}
q(\tilde x) - 2t^* e_\obj &= q(x^*) - 2t^* e_\obj + (1-\norm{x^*}_2^2)\left(v_\obj, v,	1\right)\\
&= q(x^*) - 2t^*e_\obj + (1-\norm{x^*}_2^2)e_{m+1} +
\tau(v_\obj, v, 0)\\
&\qquad + (1-\norm{x^*}_2^2 - \tau)(v_\obj, v, 0)\\
&= q(x^*) - 2t^*e_\obj - (\tau f_\obj, \tau f, \norm{x^*}_2^2 - 1)\\
&\qquad + \tau (\delta_\obj, \delta, 0)
+ (1-\norm{x^*}_2^2 - \tau)(v_\obj, v,0)\\
&= \tau (\delta_\obj, \delta, 0)
+ (1-\norm{x^*}_2^2 - \tau)(v_\obj, v,0).
\end{align*}
The conclusion then follows from the bounds $\abs{\tau} \leq (1+\epsilon)$, $\norm{(\delta_\obj,\delta)}_2 =O(\sqrt{\epsilon})$, $\abs{1- \norm{x^*}_2^2 -\tau}\leq \epsilon$ and $\norm{(v_\obj, v)}_2\leq 1/(1-\epsilon)$.
\end{proof} 
\section*{Acknowledgments}
This research is supported in part by NSF grant CMMI 1454548 and ONR grant N00014-19-1-2321. 

{
\bibliographystyle{plainnat}

}

\begin{appendix}

 \section{Deferred proofs from \cref{sec:conv_applications}}
\label{sec:deferred_proofs_conv_appl}

\subsection{Deferred proofs from \cref{sec:mixed_binary_programming}}

We compute
\begin{align*}
\Gamma &= \set{(\gamma_\obj,\gamma_1,\gamma_2)\in\R_+\times \R^2:\, \begin{pmatrix}
	\gamma_1&\gamma_2/\sqrt{2}\\
	\gamma_2/\sqrt{2}& \gamma_\obj
\end{pmatrix}\succeq 0}\\
&= \set{(\gamma_\obj,\gamma_1,\gamma_2)\in\R^3:\, \begin{array}
	{l}
	\gamma_\obj + \gamma_1 \geq 0\\
	2\gamma_\obj\gamma_1 \geq \gamma_2^2
\end{array}}\\
&=\set{(\gamma_\obj,\gamma_1,\gamma_2)\in\R^3:\, \gamma_\obj + \gamma_1 \geq \sqrt{(\gamma_\obj - \gamma_1)^2  + (\sqrt{2}\gamma_2)^2}}.
\end{align*}
The expression for $\Gamma^\circ$ follows from $\Gamma$.

\begin{proof}
[Proof of \eqref{eq:R_description_mixed_binary}]
Let $(x,t)\in\cS_\SDP\setminus\cS$ such that
$\cG(x,t)$ is a one-dimensional face of $\Gamma^\circ$.
For notational convenience, let $\ell_\obj = q_\obj(x) - 2t$, $\ell_1 = q_1(x)$ and $\ell_2 = q_2(x)$.
Note that $\cG(x,t) = \R_+(\ell_\obj,\ell_1,\ell_2)$
so that $\cF(x,t) = \R_+(-\ell_1,-\ell_\obj,\ell_2)$.
Furthermore, by the assumption that $(\ell_\obj,\ell_1,\ell_2)$ is nonzero and on the boundary of $\Gamma^\circ$, we have
\begin{align*}
\cG(x,t)^\perp = \spann\set{\begin{pmatrix}
	-\ell_1\\
	-\ell_\obj\\
	\ell_2
\end{pmatrix},\, \begin{pmatrix}
	\ell_2\\
	-\ell_2\\
	\ell_1 - \ell_\obj
\end{pmatrix}}.
\end{align*}
We deduce that
\begin{align}
\label{eq:mixed_binary_calc_1}
\cR'(x,t) &= \left\{\begin{pmatrix}
	-\ell_\obj\\
	\ell_2/\sqrt{2}\\
	0
\end{pmatrix},\begin{pmatrix}
	\ell_2/\sqrt{2}\\
	-\ell_1\\
	0
\end{pmatrix},\right.\\
&\qquad\quad \left.\begin{pmatrix}
	-\ell_\obj (2x_1 - 1) + \ell_2 (\sqrt{2}x_2)\\
	-\ell_1 (2x_2) +\sqrt{2} \ell_2(x_1-1)\\
	2\ell_1
\end{pmatrix},
\begin{pmatrix}
	-\ell_2 (2x_1 - 1) + (\ell_1-\ell_\obj) (\sqrt{2}x_2)\\
	\ell_2 (2x_2) +\sqrt{2} (\ell_1-\ell_\obj)(x_1-1)\\
	-2\ell_2
\end{pmatrix}\right\}^\perp.
\end{align}
Here, the first two vectors span $\spann(A(f_\obj,f))$. The second two vectors correspond to the constraints $\ip{A(\gamma_\obj,\gamma)x, x'} - \gamma_\obj t' = 0$ for $(\gamma_\obj,\gamma)\in\cG(x,t)^\perp$.

Below, we will simplify this expression. By the assumption that $(\ell_\obj,\ell_1,\ell_2)$ is nonzero and on the boundary of $\Gamma^\circ$, we have
\begin{align*}
-\ell_\obj - \ell_1 = \sqrt{(\ell_\obj-\ell_1)^2 + (\sqrt{2}\ell_2)^2}
\end{align*}
where the term within the radical is nonzero. Expanding, we deduce that
\begin{align*}
\begin{cases}
	\ell_\obj + \ell_1 < 0\\
	\ell_2^2 - 2\ell_\obj\ell_1 = 0
\end{cases}
=
\begin{cases}
x_2^2 - 2t + x_1(x_1-1) < 0\\
(x_2^2 - 2tx_1)(x_1-1) = 0
\end{cases}.
\end{align*}
Note that $(0,1,0)\in\Gamma$ so that $x_1\in[0,1]$. 
If $x_1 = 1$, then $\ell_\obj <0$, $\ell_1 = 0$ and $\ell_2 = 0$ so that $(x_1,x_2,t)\in\cS$, a contradiction.
We deduce $1-x_1>0$ and $x_2^2 - 2tx_1=0$ and that
$(\ell_\obj,\ell_1,\ell_2) = (x_1-1) (2t, x_1, \sqrt{2}x_2)$. Plugging this into \eqref{eq:mixed_binary_calc_1} gives
\begin{align*}
\cR'(x,t) &= \left\{\begin{pmatrix}
	-2t\\
	x_2\\
	0
\end{pmatrix},\begin{pmatrix}
	x_2\\
	-x_1\\
	0
\end{pmatrix},\begin{pmatrix}
	t\\
	-x_2\\
	x_1
\end{pmatrix},
\begin{pmatrix}
	x_2(x_1-1 + 2t)\\
	- x_1^2+x_1 -2tx_1 - 2t\\
	2x_2
\end{pmatrix}\right\}^\perp.\qedhere
\end{align*}
\end{proof}

\subsection{Deferred proofs from \cref{sec:partition_problem}}
 
We will prove \cref{lem:partition_explicit} in the following series of lemmas. Note that the first identity of \cref{lem:partition_explicit} follows from definition.
To prove the second identity of \cref{lem:partition_explicit}, we will partition $\Gamma_P$ into $n + 1$ pieces depending on the sign pattern of $\gamma\in\Gamma_P$.

Note that $\gamma\in\Gamma_P$ if and only if $aa^\top + \Diag(\gamma)\succeq 0$. In particular, $\gamma\in\Gamma_P$ if $\gamma$ is nonnegative. On the other hand, by the Eigenvalue Interlacing Theorem, $\gamma\notin\Gamma_P$ if it has at least two negative coordinates. It remains to understand $\cN_i \coloneqq \Gamma_P \cap \set{\gamma\in\R^n:\, \gamma_i <0,\,\gamma_i\geq 0,\,\forall j\neq i}$.
The next lemma follows from a straightforward application of the Schur Complement Lemma and the Sherman--Morrison Formula.

\begin{lemma}
Suppose \cref{as:partition_positive_weights} holds. Then, for any $i\in[n]$,
\begin{align*}
\cN_i = \set{\gamma\in\R^n:\, \begin{array}
	{l}
	0>\gamma_i \geq \frac{-a_i^2}{1 + \sum_{j\neq i} a_j^2 / \gamma_j}\\
	\gamma_j >0,\,\forall j\neq i\\
\end{array}}.
\end{align*}
\end{lemma}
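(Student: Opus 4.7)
Proof plan: The statement has two features to establish for $\gamma\in\cN_i$: (i) that $\gamma_j$ must be \emph{strictly} positive for each $j\neq i$ (the defining condition of $\cN_i$ only asks $\gamma_j\geq 0$), and (ii) the explicit lower bound on the negative coordinate $\gamma_i$. Both drop out of the structure of $aa^\top+\Diag(\gamma)$ after a block decomposition.

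For (i), I would use the Cauchy interlacing argument that was already invoked in the preceding discussion. If some $j\neq i$ had $\gamma_j=0$, then the $2\times 2$ principal submatrix of $aa^\top+\Diag(\gamma)$ on rows/columns $\{i,j\}$ would be
\begin{align*}
\begin{pmatrix} a_i^2+\gamma_i & a_ia_j\\ a_ia_j & a_j^2\end{pmatrix},
\end{align*}
whose determinant equals $a_j^2\gamma_i<0$ (using $a_j>0$ from \cref{as:partition_positive_weights} and $\gamma_i<0$). This contradicts positive semidefiniteness, so every $\gamma_j$ with $j\neq i$ is strictly positive.

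For (ii), after permuting coordinates to put $i$ first, write
\begin{align*}
aa^\top+\Diag(\gamma)=\begin{pmatrix} a_i^2+\gamma_i & a_i a_{-i}^\top\\ a_i a_{-i} & a_{-i}a_{-i}^\top+\Diag(\gamma_{-i})\end{pmatrix},
\end{align*}
where $a_{-i},\gamma_{-i}\in\R^{n-1}$ collect the remaining entries. Since $\gamma_{-i}>0$ by step (i), the lower-right block is the sum of a positive definite diagonal and a positive semidefinite rank-one matrix and is therefore positive definite. The Schur complement lemma then says $aa^\top+\Diag(\gamma)\succeq0$ if and only if
\begin{align*}
a_i^2+\gamma_i \;\geq\; a_i^2\cdot a_{-i}^\top\bigl(a_{-i}a_{-i}^\top+\Diag(\gamma_{-i})\bigr)^{-1}a_{-i}.
\end{align*}

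The last step is to simplify the quadratic form on the right using the Sherman--Morrison formula. Writing $s\coloneqq a_{-i}^\top\Diag(\gamma_{-i})^{-1}a_{-i}=\sum_{j\neq i}a_j^2/\gamma_j$, the formula gives
\begin{align*}
a_{-i}^\top\bigl(a_{-i}a_{-i}^\top+\Diag(\gamma_{-i})\bigr)^{-1}a_{-i}=s-\frac{s^2}{1+s}=\frac{s}{1+s},
\end{align*}
so the Schur complement condition becomes $\gamma_i\geq a_i^2\bigl(\tfrac{s}{1+s}-1\bigr)=-a_i^2/(1+s)$, which is exactly the claimed bound. There is no real obstacle here: the only thing to watch is the strictness of $\gamma_j>0$ for $j\neq i$ (needed to make both the Sherman--Morrison application and the Schur complement statement go through), and that has already been handled in step (i).
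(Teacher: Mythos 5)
Your proposal is correct and follows essentially the same route as the paper's proof: the $2\times 2$ principal minor argument to force $\gamma_j>0$ for $j\neq i$, followed by the Schur complement lemma and the Sherman--Morrison formula to reduce the semidefiniteness condition to the stated scalar inequality on $\gamma_i$. The only cosmetic difference is that you carry out the Sherman--Morrison simplification via the scalar $s=\sum_{j\neq i}a_j^2/\gamma_j$ rather than writing out the matrix identity, which is equivalent.
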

\begin{proof}
Without loss of generality we assume $i = n$. For convenience, let $\bar\gamma$ and $\bar a$ denote the first $n - 1$ entries of $\gamma$ and $a$ respectively. By \cref{as:partition_positive_weights}, we have that $\gamma_j>0$ for all $j<n$ (as otherwise the $2\by 2$ minor of $aa^\top + \Diag(\gamma)$ corresponding to $(j,n)$ is not positive semidefinite).
The Schur Complement Lemma and the Sherman--Morrison Formula then imply that $\gamma\in\cN_n$ if and only if $\gamma_n<0$, $\bar\gamma>0$ and
\begin{align*}
\gamma_n + a_n^2 &\geq a_n^2 \bar a^\top (\bar a \bar a^\top + \Diag(\bar \gamma))^{-1} \bar a\\
&= a_n^2 \bar a^\top \left(\Diag(\bar\gamma)^{-1} - \tfrac{\Diag(\bar\gamma)^{-1}\bar a \bar a^\top \Diag(\bar\gamma)^{-1}}{1 + \bar a^\top \Diag(\bar \gamma)^{-1}\bar a}\right) \bar a\\
&= a_n^2\tfrac{\bar a^\top \Diag(\bar\gamma)^{-1}\bar a}{1 + \bar a^\top \Diag(\bar\gamma)^{-1}\bar a}.
\end{align*}
Rearranging terms completes the proof.
\end{proof}

Then decomposing $\Gamma_P = \R^n_+ \cup \bigcup_{i\in[n]}\cN_i$, we get
\begin{align*}
\cS_\SDP &= \set{(x,t)\in\R^n:\, \begin{array}
	{l}
	2t \geq \max_{i\in[n]}\sup_{\gamma\in\cN_i} [\gamma,q(x)]\\
	x\in[\pm1]^n
\end{array}}
\end{align*}

It remains to prove the following lemma.
\begin{lemma}
\label{lem:partition_sup_calculation}
Suppose \cref{as:partition_positive_weights} holds and let $i\in[n]$. For any $x\in[\pm1]^n$, we have
\begin{align}
\label{eq:partition_epigraph_piece}
\sup_{\gamma\in\cN_i}[\gamma,q(x)] = (a^\top x)^2 + \left(a_i \sqrt{1-x_i^2} - \sum_{j \neq i} a_j \sqrt{1 - x_j^2}\right)^2_+.
\end{align}
\end{lemma}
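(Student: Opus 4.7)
Fix $i\in[n]$. Since $(a^\top x)^2$ does not depend on $\gamma$, it suffices to evaluate the supremum $\sup_{\gamma\in\cN_i}\sum_j \gamma_j(x_j^2-1)$. For notational convenience, I would set $h\coloneqq -\gamma_i>0$, $g_j\coloneqq\gamma_j>0$ for $j\neq i$, $u_j\coloneqq 1-x_j^2\in[0,1]$, and $s\coloneqq \sum_{j\neq i}a_j^2/g_j$. The explicit description of $\cN_i$ from the preceding lemma lets me rewrite the target supremum as
\[
\sup\!\left\{h u_i - \sum_{j\neq i}g_j u_j:\ 0<h\leq \tfrac{a_i^2}{1+s},\ g_j>0\right\}.
\]
Because the objective is linear and nondecreasing in $h$ (as $u_i\geq 0$), the upper-bound constraint on $h$ will be tight at the optimum, so I can eliminate $h$ and reduce to maximizing $F(g)\coloneqq \tfrac{a_i^2 u_i}{1+s(g)}-\sum_{j\neq i}g_j u_j$ over $g_j>0$.

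The key step will be an application of Cauchy--Schwarz via the decomposition $a_j\sqrt{u_j} = \sqrt{g_j u_j}\cdot a_j/\sqrt{g_j}$:
\[
T^2 \coloneqq \Bigl(\sum_{j\neq i} a_j\sqrt{u_j}\Bigr)^{\!2} \leq \Bigl(\sum_{j\neq i} g_j u_j\Bigr)\cdot s(g),
\]
which yields the upper bound $F(g)\leq \tfrac{a_i^2 u_i}{1+s}-\tfrac{T^2}{s}$ and reduces the problem to a one-variable supremum in $s>0$. A routine calculus computation will show that when $a_i\sqrt{u_i}>T$ this one-variable function has a unique stationary point $s^* = T/(a_i\sqrt{u_i}-T)$ with value $(a_i\sqrt{u_i}-T)^2$, and that when $a_i\sqrt{u_i}\leq T$ the function is strictly negative with supremum $0$ approached as $s\to\infty$. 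Either way the upper bound equals $(a_i\sqrt{u_i}-T)_+^2$, matching the claimed right-hand side of \eqref{eq:partition_epigraph_piece}.

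To see that the bound is sharp, I would construct an optimizing sequence of $g_j$ that saturates Cauchy--Schwarz. Equality there holds when $g_j^2 u_j/a_j^2$ is constant across $j$; so for $j\neq i$ with $u_j>0$ I would take $g_j = \lambda\, a_j/\sqrt{u_j}$ with $\lambda>0$ tuned so that $s(g)$ equals (in Case $a_i\sqrt{u_i}>T$) or approaches (in Case $a_i\sqrt{u_i}\leq T$) the optimal $s^*$, and for $j\neq i$ with $u_j=0$ I would send $g_j\to\infty$ so those indices contribute to neither the objective nor to $s(g)$ in the limit. The main technical subtlety will be handling the degenerate cases, in particular $u_i=0$ (where both sides are trivially zero) and $a_i\sqrt{u_i}=T$ (where the supremum is $0$ but only attained in the limit); but the Cauchy--Schwarz framework will make the core computation transparent.
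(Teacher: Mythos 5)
Your proposal is correct and follows essentially the same route as the paper's proof: push $\gamma_i$ to the boundary of $\cN_i$, apply Cauchy--Schwarz to bound $\sum_{j\neq i}\gamma_j(1-x_j^2)$ from below in terms of $s=\sum_{j\neq i}a_j^2/\gamma_j$ (this is the content of the paper's \cref{lem:partition_useful}, after the cosmetic change of variables $\zeta_j = a_j^2/\gamma_j$), and then solve the resulting one-variable problem in $s$ (the paper's \cref{lem:optimizing_over_alpha}, whose degenerate case $T=0$ you should fold into your ``routine calculus'' step). The only difference is bookkeeping; your explicit treatment of the attainment/limiting cases is, if anything, slightly more careful than the paper's.
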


We will need the following two useful facts.
\begin{lemma}
\label{lem:partition_useful}
Let $\xi\in\R^{k}_-$ and $\alpha >0$, then
\begin{align}
\sup_{\zeta\in\R^{k}_{++}}\set{\sum_{i=1}^{k} \frac{\xi_i}{\zeta_i^2 }:\, \sum_{i=1}^k \zeta_i^2 = \alpha} &= -\frac{1}{\alpha}\left(\sum_{i=1}^{k} \sqrt{-\xi_i}\right)^2.
\end{align}
\end{lemma}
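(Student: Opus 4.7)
The plan is to convert the constrained supremum into a standard optimization and then apply the Cauchy--Schwarz inequality. The key reformulation is the substitution $u_i \coloneqq \zeta_i^2$, which is a bijection between $\R^k_{++}$ and itself. Under this substitution, the problem becomes
\begin{align*}
\sup_{u\in\R^k_{++}}\set{\sum_{i=1}^{k} \frac{\xi_i}{u_i}:\, \sum_{i=1}^k u_i = \alpha},
\end{align*}
i.e., the problem of minimizing the convex function $u\mapsto \sum_i (-\xi_i)/u_i$ over the (relatively open) simplex of size $\alpha$.

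Next, I would invoke Cauchy--Schwarz in the form
\begin{align*}
\left(\sum_{i=1}^k \sqrt{-\xi_i}\right)^2 =\left(\sum_{i=1}^k \sqrt{u_i}\cdot \sqrt{-\xi_i/u_i}\right)^2 \leq \left(\sum_{i=1}^k u_i\right)\left(\sum_{i=1}^k \frac{-\xi_i}{u_i}\right) = \alpha\sum_{i=1}^k \frac{-\xi_i}{u_i},
\end{align*}
which immediately gives the upper bound $\sum_i \xi_i/u_i \leq -\tfrac{1}{\alpha}\left(\sum_i \sqrt{-\xi_i}\right)^2$ for every feasible $u$. This handles the ``$\leq$'' direction cleanly without requiring Lagrange multipliers or convex duality.

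For the reverse direction, I would exhibit a feasible sequence attaining the bound in the limit. In the case where every $\xi_i<0$, the equality condition in Cauchy--Schwarz (proportionality of $\sqrt{u_i}$ and $\sqrt{-\xi_i/u_i}$) suggests taking $u_i \coloneqq \alpha\sqrt{-\xi_i}/\sum_{j}\sqrt{-\xi_j}$; plugging this $u$ into the objective verifies that the bound is attained. The only mild subtlety---and the one point worth checking carefully---is the possibility that some coordinates of $\xi$ vanish, in which case the proposed $u$ has zero components and is not feasible. This is easily handled by a perturbation: for such coordinates $i$, set $u_i = \epsilon/k$ and distribute the remaining budget $\alpha - \epsilon$ among the strictly negative coordinates according to the Cauchy--Schwarz equality condition; the resulting objective value tends to $-\tfrac{1}{\alpha}\left(\sum_{i:\xi_i<0}\sqrt{-\xi_i}\right)^2 = -\tfrac{1}{\alpha}\left(\sum_{i}\sqrt{-\xi_i}\right)^2$ as $\epsilon\downarrow 0$, so the supremum (though not in general attained) equals the claimed value.
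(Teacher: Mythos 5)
Your proposal is correct and follows essentially the same route as the paper: the paper also reduces to the case $\xi\in\R^k_{--}$ and applies Cauchy--Schwarz in exactly this form, identifying the same proportionality condition for equality. Your explicit perturbation argument for coordinates with $\xi_i=0$ (where the supremum is approached but not attained) is a slightly more careful treatment of the step the paper dispatches with ``without loss of generality.''
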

\begin{proof}
Without loss of generality, we may assume $\xi\in\R^{k}_{--}$. Then by Cauchy-Schwarz, we have $-\sum_{i=1}^{k} \xi_i/\zeta_i^2 = -\tfrac{1}{\alpha}\left(\sum_{i=1}^{k}\xi_i/\zeta_i^2\right)\left(\sum_{i=1}^{k} \zeta_i^2\right)\geq \tfrac{1}{\alpha}\left(\sum_{i=1}^{k} \sqrt{-\xi_i}\right)^2$. Furthermore, equality holds when $\zeta_i^2\propto\sqrt{\xi_i}$.
\end{proof}

\begin{lemma}
\label{lem:optimizing_over_alpha}
Let $\alpha,\beta \geq 0$, then
\begin{align*}
\sup_{x>0}\left(\frac{\alpha}{1+x} - \frac{\beta}{x}\right) = \left(\sqrt{\alpha} - \sqrt{\beta}\right)_+^2.
\end{align*}
\end{lemma}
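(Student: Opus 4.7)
The plan is to proceed by elementary one-variable calculus, splitting on whether $\alpha$ and $\beta$ are zero and on the sign of $\sqrt{\alpha}-\sqrt{\beta}$. First, I would dispatch the degenerate cases: if $\beta = 0$, the function $f(x) \coloneqq \alpha/(1+x) - \beta/x = \alpha/(1+x)$ is decreasing with $\sup_{x>0} f(x) = \alpha = (\sqrt{\alpha})_+^2$, matching the right-hand side; if $\alpha = 0$, then $f(x) = -\beta/x \le 0$ with $\sup_{x>0}f(x) = 0$ (achieved in the limit $x\to\infty$), again matching. So I may assume $\alpha,\beta>0$.

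Under this assumption, differentiating gives
\begin{equation*}
f'(x) = -\frac{\alpha}{(1+x)^2} + \frac{\beta}{x^2},
\end{equation*}
and the equation $f'(x)=0$ rearranges to $\sqrt{\beta}\,(1+x) = \sqrt{\alpha}\,x$ (taking the positive square root on each side, which is the relevant branch for $x>0$). This has a solution in $(0,\infty)$ precisely when $\sqrt{\alpha}>\sqrt{\beta}$, namely $x^* = \sqrt{\beta}/(\sqrt{\alpha}-\sqrt{\beta})$, with $1+x^* = \sqrt{\alpha}/(\sqrt{\alpha}-\sqrt{\beta})$.

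The remaining step is to consider the two subcases. If $\alpha\le\beta$, then $f'(x) = \beta/x^2 - \alpha/(1+x)^2 \ge (\beta-\alpha)/x^2\ge 0$ for all $x>0$ (since $(1+x)^2>x^2$), so $f$ is nondecreasing and $\sup_{x>0} f(x) = \lim_{x\to\infty}f(x)=0$, which equals $(\sqrt{\alpha}-\sqrt{\beta})_+^2$. If $\alpha>\beta$, then $f'(x)>0$ for $x<x^*$ and $f'(x)<0$ for $x>x^*$ (which can be read off from the rearranged first-order equation), so $x^*$ is the unique maximizer. I would then just substitute $x^*$ and $1+x^*$ into $f$ and simplify:
\begin{equation*}
f(x^*) = \frac{\alpha(\sqrt{\alpha}-\sqrt{\beta})}{\sqrt{\alpha}} - \frac{\beta(\sqrt{\alpha}-\sqrt{\beta})}{\sqrt{\beta}} = (\sqrt{\alpha}-\sqrt{\beta})^2 = (\sqrt{\alpha}-\sqrt{\beta})_+^2.
\end{equation*}

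There is no real obstacle here; the only place to be slightly careful is in the case analysis $\sqrt{\alpha}\lessgtr\sqrt{\beta}$, which governs whether the interior critical point exists or whether the supremum is attained only asymptotically.
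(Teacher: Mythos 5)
Your proof is correct and follows essentially the same route as the paper's: the same case split on $\beta=0$, $\alpha\le\beta$, and $\alpha>\beta$, the same monotonicity bound $f'(x)\ge(\beta-\alpha)/x^2\ge 0$ in the middle case, and the same critical point $x^*=\sqrt{\beta}/(\sqrt{\alpha}-\sqrt{\beta})$ with value $(\sqrt{\alpha}-\sqrt{\beta})^2$ in the last. The only cosmetic difference is that you dispatch $\alpha=0$ separately, which the paper absorbs into the case $0\le\alpha\le\beta$.
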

\begin{proof}
Let $f(x)\coloneqq \alpha(1+x)^{-1} - \beta x^{-1}$. Note that $\tfrac{d}{dx}f(x) = -\alpha(1+x)^{-2}  + \beta x^{-2}$. There are three cases to consider.
If $\beta = 0$, then $f(x) = \alpha(1+x)^{-1}$ and $\sup_{x>0} \alpha(1+x)^{-1} = \alpha$.
Next, suppose $0\leq \alpha \leq \beta$, then $\tfrac{d}{dx} f(x) = -\alpha(1+x)^{-2} + \beta x^{-2} \geq \beta(x^{-2} - (1+x)^{-2}) \geq 0$ so that $\sup_{x>0}f(x) = \lim_{x\to\infty}f(x) = 0$. Finally, suppose $0 < \beta < \alpha$.
Note that $f'(x)>0$ for all $x$ small enough. Similarly, $f'(x) < 0$ for all $x$ large enough. We deduce that $\sup_{x>0} f(x)$ is achieved. Computing the first-order-necessary conditions, we see that $f(x)$ is maximized at $\tfrac{\sqrt{\beta}}{\sqrt{\alpha} - \sqrt{\beta}}$ with value $\left(\sqrt{\alpha} - \sqrt{\beta}\right)^2$.
\end{proof}

\begin{proof}[Proof of \cref{lem:partition_sup_calculation}]
Without loss of generality, $i = n$.
Let $b \in\R^n_-$ where $b_j = x_j^2-1$. Let $\bar\gamma$ denote the first $n-1$ entries of $\gamma$. Then,
\begin{align*}
\sup_{\gamma\in\cN_n} [\gamma,q(x)]  - q_\obj(x) = \sup_{\gamma\in\cN_n}\ip{\gamma,b}
&= \sup_{\bar\gamma\in\R^{n-1}_{++}} \sum_{i=1}^{n-1}\gamma_ib_i - \frac{a_n^2b_n}{1 + \sum_{i=1}^{n-1} a_i^2/\gamma_i}\\
&= \sup_{\alpha>0}\left(
- \frac{a_n^2b_n}{1 + \alpha}
+
\sup_{\zeta\in\R^{n-1}_{++}}\set{\sum_{i=1}^{n-1} \frac{a_i^2b_i}{\zeta_i}:\, \sum_{i=1}^{n-1} \zeta_i = \alpha}
\right)\\
&= \sup_{\alpha>0}\left(
\frac{\left(a_n\sqrt{-b_n}\right)^2}{1 + \alpha}
- \frac{1}{\alpha}\left(\sum_{i=1}^{n-1} a_i\sqrt{-b_i}\right)^2
\right)\\
&= \left(a_n\sqrt{- b_n} - \sum_{i=1}^{n-1} a_i\sqrt{-b_i}\right)_+^2.
\end{align*}
Here, the second line follows from a change of variables of $\zeta_i \coloneqq a_i^2/\gamma_i$ and $\alpha \coloneqq\sum_{i=1}^{n-1}\zeta_i$. The third line follows from \cref{lem:partition_useful} and the fourth line follows from \cref{lem:optimizing_over_alpha}.
\end{proof}

\begin{proof}
[Proof of \cref{cor:partition_sdp=0}]
Let $\gamma\in\Gamma_P$ and $x\in\R^n$. By convexity of $[\gamma,q(x)]$ in $x$ and the fact that $q(x) = q(-x)$, we deduce that $[\gamma,q(0)]\leq [\gamma,q(x)]$.  We deduce that $\Opt_\SDP=\inf_x\sup_{\gamma\in\Gamma_p}[\gamma,q(x)] = \sup_{\gamma\in\Gamma_P}[\gamma,q(0)]$. By \cref{lem:partition_explicit}, we conclude that
\begin{align*}
\Opt_\SDP  &= \max_{i\in[n]}\left(a_i - \sum_{j\neq i} a_j\right)^2_+.\qedhere
\end{align*}
\end{proof}

\begin{proof}
[Proof of \cref{thm:partition_conv_neq}]
Pick an open set $U\subseteq[\pm1]^n$ such that
\begin{align*}
a_1(1-x_1^2) > \sum_{j>1} a_j(1 - x_j^2),\,\forall x\in U.
\end{align*}
Then by \cref{lem:partition_explicit}, for any $x\in U$, we have $(x,t)\in\cS_\SDP$ if and only if
\begin{align*}
2t \geq f(x) \coloneqq (a^\top x)^2 + \left(a_1\sqrt{1-x_1^2} - \sum_{j>1} a_j \sqrt{1 -x_j^2}\right)^2.
\end{align*}
Note that $f(x)$ is smooth on $U$ and nonlinear (for example note $\frac{\partial^2 f(x)}{\partial x^2}\neq 0$). We conclude that $\cS_\SDP \neq \conv(\cS)$ as $\conv(\cS)$ is polyhedral.
\end{proof} \section{Deferred proofs from \cref{sec:obj_appl}}
\label{sec:deferred_proofs_obj_appl}

\subsection{Useful lemmas}
We first recall that under some minor conditions, pointwise convergence implies uniform convergence for convex functions. We extend this statement to show that pointwise \aas{} convergence implies \aas{} uniform convergence.
\begin{lemma}
\label{lem:convex_pw_implies_uniform}
Let $\Omega\subseteq \R^n$ be an open set and let $f:\Omega\to\R$ be a convex function. Suppose $g_1,g_2,\dots:\Omega\to\R$ is a sequence of random convex functions such that for all $x\in\Omega$ and $\epsilon>0$, we have that \aas,
\begin{align*}
\abs{g_i(x) - f(x)}\leq \epsilon.
\end{align*}
Then, for any compact $C\subseteq \Omega$ and $\epsilon>0$, we have that \aas,
\begin{align*}
\abs{g_i(x) - f(x)}\leq \epsilon,~\forall x\in C.
\end{align*}
\end{lemma}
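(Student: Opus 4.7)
The plan is to reduce the claim to the classical deterministic fact that a sequence of convex functions converging pointwise on an open set automatically converges uniformly on compact subsets; the randomness will enter only through a finite union bound over a net. First, I would fix $\epsilon > 0$ and compact $C \subseteq \Omega$, choose $\delta > 0$ small enough that $C + B(0,2\delta) \subseteq \Omega$, and pick a finite $\eta$-net $\{x_1, \dots, x_N\}$ of $C + B(0,\delta)$ for a tolerance $\eta \in (0,1)$ to be pinned down at the end.

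Next, applying the pointwise a.a.s.~hypothesis at each of the finitely many $x_k$ with tolerance $\eta$ and union-bounding over $k \in [N]$, one obtains that a.a.s.~$|g_i(x_k) - f(x_k)| \leq \eta$ for every $k$; in particular $|g_i(x_k)| \leq M$ for all $k$, where $M := 1 + \max_{y \in C + B(0,2\delta)} |f(y)|$, which is finite by continuity of $f$. The deterministic heart of the argument then upgrades this net-wise bound to a uniform Lipschitz bound on $C$: the standard quantitative estimate is that any convex function bounded by some constant $M'$ on a ball of radius $\delta/2$ is automatically $O(M'/\delta)$-Lipschitz on the concentric ball of radius $\delta/4$. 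Sandwiching arbitrary points of $C + B(0,\delta/2)$ between convex combinations of net points (which requires the net to be fine enough relative to $\delta$) yields a uniform pointwise bound $|g_i| \leq M'$ on the full neighborhood $C + B(0,\delta/2)$, and hence a uniform Lipschitz constant $L = L(M,\delta)$ for $g_i$ on $C$, independent of $i$. Combining with a deterministic Lipschitz constant $L'$ for $f$ on $C$, and applying the triangle inequality at the closest net point $x_k$ to $y$, one gets
\[
|g_i(y) - f(y)| \leq |g_i(y) - g_i(x_k)| + |g_i(x_k) - f(x_k)| + |f(x_k) - f(y)| \leq (L + 1 + L')\eta,
\]
which is at most $\epsilon$ as soon as $\eta$ is chosen to be at most $\epsilon/(L + L' + 1)$ from the outset.

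The delicate point is the deterministic Lipschitz upgrade in the second paragraph: the parameters $\delta$, the net mesh, and $\eta$ must be interleaved in the correct order so that a.a.s.~control at only the finitely many net points forces, deterministically, first a uniform $L^\infty$ bound on an open neighborhood of $C$, and then via the quantitative convex Lipschitz estimate a uniform Lipschitz bound on $C$ itself. Once this chain is in place, the triangle inequality step combines with the finite union bound to give the conclusion; everything else is bookkeeping.
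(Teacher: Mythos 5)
Your proof is correct, and it shares the paper's skeleton---reduce to a finite net via a union bound over finitely many points, then use convexity to propagate the estimate from the net to all of $C$---but the deterministic propagation step is executed differently. The paper never establishes a Lipschitz constant for the $g_i$: it bounds $g_i(x)-f(x)$ from above by writing $x$ as a convex combination of nearby net points, and from below by reflecting $x$ through a nearby net point $x'$ (using $g_i(x)\geq 2g_i(x') - g_i(2x'-x)$ together with the upper bound at the reflected point $2x'-x$), which yields $\abs{g_i - f}\leq 8\epsilon$ on $C$ in a single pass with the net mesh equal to the target tolerance. You instead route through the classical quantitative estimate that a convex function bounded in absolute value on a ball is Lipschitz on a smaller concentric ball, obtain a uniform Lipschitz constant $L$ for all the $g_i$ on the conditioned event, and finish with a triangle inequality at the nearest net point; this forces the two-stage parameter choice you correctly flag ($\delta$ and hence $L=L(M,\delta)$ first, then the mesh $\eta\leq \epsilon/(L+L'+1)$), but it is the standard textbook route---essentially the proof that pointwise convergence of convex functions implies locally uniform convergence---and is arguably more modular, since the randomness enters only through the finite union bound. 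Two small points you should spell out when writing this up: the lower half of your uniform bound $\abs{g_i}\leq M'$ on the neighborhood is not given by convex combinations alone (which only yield $g_i\leq M$) but requires the same reflection trick, combining the upper bound on a ball with the lower bound $g_i(x_k)\geq f(x_k)-\eta$ at a single net point; and the net must be chosen so that every point of the inner neighborhood lies in the convex hull of net points within distance comparable to the mesh (e.g., a fine grid), exactly as the paper's choice of $\cN$ makes explicit. Neither is a gap; both are routine.
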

\begin{proof}
Fix $C\subseteq\Omega$ compact. Without loss of generality, we will assume that $\epsilon>0$ satisfies $C+B(0,3\epsilon)\subseteq \Omega$ and that $f$ is $1$-Lipschitz on $C+B(0,3\epsilon)$.

Fix a finite net $\cN\subseteq C+B(0,3\epsilon)$ such that for all $x\in C + B(0,2\epsilon)$, we have $x\in\conv\left(\cN \cap B(x,\epsilon)\right)$. By our assumption and the fact that $\cN$ is finite, we have that \aas, $\abs{f(x) - g_i(x)}\leq \epsilon$ for all $x\in\cN$. We condition on this event in the remainder of the proof.

For any $x\in C$, let $x = \sum_j \lambda_jx_j$ denote the convex decomposition guaranteed by $x\in\conv\left(\cN \cap B(x,\epsilon)\right)$. Then,
\begin{align*}
g_i(x) \leq \sum_j \lambda_j g_i(x_j) \leq \sum_j\lambda_j \left(f(x_j) + \epsilon\right) \leq f(x) + 2\epsilon.
\end{align*}
Here, the last inequality follows from $f(x_j)\leq f(x) + \norm{x - x_j}_2 \leq f(x) + \epsilon$.

Let $x\in C$ and $x' \in \cN \cap B(x,\epsilon)$. Note that $y \coloneqq x' + (x' - x) \in C + B(0,2\epsilon)$. By construction, there exists $y' \in\cN\cap B(y,\epsilon)$ such that $g_i(y')\geq g_i(y)$. Finally,
\begin{align*}
f(x) + 4\epsilon &\geq f(y') + \epsilon \geq g_i(y') \geq g_i(y) \geq 2g_i(x') - g_i(x) \geq 2(f(x') - \epsilon) - g_i(x) \geq 2 f(x)-g_i(x) - 4\epsilon.
\end{align*}
Therefore, by rearranging and combining, we deduce that \aas, $\abs{g_i(x) - f(x)}\leq 8\epsilon,\,\forall x\in C$.
\end{proof}

\begin{lemma}
\label{lem:int_is_semicircle}
Let $r\in[-1,1]$, then
\begin{align*}
-\int_{\sigma = -1}^1 \frac{r^2}{1 + r \sigma}\,d\mu_{\textup{nsc}}(\sigma) =2(\sqrt{1 - r^2} - 1)= \phi(r).
\end{align*}
\end{lemma}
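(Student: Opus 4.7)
The plan is to evaluate the integral in closed form so that the claim becomes a direct computation. Writing out the density of $\mu_{\textup{nsc}}$, the left-hand side equals $-\frac{2r^2}{\pi}\,J(r)$ where
\[
J(r) \coloneqq \int_{-1}^{1} \frac{\sqrt{1-\sigma^2}}{1+r\sigma}\,d\sigma.
\]
The cases $r=0$ and $r=\pm1$ will be handled separately (by inspection and by continuity, respectively), and the change of variables $\sigma\mapsto-\sigma$ shows $J(-r)=J(r)$, so I will focus on $r \in (0,1)$.

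The substitution $\sigma=\cos\theta$ turns $J(r)$ into $\int_0^{\pi} \frac{\sin^2\theta}{1+r\cos\theta}\,d\theta$. The key algebraic observation is the factoring
\[
r^2\sin^2\theta = (1-r\cos\theta)(1+r\cos\theta) - (1-r^2),
\]
which splits the integrand as
\[
\frac{r^2\sin^2\theta}{1+r\cos\theta} = (1-r\cos\theta) - \frac{1-r^2}{1+r\cos\theta}.
\]
Integrating term by term over $[0,\pi]$, the first piece contributes $\pi$ (the $r\cos\theta$ term integrates to zero), and the second piece uses the textbook identity $\int_0^\pi \frac{d\theta}{1+r\cos\theta} = \frac{\pi}{\sqrt{1-r^2}}$, provable in one line via the Weierstrass substitution $t=\tan(\theta/2)$. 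Combining yields $r^2 J(r) = \pi\bigl(1 - \sqrt{1-r^2}\bigr)$, whence
\[
-\frac{2r^2}{\pi}\,J(r) = -2\bigl(1-\sqrt{1-r^2}\bigr) = \phi(r),
\]
as required.

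The boundary case $|r|=1$ follows either by continuity from $(0,1)$ or by substituting $\sigma=\cos\theta$ directly into $J(1)$, which collapses the integrand to $2\sin^2(\theta/2)$ and gives $J(1) = \pi$; this matches $\phi(1) = -2 = 2(\sqrt{0}-1)$. I do not foresee any genuine obstacle: the only nontrivial ingredient beyond algebraic bookkeeping is the classical Weierstrass value of $\int_0^\pi d\theta/(1+r\cos\theta)$, which is standard. In spirit this identity is just a value of the Stieltjes transform of the semicircle law, so one could alternatively package the whole argument as a contour-integral computation around the branch cut $[-1,1]$ of $\sqrt{z^2-1}$, but the trigonometric route above keeps the proof fully self-contained.
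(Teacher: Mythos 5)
Your proof is correct and takes essentially the same route as the paper's: the substitution $\sigma=\pm\cos\theta$, the identical algebraic splitting of $r^2\sin^2\theta/(1\pm r\cos\theta)$ into a polynomial part plus a multiple of $1/(1\pm r\cos\theta)$, and the classical half-angle (Weierstrass) evaluation of $\int_0^\pi d\theta/(1\pm r\cos\theta)=\pi/\sqrt{1-r^2}$. The only cosmetic difference is that you handle the cases $r=0$ and $\lvert r\rvert=1$ explicitly, which the paper leaves implicit.
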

\begin{proof}
We begin by expanding the definition of $\mu_{\textup{nsc}}$ and substituting $\sigma = -\cos \theta$:
\begin{align}
-\int_{\sigma = -1}^1 \frac{r^2}{1 + r \sigma} \,d\mu_{\textup{nsc}}(\sigma) &= -\frac{2}{\pi}\int_{\sigma = -1}^1 \frac{r^2 \sqrt{1 - \sigma^2}}{1 + r\sigma}\,d\sigma \nonumber\\
&= -\frac{2}{\pi}\int_{\theta= 0}^\pi \frac{r^2\sin^2\theta}{1 - r\cos\theta}\,d\theta \nonumber\\
&= -\frac{2}{\pi}\int_{\theta= 0}^\pi \frac{r^2 - r^2\cos^2\theta}{1 - r\cos\theta}\,d\theta \nonumber\\
&= -\frac{2}{\pi}\int_{\theta= 0}^\pi \frac{r^2 -1}{1 - r\cos\theta}\,d\theta -\frac{2}{\pi}\int_{\theta= 0}^\pi (1 + r\cos\theta) \,d\theta \nonumber\\
&= \frac{2(1 - r^2)}{\pi}\left(\int_{\theta = 0}^\pi \frac{1}{1 - r\cos\theta}\,d\theta\right) - 2. \label{eq:lem_circ1}
\end{align}
We now focus on the bracketed integral. Perform the change of variables $\theta = 2\eta$ to get
\begin{align}
\label{eq:lem_circ2}
\int_{\theta = 0}^\pi \frac{1}{1 - r\cos\theta}\,d\theta &= 2\int_{\eta = 0}^{\pi/2} \frac{1}{1 - r\cos(2\eta)}\,d\eta.
\end{align}
Recalling the identities $\cos(2\eta)=2\cos^2(\eta)-1$ and $\cos^{-2}\eta= \sec^2\eta = \tan^2\eta + 1 = \tfrac{d}{d\eta}\tan(\eta)$, we then have
\begin{align*}
\frac{1}{1 - r \cos(2\eta)} = \frac{1}{1 +r -2r \cos^2\eta} = \frac{\tfrac{d}{d\eta}\tan\eta}{(1+r)\tan^2 \eta + (1 - r)}.
\end{align*}
Performing one last change of variables $t = \tan\eta$ gives
\begin{align}
2\int_{\eta = 0}^{\pi/2} \frac{1}{1 - r\cos(2\eta)}\,d\eta 
&= 2\int_{\eta = 0}^{\pi/2} \frac{\tfrac{d}{d\eta}\tan\eta}{(1+r)\tan^2 \eta + (1 - r)}\,d\eta \nonumber\\
&= 2\int_{t = 0}^\infty \frac{1}{(1+r)t^2 + (1-r)}\,dt \nonumber\\
&= 2 \left.\frac{\arctan\left(t\sqrt{\tfrac{1+r}{1-r}}\right)}{\sqrt{1-r^2}}\right|_{t = 0}^\infty \nonumber\\
&= \frac{\pi}{\sqrt{1-r^2}}. \label{eq:lem_circ3}
\end{align}
Combining \eqref{eq:lem_circ1}, \eqref{eq:lem_circ2}, and \eqref{eq:lem_circ3} gives the desired identity.
\end{proof}

\subsection{Deferred proofs from \cref{subsec:fully_gaussian}}
\begin{proof}
[Proof of \cref{lem:fg_lambda_min_concentration}]
Let $\Omega=\R^m$ and set $f(\gamma) \coloneqq 1 - \norm{\gamma}_2$. Note that $f$ and $\lambda_{\min}\left(A[\gamma]\right)$ are both concave functions on $\Omega$.
We have $\lambda_{\min}(A[0]) = 1 = f(0)$. 
Furthermore, for any nonzero $\gamma\in\R^m$ and $\epsilon>0$, 
\begin{align*}
\lambda_{\min}\left(A[\gamma]\right) &= 1 + \norm{\gamma}_2\lambda_{\min}\left(\sum_{i=1}^m \frac{\gamma_i}{\norm{\gamma}_2} A_i\right) \in 1 + \norm{\gamma}_2[-1\pm\epsilon] =  [f(\gamma) \pm \norm{\gamma}_2\epsilon],\,\aas.
\end{align*}
Here, the inclusion holds by \cref{fact:goe_reweighting_invariant,fact:goe_bounded_norm}.
Taking $C = r \bS^{m-1}$ and applying \cref{lem:convex_pw_implies_uniform}.
\end{proof}

\begin{proof}
[Proof of \cref{lem:fg_sphere_cap_concentration}]
Fix $r\in(0,1)$. Without loss of generality, $r+2\epsilon <1$.
Set $\Omega \coloneqq \set{\gamma\in\R^m:\, \norm{\gamma}_2< r + 2\epsilon}$.
Let $\hat\gamma\in\Omega$.
Note that we may generate $A[\hat\gamma]$ and $b[\hat\gamma]$ via the following equivalent process: Sample $\bar A \sim \ngoe(n)$ and $\bar b \sim N(0,I_n/n)$ independently and set $A[\hat\gamma] \coloneqq I + r \bar A$ and $b[\hat\gamma]\coloneqq r\bar b$. With this notation, $-b[\hat\gamma]A[\hat\gamma]^{-1}b[\hat\gamma] = -r^2 \bar b^\top (I + r \bar A)^{-1} \bar b$. 
Let $\bar A = \sum_{i=1}^n \sigma_i v_iv_i^\top$ be the eigenvalue decomposition of $\bar A$ and let $\mu_{\bar A}$ denote its Empirical Spectral Distribution. By \cref{lem:int_is_semicircle}, we have
\begin{align*}
&\frac{1}{r^2}\abs{-b\left[\hat\gamma\right]^\top A\left[\hat\gamma\right]^{-1}b\left[\hat\gamma\right] - \phi(r)}\\
&\qquad =\frac{1}{r^2}\abs{-b\left[\hat\gamma\right]^\top A\left[\hat\gamma\right]^{-1}b\left[\hat\gamma\right] + \int_{\sigma = -1}^1 \frac{r^2}{1+r\sigma}d\mu_{\textup{nsc}}}\\
&\qquad = 
\abs{\bar b^\top (I + r\bar A)^{-1} \bar b - \int_{\sigma = -1}^1 \frac{1}{1+r\sigma}d\mu_{\textup{nsc}}}\\
&\qquad \leq
\abs{\sum_{i=1}^n\frac{\left(v_i^\top \bar b\right)^2 - 1/n}{1+r\sigma_i}} +
\abs{\int \frac{1}{1+r\sigma} d\mu_{\bar A}(\sigma) - \int\frac{1}{1+r\sigma}d\mu_{\textup{nsc}}(\sigma)},
\end{align*}
where the last inequality follows from the identity $(I+r\bar A)^{-1}= \sum_{i=1}^n {1\over 1+r \sigma_i} v_iv_i^\top$ and Cauchy-Schwartz inequality. 
Note that by \cref{fact:goe_bounded_norm}, for all $i\in[n]$ we have that $1+r\sigma_i \geq 1 - r - r\epsilon \geq 1 - r - \epsilon > \epsilon$ \aas.
We will compute the mean and variance of the first term conditioned on this event.
By independence of $\bar b$ and $\bar A$,
\begin{align*}
\E_{\bar b}\left[\sum_{i=1}^n\frac{\left(v_i^\top \bar b\right)^2 - 1/n}{1+r\sigma_i}\,\middle |\, 1+r\sigma_i \geq \epsilon,\,\forall i \right] &=
\sum_{i=1}^n\left(\frac{1}{1+r\sigma_i}\right)\E_{\bar b}\left[\left(v_i^\top \bar b\right)^2 - \frac{1}{n}\,\middle |\, 1+r\sigma_i \geq \epsilon,\,\forall i\right] = 0,\text{ and}\\
\E_{\bar b}\left[\left(\sum_{i=1}^n\frac{\left(v_i^\top \bar b\right)^2 - 1/n}{1+r\sigma_i}\right)^2\,\middle |\, 1+r\sigma_i \geq \epsilon,\,\forall i\right] &\leq \left(\frac{1}{\epsilon}\right)
\E_{\bar b}\left[\left(\sum_{i=1}^n\left(v_i^\top \bar b\right)^2 - 1/n\right)^2\,\middle |\, 1+r\sigma_i \geq \epsilon,\,\forall i\right] = \frac{2}{\epsilon n}.
\end{align*}
In particular, the first term can be bounded by $\epsilon/(2r^2)$ \aas.

For the second term, define the $C_c^\infty$ function
\begin{align*}
\psi(x) \coloneqq \begin{cases}
	\tfrac{1}{1+r x}, &\text{if } \abs{x}\leq 1+\delta\\
	0, & \text{if } \abs{x} \geq 1+2\delta\\
	C_c^\infty, & \text{else}.
\end{cases}
\end{align*}
By \cref{fact:goe_bounded_norm}, we have that \aas{} $\int \frac{1}{1+r\sigma}d\mu_{\bar A}(\sigma) = \int \psi(\sigma) d\mu_{\bar A}(\sigma)$. Applying \cref{fact:goe_semicircle}, we conclude that the second term can be bounded by $\epsilon/(2r^2)$ \aas.

Combining the two bounds shows that for any $\gamma\in\Omega$ and $\epsilon>0$, $\abs{-b[\gamma]^\top A[\gamma]^{-1}b[\gamma] - \phi(\gamma)}\leq \epsilon$ \aas. Applying \cref{lem:convex_pw_implies_uniform} with $C = r\bS^{m-1}$ concludes the proof.
\end{proof}

\subsection{Deferred proofs from \cref{subsec:semi_random}}
\begin{lemma}
\label{lem:W_single}
Fix $\epsilon>0$ and $N\in\N$. Let $A\sim\ngoe(n)$. Then, \aas{} there exists a $N$-dimensional vector space $W\subseteq\R^n$ such that
\begin{align*}
w^\top A w \in [1\pm \epsilon]\norm{w}^2,\,\forall w\in W.
\end{align*}
\end{lemma}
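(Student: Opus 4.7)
The plan is to produce $W$ as the span of $N$ eigenvectors of $A$ whose eigenvalues all lie in $[1-\epsilon,1+\epsilon]$. Once we have such eigenvectors $v_1,\dots,v_N$ with corresponding eigenvalues $\lambda_{i_1},\dots,\lambda_{i_N}\in[1-\epsilon,1+\epsilon]$ and set $W\coloneqq\spann(v_1,\dots,v_N)$, any $w=\sum_k \alpha_k v_k\in W$ satisfies
\begin{align*}
w^\top A w = \sum_k \alpha_k^2 \lambda_{i_k} \in [1\pm\epsilon]\sum_k\alpha_k^2 = [1\pm\epsilon]\norm{w}^2,
\end{align*}
since $w^\top A w/\norm{w}^2$ is a convex combination of the $\lambda_{i_k}$'s. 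So the only real content is to produce $N$ eigenvalues of $A$ in $[1-\epsilon,1+\epsilon]$ \aas.

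To do this, I would use the semicircle law (\cref{fact:goe_semicircle}) combined with the observation that $\mu_\nsc$ has strictly positive density on $(-1,1)$. Concretely, choose a bump function $\psi\in C_c^\infty(\R)$ with $\psi\geq 0$, $\supp(\psi)\subseteq[1-\min(\epsilon,1/2), 1-\min(\epsilon,1/2)/4]\subseteq(-1,1)\cap(1-\epsilon,1+\epsilon)$, and $\psi$ strictly positive on an open subinterval of its support. Since $\mu_\nsc$ has positive density on this range, $\int\psi\,d\mu_\nsc>0$. (The case $\epsilon\geq 2$ is trivial from \cref{fact:goe_bounded_norm}, so we may assume $\epsilon<2$.)

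By \cref{fact:goe_semicircle}, \aas{} we have $\int\psi\,d\mu_n\geq \tfrac{1}{2}\int\psi\,d\mu_\nsc$, and unpacking the definition of $\mu_n$ gives
\begin{align*}
\#\set{j:\lambda_j(A)\in\supp(\psi)} \geq \frac{1}{\norm{\psi}_\infty}\sum_{j=1}^n \psi(\lambda_j(A)) = \frac{n}{\norm{\psi}_\infty}\int\psi\,d\mu_n \geq \frac{n}{2\norm{\psi}_\infty}\int\psi\,d\mu_\nsc.
\end{align*}
For $n$ large enough this count exceeds $N$, so we may pick $N$ eigenvalues (and corresponding orthonormal eigenvectors) of $A$ lying in $\supp(\psi)\subseteq[1-\epsilon,1+\epsilon]$, completing the proof via the convex-combination argument above.

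There is no substantial obstacle here: the statement is essentially a quantitative consequence of the fact that the NGOE spectrum accumulates mass at a positive rate near any interior point of $[-1,1]$. The only technical care needed is to choose $\psi$ strictly inside $(-1,1)$ so that the semicircle law applies as stated (for test functions in $C_c^\infty(\R)$), while also keeping $\supp(\psi)\subseteq[1-\epsilon,1+\epsilon]$.
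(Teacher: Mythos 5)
Your proof is correct and follows essentially the same route as the paper's: both lower-bound the number of eigenvalues of $A$ lying in a subinterval of $[1\pm\epsilon]$ by integrating a smooth bump function against the empirical spectral distribution and invoking the semicircle law (\cref{fact:goe_semicircle}), then take $W$ to be the span of the corresponding eigenvectors. The only cosmetic difference is that the paper uses a $[0,1]$-valued $\psi$ equal to $1$ on $[1\pm\epsilon/2]$ and supported in $[1\pm\epsilon]$, rather than normalizing the count by $\norm{\psi}_\infty$ as you do.
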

\begin{proof}
Let $\psi$ denote a $C_c^\infty$ function from $\R$ to $[0,1]$ that takes the value one on $[1\pm \epsilon/2]$ and the value zero outside of $[1\pm \epsilon]$.
Note that $\theta \coloneqq \int \psi d\mu_\nsc$ is some positive constant independent of $n$.
Let $W$ denote the vector space corresponding to the eigenvalues of $A$ in the range $[1\pm \epsilon]$. Clearly $w^\top A w \in[1\pm\epsilon]\norm{w}_2^2$ for all $w\in W$. It remains to note that by \cref{fact:goe_semicircle}, we have \aas
\begin{align*}
\frac{\dim(W)}{n} = \frac{\abs{\set{i\in[n]:\, \lambda_i(A)\in[1\pm \epsilon] }}}{n} &\geq \int \psi d\mu_{\bar A} \geq \int \psi d\mu_{\nsc} - \theta/2 = \theta/2
\end{align*}
so that $\dim(W) \geq N$ \aas.
\end{proof}

\begin{proof}
[Proof of \cref{lem:semi_random_W}]
Let $\cN$ denote a finite $\epsilon$-net on $\bS^m\subseteq\R^{1+m}$. By \cref{lem:W_single}, \aas, for every $(\gamma_\obj,\gamma)\in\cN$, there exists an $N$ dimensional subspace $W$ such that
\begin{align*}
w^\top A(\gamma_\obj, \gamma, 1) w \in [\pm\epsilon]\norm{w}^2,\,\forall w\in W.
\end{align*}
Furthermore, by \cref{lem:fg_lambda_min_concentration}, we have that \aas{} $\norm{A(\gamma_\obj,\gamma,0)}_2 \in \norm{(\gamma_\obj,\gamma)}[1\pm \epsilon]$ for all $(\gamma_\obj,\gamma)\in\R^m$. 
We condition on these two events.

Now, let $(\gamma_\obj,\gamma)\in\bS^{m}$ and let $(\gamma_\obj',\gamma')\in\cN \cap B((\gamma_\obj,\gamma),\epsilon)$. Let $W$ denote the $N$-dimensional subspace guaranteed for $(\gamma_\obj',\gamma')$.
Then for all $w\in W$,
\begin{align*}
w^\top A(\gamma_\obj,\gamma,1) w &= w^\top A(\gamma'_\obj,\gamma',1) w + w^\top A(\gamma_\obj - \gamma'_\obj, \gamma - \gamma')w \in [\pm 3\epsilon]\norm{w}^2.\qedhere
\end{align*}
\end{proof}

 \end{appendix}

\end{document}